\numberwithin{equation}{section}
\newtheorem{thm}{Theorem}[section]
\newtheorem{prop}[thm]{Proposition}
\newtheorem{lem}[thm]{Lemma}
\newtheorem{cor}[thm]{Corollary}
\theoremstyle{remark}
\newtheorem{rem}{Remark}[section]
\newtheorem{defn}{Definition}
\newcommand{\laplacian}{\Delta}
\newcommand{\BBB}{\mathbb}
\newcommand{\R}{{\BBB R}}
\newcommand{\Z}{{\BBB Z}}
\newcommand{\T}{{\BBB T}}
\newcommand{\C}{{\BBB C}}
\newcommand{\LR}[1]{{\langle {#1} \rangle }}
\newcommand{\lec}{{\ \lesssim \ }}
\newcommand{\gec}{{\ \gtrsim \ }}
\newcommand{\cross}{\times}
\newcommand{\vp}{\varphi}
\newcommand{\e}{\varepsilon}
\newcommand{\ta}{\tau}
\newcommand{\x}{\xi}
\newcommand{\p}{\partial}
\newcommand{\la}{\lambda}
\newcommand{\de}{\delta}
\newcommand{\om}{\omega}
\newcommand{\na}{\nabla}
\newcommand{\supp}{\operatorname{supp}}
\newcommand{\I}{\infty}
\newcommand{\EQ}[1]{\begin{equation} \begin{split} #1
 \end{split} \end{equation}}
\newcommand{\EQS}[1]{\begin{align} #1 \end{align}}
\newcommand{\EQQS}[1]{\begin{align*} #1 \end{align*}}
\newcommand{\EQQ}[1]{\begin{equation*} \begin{split} #1
 \end{split} \end{equation*}}
\newcommand{\ol}{\overline}
\newcommand{\ds}{\displaystyle}
\newcommand{\sub}{\subset}
\newcommand{\F}{\mathcal{F}}
\newcommand{\1}{{\mathbf 1}}
\newcommand{\ti}{\widetilde}
\newcommand{\ha}{\widehat}
\newcommand{\til}{\tilde}
\title[Scattering for the Zakharov system]{
}
\author[I.  Kato]{
}
\author[K. Tsugawa]{
}
\email[K. Tsugawa]{tsugawa@math.nagoya-u.ac.jp}
\email[Isao Kato]{kato.isao@f.mbox.nagoya-u.ac.jp}
\subjclass[2010]{35Q55, 35B40, 35A01, 35A02}
\keywords{Zakharov system, scattering, well-posedness, Cauchy problem, low regularity, bilinear estimate, Strichartz estimate, $U^2, V^2$ type spaces}
\begin{document}

\begin{center}
{\bf SCATTERING AND WELL-POSEDNESS\\
FOR THE ZAKHAROV SYSTEM AT A CRITICAL SPACE\\
IN FOUR AND MORE SPATIAL DIMENSIONS}

\bigskip \bigskip {\sc Isao Kato}

\smallskip {\small Graduate School of Mathematics, Nagoya University, 
Chikusa-ku, Nagoya, 464-8602, Japan}

\bigskip {\sc Kotaro Tsugawa}

\smallskip {\small Graduate School of Mathematics, Nagoya University
Chikusa-ku, Nagoya, 464-8602, Japan}
\end{center}

\begin{abstract}
We study the Cauchy problem for the Zakharov system in spatial dimension $d\ge 4$ with initial datum 
$\bigl(u(0), n(0), \p_t n(0)\bigr)\in H^k(\R^d)\times \dot{H}^l(\R^d)\times \dot{H}^{l-1}(\R^d)$.  
According to Ginibre, Tsutsumi and Velo (\!\!~\cite{GTV}), 
the critical exponent of $(k,l)$ is $\bigl((d-3)/2,(d-4)/2\bigr).\ $
We prove the small data global well-posedness and the scattering at the critical space.
It seems difficult to get the crucial bilinear estimate only by applying the $U^2,\ V^2$ type spaces introduced by Koch and Tataru (\!\!~\cite{KT1}, \cite{KT2}).
To avoid the difficulty, we use an intersection space of $V^2$ type space and the space-time Lebesgue space $E:=L^2_tL_x^{2d/(d-2)}$, which is related to the endpoint Strichartz estimate.
\end{abstract}
\maketitle


\section{Introduction}
We consider the Cauchy problem for the Zakharov system:
\EQS{
 \begin{cases}
  i\p_t u + \laplacian u = nu, \ \ \ \ \ \ \ \ \ \ \ \ \ \ \ \ \  t \in \R,\ x \in \R^d,  \\
  \p_t^2 n - \laplacian n = \laplacian |u|^2, \ \ \ \ \ \ \ \ \ \ \ \ \ \ t \in \R,\ x \in \R^d, \\
  \bigl(u(0),n(0),\p_t n(0)\bigr) = (u_0,n_0,n_1)\in H^k(\R^d)\times \dot{H}^l(\R^d)\times \dot{H}^{l-1}(\R^d),      \label{d-Z}
 \end{cases}
}
where $u=u(t, x)$ is complex valued, the slowly varying envelope of electric field and 
$n=n(t, x)$ is real valued, the deviation of ion density from its mean background density.
\eqref{d-Z} describes the Langmuir turbulence in a plasma.
We consider well-posedness for \eqref{d-Z} in spatial dimension $d\ge 4.$  
\eqref{d-Z} does not have scaling invariant transformation because of the difference of dilation transformations for the linear wave equation and the  Schr\"odinger equation.
However,  in \cite{GTV}, Ginibre, Tsutsumi and Velo
introduced a critical exponent for \eqref{d-Z} which corresponds to 
the scaling criticality in the following sense.
We transform $n$ into $n_{\pm}$ as $n_{\pm}:=n\pm i\om^{-1}\p_t n,\ \om := \sqrt{-\laplacian }$.  
Then \eqref{d-Z} is rewritten into
\EQS{\label{redn}
 \begin{cases}
  i\p_t u + \laplacian u = u (n_++ n_-)/2, \ \ \ \ \ \ \ \ \ \ \ \ \ \ \ \ \  t \in \R,\ x \in \R^d,  \\
   (i\p_t \mp \om) n_{\pm} = \pm \om |u|^2, \ \ \ \ \ \ \ \ \ \ \ \ \ \ \ \ \ \ \ \ \ \ \ \ t \in \R,\ x \in \R^d, \\
  \bigl(u(0),n_{+}(0),n_{-}(0)\bigr) = (u_0,n_{+0},n_{-0}).
 \end{cases}
}
In the second equation of \eqref{redn}, if we disregard the second term of the left-hand side,
then \eqref{redn} is invariant under the dilation
\EQQ{
u\to u_\la=\la^{3/2}u(\la x, \la^2 t),\ \ \ \ n\to n_{\pm \la}=\la^2n_{\pm}(\la x,\la^2 t), 
}
and the the scaling critical exponent is $(k, l)=\bigl((d-3)/2, (d-4)/2\bigr)$.
Our main result is the small data global well-posedness and the scattering for \eqref{redn} at the critical exponent in spatial dimension $d\ge 4$. 
\begin{thm} \label{SDGW}
Let $d \ge 4, k=(d-3)/2, l=(d-4)/2$. 
Then \eqref{redn} is globally well-posed for small data in $H^k(\R^d) \cross \dot{H}^l(\R^d) \cross \dot{H}^{l}(\R^d)$  (resp. $H^k(\R^d) \cross {H}^l(\R^d) \cross {H}^{l}(\R^d)$). Moreover, the solution scatters
 in this space.
\end{thm}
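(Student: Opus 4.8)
\emph{Reduction to bilinear estimates.} I would prove Theorem~\ref{SDGW} by a contraction-mapping argument for the coupled Duhamel system
\EQQ{
u(t) &= e^{it\laplacian}u_0 - i\int_0^t e^{i(t-s)\laplacian}\bigl(u(n_++n_-)/2\bigr)(s)\,ds,\\
n_\pm(t) &= e^{\mp it\om}n_{\pm0}\mp i\int_0^t e^{\mp i(t-s)\om}\bigl(\om\A{u}^2\bigr)(s)\,ds
}
in a small ball of $\XS{k}\cross\XW{l}\cross\XW{l}$, where $\XS{k}$ is the Koch--Tataru function space for the Schr\"odinger flow built on $U^2_{\laplacian},V^2_{\laplacian}$ at regularity $H^k$, and $\XW{l}$ is the intersection of the corresponding $V^2$-type space for the half-wave flow $e^{\mp it\om}$ at regularity $\dot H^l$ with a derivative-shifted copy of $E=L^2_tL_x^{2d/(d-2)}$, the shift being chosen so that $\|e^{\mp it\om}n_{\pm0}\|_{E\text{-part}}\lesssim\|n_{\pm0}\|_{\dot H^l}$; this last bound is exactly the endpoint Strichartz estimate for the wave equation and holds for $d\ge3$. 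Granting the standard linear estimates, the transference principle, and the duality (energy) estimates for these spaces, the whole theorem reduces to the two bilinear estimates
\EQS{
\|u\,n_\pm\|_{N_S^{k}}&\lesssim\|u\|_{\XS{k}}\|n_\pm\|_{\XW{l}},\label{eq:bS}\\
\|\om(u_1\ol{u_2})\|_{N_{W_{\pm}}^{l}}&\lesssim\|u_1\|_{\XS{k}}\|u_2\|_{\XS{k}},\label{eq:bW}
}
where $N_S^{k}$ and $N_{W_{\pm}}^{l}$ denote the $N$-type spaces into which the corresponding Duhamel operators map boundedly, together with the matching Lipschitz bounds, which follow from \eqref{eq:bS}--\eqref{eq:bW} by bilinearity. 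The inhomogeneous ($H^l$) version only additionally requires carrying along the low-frequency $L^2$ part of $n_\pm$, for which every estimate is strictly easier.

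\emph{Dyadic analysis and the nonresonant contribution.} After a Littlewood--Paley decomposition, \eqref{eq:bS} and \eqref{eq:bW} split into dyadic blocks indexed by the wave frequency $N_1$, the Schr\"odinger frequency $N_2$, and the output frequency $N$, and one must obtain for each block a bound with a gain that is summable over the three off-diagonal regimes (high$\times$low$\to$high, high$\times$high$\to$low, low$\times$high$\to$high). The controlling quantity is the resonance function: for a Schr\"odinger wave at $\x_2$ interacting with a half-wave at $\x_1$ to produce a Schr\"odinger wave at $\x=\x_1+\x_2$ it equals $\Om=\A{\x_1}^2+2\x_1\cdot\x_2\pm\A{\x_1}$, and the same algebra governs \eqref{eq:bW}. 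Each block is split according to whether the largest of the three modulations $\A{\ta+\A{\x}^2}$, $\A{\ta_2+\A{\x_2}^2}$, $\A{\ta_1\mp\A{\x_1}}$ dominates $\A{\Om}$ or not. In the nonresonant case (largest modulation $\gtrsim\A{\Om}$) the modulation weight can be spent to recover the missing powers of frequency, and the block then closes by the bilinear $L^2$ and $L^4$ Strichartz estimates available in $U^2/V^2$ spaces, i.e. by $L^2$-almost orthogonality in $\ta$ together with transversality and size gains; this part runs parallel to the classical $X^{s,b}$ treatment of the Zakharov system, pushed to the scaling-critical exponent.

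\emph{The resonant region --- the main obstacle.} The genuine difficulty is the region in which all three modulations are small and $\Om\approx0$, which forces $(\x_1,\x_2)$ onto the hypersurface $2\x_1\cdot\x_2=-\A{\x_1}^2\mp\A{\x_1}$, on which no null structure is available; there the bilinear $U^2/V^2$ estimates lose a power of a frequency and fail to sum at the critical regularity --- this is precisely why the $V^2$-type space alone is insufficient. I would handle this region by trading dispersion for space-time Lebesgue control: on the small-modulation set the Schr\"odinger factors may be replaced by their free evolutions up to acceptable errors, the free evolution of $n_\pm$ then lies in $E$ by construction (and, dually in \eqref{eq:bW}, a third Schr\"odinger wave is placed in $E$ via the Schr\"odinger endpoint Strichartz estimate $\|e^{it\laplacian}f\|_E\lesssim\|f\|_{L^2}$), so that a frequency-localised H\"older bound in $L^2_tL^2_x$, combined with Bernstein and the modulation cutoff, supplies exactly the summable gain that the $U^2/V^2$ estimate misses. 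This is where the hypothesis $d\ge4$ enters: with the endpoint exponent $2d/(d-2)$ and the critical indices $k=(d-3)/2$, $l=(d-4)/2$ the H\"older/Bernstein bookkeeping balances, whereas in $d=3$ it does not. Combining the two regimes, plus a short separate (and easier) treatment of the high$\times$high$\to$low block in \eqref{eq:bW}, finishes \eqref{eq:bS}--\eqref{eq:bW}.

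\emph{Scattering.} With global well-posedness in hand and $(u,n_+,n_-)$ bounded on $\R$ in $\XS{k}\cross\XW{l}\cross\XW{l}$, scattering is immediate: \eqref{eq:bS}--\eqref{eq:bW} place $u(n_++n_-)/2$ in $N_S^{k}$ and $\om\A{u}^2$ in $N_{W_{\pm}}^{l}$ over $(0,\I)$, and for nonlinearities in such $N$-type spaces the Duhamel integrals $\int_0^{\I}e^{-is\laplacian}\bigl(u(n_++n_-)/2\bigr)(s)\,ds$ and $\int_0^{\I}e^{\pm is\om}\bigl(\pm\om\A{u}^2\bigr)(s)\,ds$ converge in $H^k$, resp. $\dot H^l$ --- a built-in feature of $U^2$-type spaces. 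The convergence of these integrals produces scattering states $u_{\I}\in H^k$ and $n_{\pm}^{\I}\in\dot H^l$ with $u(t)-e^{it\laplacian}u_{\I}\to0$ in $H^k$ and $n_\pm(t)-e^{\mp it\om}n_{\pm}^{\I}\to0$ in $\dot H^l$ as $t\to+\I$; running the same argument over $(-\I,0)$ gives scattering as $t\to-\I$, and the inhomogeneous case is identical.
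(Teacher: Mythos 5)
Your overall architecture (contraction for the Duhamel system in $U^2/V^2$-type spaces, reduction to the two bilinear estimates, dyadic decomposition with a modulation analysis based on the resonance function, and scattering read off from the boundedness of the Duhamel integrals) is the same as the paper's. But the one design choice on which everything hinges is inverted, and as stated it fails. You attach the auxiliary Lebesgue component $E=L^2_tL_x^{2d/(d-2)}$ (derivative-shifted) to the \emph{wave} unknown $n_\pm$ and justify its linear bound as ``exactly the endpoint Strichartz estimate for the wave equation.'' No such estimate exists: the wave endpoint is $(q,r)=(2,2(d-1)/(d-3))$, and $(2,2d/(d-2))$ lies strictly outside the wave-admissible range, so any frequency-localized substitute loses strictly more derivatives than scaling allows (in $d=4$, $N^{5/8}$ per block instead of $N^{1/2}$). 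At the critical exponents $k=(d-3)/2$, $l=(d-4)/2$ there is no slack to absorb this: the paper's introductory computation shows that even the genuine wave endpoint placement is already $1/2$ derivative worse than the Schr\"odinger placement in the decisive high$\times$low$\to$high block, which is precisely why the paper makes the opposite allocation — the wave component stays in pure $U^2/V^2$-type spaces ($\dot Z^l_{W_\pm}$, $\dot Y^l_{W_\pm}$), while $E$ is intersected with the $V^2$-type \emph{Schr\"odinger} space to form $X^k_S$, and all endpoint Lebesgue control comes from the Schr\"odinger endpoint Strichartz estimate.

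Moreover, your scheme leaves unresolved the duality obstruction that forces the intersection space in the first place. When the Schr\"odinger Duhamel term is estimated in a $U^2$-based norm by duality, the dual function lies only in $V^2_S$, and the endpoint transference $V^2_S\to E$ is false; the paper circumvents this by measuring the output in $Y^k_S+E^k$ (dualizing the $V^2$ part against $U^2_S\subset E$ via Corollary \ref{V^2_A} and the $E$ part by the classical Strichartz duality of Proposition \ref{Strichartz}), which is exactly why the $E$-norm of $u$ itself must be propagated in the iteration. Your proposed device for the resonant region — ``the Schr\"odinger factors may be replaced by their free evolutions up to acceptable errors'' — is not a legitimate operation in these spaces: a $V^2$ function is not a free evolution plus a small error on the small-modulation set, and the usable substitute (transference for $U^2$ atoms, Proposition \ref{multilinear}) is unavailable precisely for the $V^2$ dual factor. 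Note also that in the frequency-separated blocks the resonance is never small (Lemma \ref{Recovery} gives $\max$ modulation $\gec\max\{|\xi_1|^2,|\xi_2|^2\}$); the blocks with no modulation gain ($N_2\sim N_1$ with $N_3\ll N_2$, and $N_3\sim N_2\gec N_1$) are exactly where the paper spends the $E$-norms of the two Schr\"odinger factors against $L^\infty_tL^{d/2}_x$ of the wave factor (Lemma \ref{trilinear} $(i)$, $(iia)$), which your allocation cannot reproduce. Finally, the inhomogeneous ($H^l$) case is not ``strictly easier'' at $d=4$, where $l=0$ forces the separate low-frequency argument of Corollary \ref{B.E.S.W.}.
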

\begin{rem}
Note that $(n_+,n_-)\in \dot{H}^{l}(\R^d) \cross \dot{H}^{l}(\R^d)$ (resp.  ${H}^l(\R^d) \cross {H}^{l}(\R^d)$)
is equivalent to $(n,\p_t n)\in \dot{H}^l(\R^d) \cross \dot{H}^{l-1}(\R^d)$ (resp. ${H}^l(\R^d) \cross \om{H}^{l}(\R^d)$).
If we use the transform $n_{\pm}:=n\pm i\om_1^{-1} \p_t n$ with $\om_1:=\sqrt{1-\laplacian}$ instead of $n_{\pm}:=n\pm i\om^{-1} \p_t n$,
then $(n_+,n_-)\in {H}^l(\R^d) \cross {H}^{l}(\R^d)$ is equivalent to
$(n,\p_t n) \in {H}^l(\R^d) \cross {H}^{l-1}(\R^d)$ and the second  equation of \eqref{redn} is rewritten  into
\EQ{\label{trans1}
(i\p_t \mp \om_1) n_{\pm} = \mp \om_1^{-1} \laplacian |u|^2 \mp \om_1^{-1}(n_++n_-)/2.
}
This transform was used in \cite{GTV} to study the local well-posedness.
We can deal with the first term of the right-hand side of \eqref{trans1} in the same manner as \eqref{redn}. The second term $\om_1^{-1}(n_++n_-)/2$ is harmless when we consider the local well-posedness.
However, we do not know how to deal with it when we consider the global well-posedness. For this reason, the global well-posedness of \eqref{d-Z} in $H^k(\R^d) \cross H^l(\R^d) \cross H^{l-1}(\R^d)$ is still open problem.
\end{rem}
For more precise statement of Theorem \ref{SDGW}, see Propositions \ref{main_prop1}, \ref{main_prop2}.
Here, we briefly mention the known results for the Cauchy problem for \eqref{d-Z}.
There are many results for $3\ge d\ge 1$.
The local and global well-posedness and related results were obtained in 1990s.  For the case on $\R^d$, see \cite{BH, BHHT,BC, CHT, FPZ, GTV, GV4, GLNW, GN, GNW, HPS, Ho, KPV2, MN2, OT, OT4, P, Shi}.
For the case on $\T^d$, see \cite{Bo2, Ki, Ki2, Tak}.
The scattering results were obtained only after 2010 in \cite{GLNW, GN, GNW, HPS}.
All these results are for the sub critical case.
For $d \ge 4$, Ginibre, Tsutsumi and Velo \cite{GTV} proved the local well-posedness of \eqref{d-Z}
when the initial data is in $H^k(\R^d) \times H^l(\R^d) \times H^{l-1}(\R^d)$
with $2k>l+(d-2)/2, l>(d-4)/2, l+1 \ge k \ge l$, which is the sub critical case.
Recently,  Bejenaru, Guo, Herr and Nakanishi \cite{BGHN} have proved the small data global well-posedness and the scattering in a range of $(k,l)$ for $d=4$, which includes
the critical case $(k,l)=(1/2,0)$ and the energy space $(k,l)=(1,0)$.

The main difficulty in the study of the well-posedness of the Zakharov system arises from so called ``derivative loss''.
The both nonlinear terms of \eqref{redn} have
a half derivative loss when $k=l+1/2$.
To recover the derivative loss, Ginibre, Tsutsumi and Velo \cite{GTV} applied the Fourier restriction norm method, which was introduced by Bourgain \cite{Bo}.
Bejenaru, Guo, Herr and Nakanishi \cite{BGHN} used the normal form reduction and transformed \eqref{redn} into a system which does not have derivative loss.
Our proof is more direct than their proof.
We use the $U^2,\ V^2$ type spaces, which were introduced by
Koch and Tataru \cite{KT1}, \cite{KT2} and applied by
Hadac, Herr and Koch \cite{HHK} to the small data global well-posedness
and the scattering for the KP-II equation at the scale critical space.
There are two merits for using these function spaces.
One is that we can recover the derivative loss,
by combining Lemma \ref{Recovery} and \eqref{mod} in Proposition \ref{modulation}.
The other is that we can employ the Strichartz estimate (see Proposition \ref{Strichartz}) by Corollary \ref{Strichartz-S} and we gain some integrability.
Though the Fourier restriction norm $X^{s,1/2+\e}$ also have the same merits,
it seems difficult to apply it for the critical case.
Because the estimate has small loss of integrability if we take $\e \le 0$ when we employ the Strichartz estimate (see Lemma 2.4 in \cite{GTV}) and we can recover only $1/2-\e$ derivative loss if we take $\e>0$ and it is not enough for our purpose.
This is the reason why the results in \cite{GTV} is only for sub critical case and we use not the Fourier restriction norm but the $U^2,\ V^2$ type spaces.

There is another difficulty for the Zakharov system.
It is caused by the difference of the dilation scale of the Schr\"{o}dinger equation and the wave equation.
It is known that the effect by oscillatory integral for the Schr\"{o}dinger equation works more effectively than that of the wave equation.
For instance, for $d=4$, by the H\"older inequality and the Bernstein inequality, we have 
\EQQ{
\| ( P_N e^{it\laplacian}f) (P_N e^{\mp it\sqrt{\laplacian}} g) \|_{L^2_t L^2_x}
  &\lec\| P_N e^{it\laplacian} f\|_{L^2_tL^\infty_x}
    \| P_N e^{\mp it\sqrt{\laplacian}} g \|_{L^\infty_t L^2_x}\\
&\lec N\| P_N e^{it\laplacian} f\|_{L^2_tL^4_x}
    \| P_N e^{\mp it\sqrt{\laplacian}} g \|_{L^\infty_t L^2_x}\\
&\lec N\|P_N f\|_{L^2_x} \|P_N g\|_{L^2_x},
}
if we use the endpoint Strichartz estimate for the Schr\"odinger equation, and
\EQQ{
\| ( P_N e^{it\laplacian}f) (P_N e^{\mp it\sqrt{\laplacian}} g) \|_{L^2_t L^2_x}
  &\lec\| P_N e^{it\laplacian} f\|_{L^\infty_tL^2_x}
    \| P_N e^{\mp it\sqrt{\laplacian}} g \|_{L^2_t L^\infty_x}\\
&\lec \| P_N e^{it\laplacian} f\|_{L^\infty_tL^2_x}
    N^{2/3}\| P_N e^{\mp it\sqrt{\laplacian}} g \|_{L^2_t L^6_x}\\
&\lec N^{3/2}\|P_N f\|_{L^2_x} \|P_N g\|_{L^2_x},
}
if we use the endpoint Strichartz estimate for the wave equation.
The former estimate is $1/2$ derivative better than the latter.
Therefore, to estimate the quadratic nonlinear term, we use the endpoint Strichartz estimate for the Schr\"odinger equation,
that is to say the case of $(p_1,q_1)=(2,2d/(d-2))$ in Proposition \ref{Strichartz}.
This causes the following problem: if we use the $U^2$ type function space and follow the argument by Hadac, Herr and Koch \cite{HHK},
then by duality argument (see Proposition \ref{U^2norm}) we need to estimate $L^2_t L^{2d/(d-2)}_x$ norm by the $V^2$ type norm.
However, we can not get such estimate by Corollary \ref{Strichartz-S} because the $V^2$ type norm is slightly weaker than $U^2$ type norm.
For this reason, we need the function space 
weaker than the $U^2$ type and stronger than the $V^2$ type.
For that purpose, we use an intersection space of $V^2$ type space and  $E:=L^2_t L^{2d/(d-2)}_x$.
See the definition of $\|u\|_{X^k_S}$ in Definition \ref{defX},
which is the main idea in the present paper.
Note that the $L^4$ Strichartz estimate was used and this difficulty was not caused for the KP-II equation in \cite{HHK}.
 
Finally, we refer to the plan of the rest of the paper.
We introduce function spaces, their properties and some lemmas in Section 2.
In Section 3, we derive the key bilinear estimate for the homogeneous case, Proposition \ref{B.E.homo}.
As a corollary, we also prove the bilinear estimate for the inhomogeneous case, Corollary \ref{B.E.S.W.}.
In Section 4, we mention the detail of main theorem and its proof.

\section*{Acknowledgement}
The second author is supported by JSPS KAKENHI Grant Number 25400158.


\section{notations and preliminary lemmas}
In this section, we prepare some lemmas, propositions and notations to prove the main theorem.
Notations related to $U^p$ and $V^p$ spaces are based on the definition in \cite{HHK} and \cite{HHK2}.
$A\lec B$ means that there exists $C>0$ such that $A \le CB.$ 
Also, $A\sim B$ means $A\lec B$ and $B\lec A.$  
Let $u=u(t,x).\ \F_t u,\ \F_x u$ denote the Fourier transform of $u$ in time, space, respectively. 
$\F_{t,\, x} u = \ha{u}$ denotes the Fourier transform of $u$ in space and time.    
Let $\mathcal{Z}$ be the set of finite partitions $-\I=t_0<t_1<\cdots <t_K = \I$ and let $\mathcal{Z}_0$ 
be the set of finite partitions $-\I<t_0<t_1<\cdots <t_K \le \I$. 
\begin{defn}
Let $1\le p< \I.$ For $\{t_k\}_{k=0}^K \in \mathcal{Z}$ and $\{ \phi_k\}_{k=0}^{K-1}\subset L^2_x$ with   
$\sum_{k=0}^{K-1} \|\phi_k \|_{L^2_x}^p=1$ and $\phi_0=0$, we call the function $a : \R \to L^2_x$ given by 
\EQQS{
 a=\sum_{k=1}^K \1_{[t_{k-1},\, t_k)}\phi_{k-1} 
}
a $U^p$-atom. Furthermore, we define the atomic space 
\EQQS{
 U^p:=\biggl{\{} u=\sum_{j=1}^{\I}\la_j a_j \, \Bigl| \, a_j : U^p \text{-atom} , \la_j \in \C \ such\ that\ \sum_{j=1}^{\I}|
          \la_j|< \I \biggr{\}}
}
with norm 
\EQQS{
 \| u\|_{U^p}:=\inf \biggl{\{} \sum_{j=1}^{\I}|\la_j| \, \Bigl| \, u=\sum_{j=1}^{\I}\la_j a_j, \la_j\in \C, a_j : U^p \text{-atom}
                   \biggr{\}}.
} 
\end{defn} 
\begin{prop}
Let $1\le p<q<\I.$ \\
(i) $U^p$ is a Banach space. \\
(ii) The embeddings $U^p\subset U^q\subset L^{\I}_t(\R;L^2_x)$ are continuous. \\
(iii) For $u\in U^p$, it holds that $\lim_{t\to t_{0}+}\|u(t)-u(t_{0})\|_{L^2_x}=0,$ i.e. every $u\in U^p$ is right-continuous. \\
(iv) The closed subspace $U^p_c$ of all continuous functions in $U^p$ is a Banach space.
\end{prop}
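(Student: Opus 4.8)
The plan is to deduce all four assertions from two elementary observations about atoms, after which everything reduces to standard Banach-space arguments (this follows the reasoning in \cite{HHK}). \emph{Step 1 (atoms are bounded in $L^\I_tL^2_x$, and the embeddings).} First I would record that any $U^p$-atom $a=\sum_{k=1}^K\1_{[t_{k-1},t_k)}\phi_{k-1}$ satisfies $\|a\|_{L^\I_t(\R;L^2_x)}\le1$: for each $t\in\R$ the vector $a(t)$ is one of the $\phi_{k-1}$, and $\|\phi_{k-1}\|_{L^2_x}\le\bigl(\sum_j\|\phi_j\|_{L^2_x}^p\bigr)^{1/p}=1$. Hence for any atomic representation $u=\sum_j\la_j a_j$ one has $\|u\|_{L^\I_tL^2_x}\le\sum_j|\la_j|$, so the series converges in $L^\I_t(\R;L^2_x)$, and taking the infimum over representations gives both the continuous embedding $U^p\subset L^\I_t(\R;L^2_x)$ and the definiteness of $\|\cdot\|_{U^p}$. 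For $p<q$, since $\|\phi_j\|_{L^2_x}\le1$ for each $j$ we get $c:=\sum_j\|\phi_j\|_{L^2_x}^q\le\sum_j\|\phi_j\|_{L^2_x}^p=1$, so $a/c^{1/q}$ is a $U^q$-atom (the case $c=0$ being trivial); this gives $\|a\|_{U^q}\le c^{1/q}\le1$, hence $\|u\|_{U^q}\le\|u\|_{U^p}$, which is (ii).

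\emph{Step 2 (part (i)).} Homogeneity and the triangle inequality for $\|\cdot\|_{U^p}$ are immediate from the definition as an infimum over atomic representations, and definiteness was obtained in Step 1, so $\|\cdot\|_{U^p}$ is a norm. For completeness I would use the criterion that a normed space is complete once every absolutely convergent series converges: given $(u_n)_n$ with $\sum_n\|u_n\|_{U^p}<\I$, choose atomic representations $u_n=\sum_j\la_{n,j}a_{n,j}$ with $\sum_j|\la_{n,j}|\le\|u_n\|_{U^p}+2^{-n}$, so that $\sum_{n,j}|\la_{n,j}|<\I$. By Step 1 the doubly indexed series $u:=\sum_{n,j}\la_{n,j}a_{n,j}$ converges in $L^\I_t(\R;L^2_x)$ and, by definition, $u\in U^p$. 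Since $u-\sum_{n\le N}u_n=\sum_{n>N}\sum_j\la_{n,j}a_{n,j}$ is again an atomic representation, $\|u-\sum_{n\le N}u_n\|_{U^p}\le\sum_{n>N}\sum_j|\la_{n,j}|\to0$ as $N\to\I$, so $\sum_n u_n=u$ in $U^p$.

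\emph{Step 3 (parts (iii), (iv), and the main obstacle).} Every $U^p$-atom is, as a map $\R\to L^2_x$, a step function that is constant on each half-open interval $[t_{k-1},t_k)$, hence right-continuous. For $u=\sum_j\la_j a_j$ and $\e>0$, pick $N$ with $\sum_{j>N}|\la_j|<\e$; then $\|u(t)-u(t_0)\|_{L^2_x}\le\bigl\|\sum_{j\le N}\la_j(a_j(t)-a_j(t_0))\bigr\|_{L^2_x}+2\e$, and since the finite sum tends to $0$ as $t\to t_0+$, we obtain $\limsup_{t\to t_0+}\|u(t)-u(t_0)\|_{L^2_x}\le2\e$, which gives (iii) after letting $\e\to0$. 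For (iv), if $u_n\in U^p_c$ and $u_n\to u$ in $U^p$, then by Step 1 $u_n\to u$ in $L^\I_tL^2_x$, i.e. uniformly in $L^2_x$, so $u$ is continuous; thus $U^p_c$ is closed in $U^p$, and a closed subspace of the Banach space $U^p$ is itself Banach. The only point requiring genuine care is the completeness argument in Step 2, where one must reorganise a double series of atoms and verify convergence in the ambient space $L^\I_tL^2_x$; all the remaining claims are essentially immediate once the uniform bound for atoms and their step-function structure are in hand.
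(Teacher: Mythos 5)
Your proof is correct: the uniform bound $\|a\|_{L^\I_t L^2_x}\le 1$ for atoms, the rescaling of $U^p$-atoms into $U^q$-atoms, the absolutely-convergent-series criterion for completeness, and the uniform-limit arguments for (iii) and (iv) are exactly the standard atomic-space reasoning. The paper itself gives no proof of this proposition but simply cites Proposition 2.2 of Hadac--Herr--Koch, and your argument follows essentially the same route as that reference, so there is nothing to flag.
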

The above proposition is in ~\cite{HHK} (Proposition 2.2).  
\begin{defn}\label{def_of_V}
Let $1\le p<\I.$ We define $V^p$ as the normed space of all functions $v:\R\to L^2_x$ such that 
$\lim_{t\to \pm \I}v(t)$ exist and for which the norm 
\EQQS{
\| v\|_{V^p}:=\sup_{\{ t_k\}_{k=0}^K\in \mathcal{Z}}\Bigl(\sum_{k=1}^K\| v(t_k)-v(t_{k-1})\|_{L^2_x}^p\Bigr)^{1/p}  
} 
is finite, where we use the convention that $v(-\I):=\lim_{t\to -\I}v(t)$ and $v(\I):=0.$
Note that $v(\infty)$ does not necessarily coincide with the limit at $\infty$.
Likewise, let $V_-^p$ denote the closed subspace of all $v \in V^p$ with $\lim_{t\to -\I}v(t)=0.$ 
\end{defn}
For the definitions of $V^p$ and $V^p_-$, see the erratum ~\cite{HHK2}.

\begin{prop} \label{embedding}
Let $1\le p<q<\I.$\\
(i) Let $v:\R \to L^2_x$ be such that 
\EQQ{
\sup_{ \{ t_k\}_{k=0}^K\in \mathcal{Z}_0, t_K<\infty } \Bigl( \sum_{k=1}^K\| v(t_{k})-v(t_{k-1})\|_{L^2_x}^p\Bigr)^{1/p}
}
is finite. Then, it follows that $v(t_0^+):=\lim_{t\to t_0+} v(t)$ exists for all $t_0\in [-\I,\I)$,
$v(t_0^{-}):=\lim_{t\to t_0-} v(t)$ exists for all $t_0\in (-\I,\I]$.\\
(ii) We define the closed subspace $V^p_{rc}\, (V^p_{-,\, rc})$ of all right-continuous $V^p$ functions ($V^p_-$ functions). 
The spaces $V^p,\ V^p_{rc},\ V^p_-$ and $V^p_{-,\, rc}$ are Banach spaces. \\
(iii) The embeddings $U^p\subset V^p_{-,\, rc}\subset U^q$ are continuous. \\
(iv) The embeddings $V^p\subset V^q$ and $V^p_-\subset V^q_-$ are continuous. 
\end{prop}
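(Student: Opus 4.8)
\emph{Proof idea.} The plan is to establish (i)--(iv) by elementary variation arguments, following Hadac--Herr--Koch. For (i), I would argue by contradiction: if, say, $v(t_0^+)$ failed to exist there would be an $\eps>0$ and a strictly decreasing sequence $s_1>s_2>\cdots$ with $s_j\downarrow t_0$ and $\|v(s_{j+1})-v(s_j)\|_{L^2_x}\ge\eps$ for all $j$; for each $M$ the finite family $t_0<s_{M+1}<s_M<\cdots<s_1<s_0$ (with any fixed $s_0>s_1$) lies in $\mathcal{Z}_0$ and has finite largest element, so the supremum in the hypothesis of (i) would be at least $M^{1/p}\eps\to\I$, which is absurd; the left limit $v(t_0^-)$ is treated the same way with an increasing sequence. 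For (ii), $V^p$ is a normed space by construction, and testing against two-point partitions together with the assumed existence of $\lim_{t\to\pm\I}v(t)$ gives the continuous inclusion $V^p\hookrightarrow L^\I_t(\R;L^2_x)$; hence a $V^p$-Cauchy sequence converges uniformly in $L^2_x$ to some $v$, and lower semicontinuity of each finite sum $\sum_k\|v(t_k)-v(t_{k-1})\|_{L^2_x}^p$ under uniform convergence shows $v\in V^p$ and $v_j\to v$ in $V^p$, so $V^p$ is complete. Right-continuity (via (i)) and the condition $\lim_{t\to-\I}v(t)=0$ both persist under uniform $L^2_x$-limits, so $V^p_{rc},\ V^p_-$ and $V^p_{-,rc}$ are closed subspaces and therefore Banach.

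For (iii), the inclusion $U^p\subset V^p_{-,rc}$ is the easy direction: refining an arbitrary partition by the breakpoints of an atom shows that every $U^p$-atom $a=\sum_{k=1}^K\1_{[t_{k-1},t_k)}\phi_{k-1}$ has $\|a\|_{V^p}\lec 1$, so $\|u\|_{V^p}\lec\|u\|_{U^p}$ follows from the atomic decomposition; together with right-continuity of $U^p$ functions and $\lim_{t\to-\I}u(t)=0$ this gives the embedding. The substantive part is $V^p_{-,rc}\subset U^q$. Here I would normalize $\|v\|_{V^p}=1$ and run the standard stopping-time decomposition: for each integer $\mu\ge 0$ set $\tau^\mu_0=-\I$ and $\tau^\mu_{j+1}=\inf\{t>\tau^\mu_j:\|v(t)-v(\tau^\mu_j)\|_{L^2_x}\ge 2^{-\mu}\}$, where right-continuity guarantees both that the infimum is attained when finite and that $\tau^\mu_{j+1}>\tau^\mu_j$; since each step contributes an increment $\ge 2^{-\mu}$ to the $p$-variation, there are at most $\lec 2^{\mu p}$ of the $\tau^\mu_j$. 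Expanding $v$ as the telescoping sum over the successive dyadic layers indexed by $\mu$, each layer is a bounded multiple of a $U^q$-atom with constant $2^{-\mu}(2^{\mu p})^{1/q}=2^{\mu(p/q-1)}$, and since $q>p$ these constants are summable in $\mu$; this yields $\|v\|_{U^q}\lec\|v\|_{V^p}$.

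For (iv), for any $\{t_k\}_{k=0}^K\in\mathcal{Z}$ the inclusion $\ell^p\hookrightarrow\ell^q$ gives $\bigl(\sum_k\|v(t_k)-v(t_{k-1})\|_{L^2_x}^q\bigr)^{1/q}\le\bigl(\sum_k\|v(t_k)-v(t_{k-1})\|_{L^2_x}^p\bigr)^{1/p}$, and taking the supremum over partitions yields $\|v\|_{V^q}\le\|v\|_{V^p}$; the limits at $\pm\I$, and their vanishing at $-\I$ in the $V_-$ case, are inherited from $V^p$, so $V^p\subset V^q$ and $V^p_-\subset V^q_-$ continuously. The step I expect to be the main obstacle is the embedding $V^p_{-,rc}\subset U^q$ in (iii): one must set up the stopping times $\tau^\mu_j$ carefully for a function that is only right-continuous (so that they are well defined and strictly increasing, in particular increasing through $t_0^+$), and then do the bookkeeping that realizes each dyadic layer as a genuine $U^q$-atom and makes the sum over layers converge --- and it is precisely here that the strict gap $p<q$ is indispensable.
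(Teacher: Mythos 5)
Your proposal is correct and follows essentially the argument of Hadac--Herr--Koch (Proposition 2.4, Proposition 2.5/Corollary 2.6 and the erratum), which is exactly what the paper relies on: it gives no proof of its own beyond citing \cite{HHK} and noting that part (i) also holds for partitions in $\mathcal{Z}_0$ with $t_K<\infty$ --- a variant your contradiction argument in (i) already covers, since your partitions have finite largest element. The stopping-time/dyadic-layer decomposition you outline for $V^p_{-,rc}\subset U^q$, with at most $\lec 2^{\mu p}$ stopping times per scale and layer constants $2^{\mu(p/q-1)}$ summable precisely because $p<q$, is the same mechanism as in the cited proof.
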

Note that the embedding in $(iii)$ is not consistent with the convention $v(+\infty)=0$ in Definition \ref{def_of_V} unless $v$ is discontinuous at $+\infty$.
For the proof of Proposition \ref{embedding}, see ~\cite{HHK} (Proposition 2.4 and Corollary 2.6).
Precisely, the statement of Proposition 2.4 $(i)$ in ~\cite{HHK} is for the partition $\{ t_k\}_{k=0}^K \in \mathcal{Z}_0$.
But, we can easily check that $(i)$ above is also true for the partition $\{ t_k\}_{k=0}^K\in \mathcal{Z}_0$ with $t_K<\infty$.

Let $\{ \F_{x}^{-1}[\varphi_n](x)\}_{n\in \Z}\subset \mathcal{S}(\R^d)$ be the Littlewood-Paley decomposition with respect 
to $x$, that is to say
\EQQS{
\begin{cases}
 \varphi(\xi) \ge 0, \\
 \supp \varphi(\xi) = \{ \xi \,|\, 2^{-1} \le |\xi| \le 2\}, 
\end{cases}
}
\EQQS{
 \varphi_{n}(\xi) := \varphi(2^{-n}\xi),\ \sum_{n=-\I}^{\I}\varphi_{n}(\xi)=1\ (\, \xi \neq 0),\ 
 \psi(\xi) := 1-\sum_{n=0}^{\I}\varphi_{n}(\xi).
} 
Let $N=2^n\ (n \in \Z)$ be dyadic number. $P_N$ and $P_{<1}$ denote 
\EQQS{
 &\F_{x}[P_N f](\xi):=\varphi(\xi/N)\F_x[f](\xi)=\varphi_n(\xi)\F_x[f](\xi), \\
 &\F_{x}[P_{<1} f](\xi):=\psi(\xi)\F_x[f](\xi).
} 
Similarly, let $Q_N$ be 
\EQQS{
 \F_t [Q_N g](\ta):=\phi(\ta/N)\F_t[g](\ta)=\phi_n(\ta)\F_t[g](\ta),
}
where  $\{ \F_{t}^{-1}[\phi_{n}](t)\}_{n\in \Z}\subset \mathcal{S}(\R)$ be the Littlewood-Paley decomposition with respect 
to $t$ that is to say, $\phi_n$ is defined by the same manner as $\vp_n$ with $d=1$.
Let $S(t)=\exp \{it \laplacian \}: L^2_x\to L^2_x$ be the Schr\"{o}dinger unitary operator such that 
$\F_x[S(t)u_0](\xi ) = \exp \{-it |\xi|^2\}\, \F_x[u_0](\xi ).$ 
Similarly, we define the wave unitary operator $W_{\pm}(t)=\exp \{\mp it (-\laplacian)^{1/2}\}:L^2_x\to L^2_x$ such that 
$\F_x[W_{\pm}(t)n_0](\xi ) = \exp \{\mp it |\xi |\}\, \F_x[n_0](\xi ).$
\begin{defn}
We define \\ 
 $(i)\, U^p_S=S(\cdot)U^p$ with norm $\| u\|_{U^p_S}=\|S(-\cdot )u\|_{U^p},$\\
 $(ii)\, V^p_S=S(\cdot)V^p$ with norm $\| u\|_{V^p_S}=\|S(-\cdot )u\|_{V^p}.$\\
For dyadic numbers $N,\ M$, 
\EQQS{
 Q^S_N  := S(\cdot)Q_N S(-\cdot),\ Q_{\ge M}^S:=\sum_{N\ge M}Q^S_N,\ Q_{<M}^S:=Id-Q_{\ge M}^S.
}
Here summation over $N$ means summation over $n\in \Z$. 
Similarly, we define $U^p_{W_{\pm}}$, $V^p_{W_{\pm}}$, $Q_N^{W_{\pm}}$, $Q_{<M}^{W_{\pm}}$ and $Q_{\ge M}^{W_{\pm}}.$ 
\end{defn}
\begin{rem} \label{embed}
For $L^2_x$ unitary operator $A=S$ or $W_{\pm},$ 
\EQQS{
 U^2_A \subset V^2_{-,\, rc,\, A} \subset L^{\I}(\R; L^2_x)
}
holds by Proposition 2.1 $(ii)$ and Proposition \ref{embedding} $(iii)$. 
\end{rem}
\begin{defn} \label{defX}
For the Schr\"{o}dinger equation, we define 
$X^k_S$ as the closure of all $u \in C(\R; H^k_x(\R^d)) \cap \LR{\na _x}^{-k}V^2_{-,\, rc,\, S}$ such that   
\EQQS{
  \|u\|_{X^k_S}&:=\|u\|_{Y^k_S}+\|u\|_{E^k} < \I,\ 
  \|u\|_{Y^k_S}:=\|P_{<1}u\|_{V^2_S}+\Bigl(\sum_{N\ge 1} N^{2k}\|P_N u\|^2_{V^2_S}\Bigr)^{1/2} ,\\ 
  \|u\|_{E^k}&:=\|P_{<1}u\|_E+\Bigl(\sum_{N\ge 1} N^{2k}\|P_N u\|^2_E\Bigr)^{1/2}
}
with respect to the $\|\cdot\|_{X^k_S}$ norm,
where $E:=L_t^2L_x^{2d/(d-2)}$.
For the wave equation, we define 
\EQQS{
 &\|n\|_{\dot{Z}^l_{W_{\pm}}}:=\Bigl(\sum_N N^{2l}\| P_N n\|^2_{U^2_{W_{\pm}}}\Bigr)^{1/2},\,
 \|n\|_{Z^l_{W_{\pm}}}:=\|P_{<1}n\|_{U^2_{W_{\pm}}}+\Bigl(\sum_{N\ge 1} N^{2l}\|P_N n\|^2_{U^2_{W_{\pm}}}\Bigr)^{1/2}, \\
 &\|n\|_{\dot{Y}^l_{W_{\pm}}}:=\Bigl(\sum_N N^{2l}\| P_N n\|^2_{V^2_{W_{\pm}}}\Bigr)^{1/2},\, 
 \|n\|_{Y^l_{W_{\pm}}}:=\|P_{<1}n\|_{V^2_{W_{\pm}}}+\Bigl(\sum_{N\ge 1} N^{2l}\|P_N n\|^2_{V^2_{W_{\pm}}}\Bigr)^{1/2}.
}
\end{defn}
\begin{defn}
For a Hilbert space $H$ and a Banach space $X\subset C(\R; H)$, we define
\EQQS{
 &B_r(H):=\{ f \in H\, |\, \|f\|_H \le r \}, \\
 &X([0,T]):=\{ u \in C([0,T];H)\, |\, \exists \til{u} \in X , \til{u}(t)=u(t), t \in [0,T] \} 
}
endowed with the norm $\|u\|_{X([0,T])}=\inf \{ \|\til{u}\|_X |\,  \til{u}(t)=u(t), t \in [0,T]\}.$
\end{defn}

\begin{lem} \label{estX}
Let $a \ge 0$. Then for $A=S$ or $W_{\pm}$, it holds that 
\EQQS{
 \|\LR{\na _x}^a f\|_{V^2_A} \lec \|f\|_{Y^a_A}. 
}
\end{lem}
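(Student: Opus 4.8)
The plan is to split $f$ into Littlewood--Paley pieces, move $\LR{\na_x}^a$ onto each piece — where it becomes a bounded spatial Fourier multiplier times the corresponding power of the dyadic frequency — and then reassemble by an almost orthogonality property of the $V^2$ norm. Throughout, the structural point is that $A=S$ or $W_\pm$ acts on the spatial Fourier side as multiplication by the unit-modulus symbol $e^{-it|\x|^2}$ (resp. $e^{\mp it|\x|}$), so it commutes with $P_N$, $P_{<1}$ and with any time-independent spatial Fourier multiplier $m(\na_x)$. Combined with the definition of the $V^2$ norm this gives, whenever $m$ is bounded, $\|m(\na_x)v\|_{V^2_A}\lec\|m\|_{L^\infty}\|v\|_{V^2_A}$: indeed $m(\na_x)$ commutes past $A(-\cdot)$ and then acts boundedly on every increment $v(t_k)-v(t_{k-1})$ in $L^2_x$, being also compatible with the limits at $\pm\I$ in Definition \ref{def_of_V} since it is continuous on $L^2_x$.

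First I would establish the almost orthogonality estimate: if $v=\sum_{j}v_j$ is a finite sum with the spatial Fourier supports of the $v_j(t,\cdot)$ of bounded overlap uniformly in $t$, then $\|v\|_{V^2_A}^2\lec\sum_j\|v_j\|_{V^2_A}^2$. This follows from the definition of the $V^2$ norm: setting $w:=A(-\cdot)v$, for any partition each increment $w(t_k)-w(t_{k-1})=\sum_j\bigl(A(-t_k)v_j(t_k)-A(-t_{k-1})v_j(t_{k-1})\bigr)$ is a sum of functions whose spatial Fourier supports have bounded overlap, so $\|w(t_k)-w(t_{k-1})\|_{L^2_x}^2\lec\sum_j\|A(-t_k)v_j(t_k)-A(-t_{k-1})v_j(t_{k-1})\|_{L^2_x}^2$ by Plancherel; summing in $k$, exchanging the two positive sums, and taking the supremum over partitions gives the claim. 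The bound extends to countable sums by the usual Cauchy-sequence argument in the Banach space $V^2_A$ once $\sum_j\|v_j\|_{V^2_A}^2<\I$, the limit agreeing with $\sum_j v_j$ in $L^\I_tL^2_x$ by Remark \ref{embed}.

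Now I apply this with $f=P_{<1}f+\sum_{N\ge1}P_Nf$ (bounded overlap in frequency) together with the commutation identity, reducing matters to the single-block estimates $\|\LR{\na_x}^aP_{<1}f\|_{V^2_A}\lec\|P_{<1}f\|_{V^2_A}$ and $\|\LR{\na_x}^aP_Nf\|_{V^2_A}\lec N^a\|P_Nf\|_{V^2_A}$ for $N\ge1$. For the low block, $\LR{\na_x}^aP_{<1}=m_0(\na_x)$ with $m_0(\x)=\LR{\x}^a\psi(\x)$, which is bounded since $\psi$ has bounded support. For $N\ge1$, since $P_Nf$ has spatial frequencies $\sim N$ I insert a fattened cutoff $\ti\vp(\x/N)$ equal to $1$ on $\supp\vp(\cdot/N)$ and write $\LR{\na_x}^aP_Nf=N^ar_N(\na_x)P_Nf$ with $r_N(\x):=N^{-a}\LR{\x}^a\ti\vp(\x/N)$; since $\LR{\x}\sim N$ on $|\x|\sim N\ge1$ one has $\|r_N\|_{L^\infty}\lec1$ uniformly in $N$, so the block estimate follows from the multiplier bound above. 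Putting everything together,
\EQQS{
\|\LR{\na_x}^af\|_{V^2_A}^2\lec\|P_{<1}f\|_{V^2_A}^2+\sum_{N\ge1}N^{2a}\|P_Nf\|_{V^2_A}^2\le\|f\|_{Y^a_A}^2,
}
which is the assertion; the countable-sum version also shows that the Littlewood--Paley series for $\LR{\na_x}^af$ converges in $V^2_A$ whenever the right-hand side is finite, so there is no ambiguity in the left-hand side. The one genuinely delicate ingredient is the almost orthogonality estimate of the second step; note that the naive route through the triangle inequality in the Banach space $V^2_A$ would only yield the $\ell^1$-type bound $\sum_N N^a\|P_Nf\|_{V^2_A}$, which is strictly larger than $\|f\|_{Y^a_A}$, so the $L^2_x$-orthogonality coming from frequency localization is essential. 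Everything else is routine manipulation of bounded Fourier multipliers.
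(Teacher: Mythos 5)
Your proof is correct and follows essentially the same route as the paper: the paper likewise applies $L^2_x$ orthogonality of the Littlewood--Paley blocks to each increment $A(-t_i)f(t_i)-A(-t_{i-1})f(t_{i-1})$, bounds $\LR{\xi}^{2a}$ by a constant (low block) or $N^{2a}$ on each block, and interchanges the supremum over partitions with the sum over $N$ --- which is exactly your almost-orthogonality step combined with your bounded-multiplier estimates. Your additional care about convergence of the countable Littlewood--Paley sum in $V^2_A$ is a more modular packaging of the same computation and is avoided in the paper by working directly with the partition sums for $\LR{\na_x}^a f$ rather than decomposing $f$ first.
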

\begin{proof}
By $\ L^2_x$ orthogonality,
\EQQS{ 
  \|\LR{\na _x}^a f\|_{V^2_A}^2 
    &\lec \sup_{\{t_i\}_{i=0}^I \in \mathcal{Z}} 
              \sum_{i = 1}^I \Bigl(\|P_{<1}\bigl( A(-t_i)f(t_i)-A(-t_{i-1})f(t_{i-1})\bigr)\|_{L^2_x}^2 \notag \\
    & \ \ \ \ \ \ \ \ \ \              
              + \sum_{N \ge 1} N^{2a} \|P_N\bigl(A(-t_i)f(t_i)-A(-t_{i-1})f(t_{i-1})\bigr)\|_{L^2_x}^2 \Bigr) \notag \\
    &\lec \sup_{\{t_i\}_{i=0}^I \in \mathcal{Z}} 
              \sum_{i=1}^I \|A(-t_i)P_{<1}f(t_i)-A(-t_{i-1})P_{<1}f(t_{i-1})\|_{L^2_x}^2 \notag \\
    & \ \ \ \ \ \ \ \ \ \     
              + \sum_{N \ge 1} N^{2a} \sup_{\{t_i\}_{i=0}^I \in \mathcal{Z}} 
                   \sum_{i=1}^I \|A(-t_i)P_N f(t_i)-A(-t_{i-1})P_N f(t_{i-1})\|_{L^2_x}^2 \notag \\
    &\lec \|f\|_{Y^a_A}^2.
}
\end{proof}
\begin{rem} \label{estx}
Similarly, we see 
\EQQS{
 \bigl{\|} |\na _x|^a f\|_{V^2_A} \lec \|f\|_{\dot{Y}^a_A}.  
}
\end{rem}
For the proof of the following propositions, see Proposition 2.7, Theorem 2.8 and Proposition 2.10 in ~\cite{HHK}. 
\begin{prop} \label{Bform}
Let $1<p, p'<\infty$ satisfy $1/p+1/p'=1$. 
For $u \in U^p$ and $v \in V^{p'}$ and a partition $t := \{t_i\}_{i=0}^{I} \in \mathcal{Z}$ we define 
\EQQS{
 B_{t}(u,v) := \sum_{i=1}^I \LR{u(t_{i-1}),v(t_i)-v(t_{i-1})}_{L^2_x}. 
}
There is a unique number $B(u,v)$ with the property that for all $\e >0$ there exists $t \in \mathcal{Z}$ such that for every 
$t' \supset t$ it holds 
\EQQS{
 |B_{t'}(u,v)-B(u,v)|<\e ,  
}
and the associated bilinear form 
\EQQS{
 B: U^p \cross V^{p'} \ni (u,v) \mapsto B(u,v) \in \C
}
satisfies the estimate 
\EQQS{
 |B(u,v)| \le \|u\|_{U^p} \|v\|_{V^{p'}}.
}
\end{prop}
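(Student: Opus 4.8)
The plan is to prove the statement first for a single $U^p$-atom, where everything is a finite sum, and then propagate it to general $u\in U^p$ via the atomic decomposition. So fix a $U^p$-atom $a=\sum_{k=1}^K \1_{[\tau_{k-1},\tau_k)}\phi_{k-1}$ and an arbitrary partition $t=\{t_i\}_{i=0}^I\in\mathcal Z$. Since $t_0<t_1<\cdots<t_{I-1}$ is increasing, for each $k$ the set of indices $i$ with $t_{i-1}\in[\tau_{k-1},\tau_k)$ is a consecutive (possibly empty) block, and on it $a(t_{i-1})=\phi_{k-1}$ is constant; hence the corresponding part of $B_t(a,v)$ telescopes to $\LR{\phi_{k-1},v(b_k)-v(a_k)}_{L^2_x}$ for suitable points $a_k\le b_k$, and the intervals $[a_k,b_k)$ coming from the nonempty blocks are contiguous and pairwise disjoint. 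Thus $B_t(a,v)=\sum_k\LR{\phi_{k-1},v(b_k)-v(a_k)}_{L^2_x}$, and Cauchy--Schwarz in $x$ followed by H\"older in $k$ with exponents $p,p'$, using $\sum_k\|\phi_{k-1}\|_{L^2_x}^p\le1$ and $\sum_k\|v(b_k)-v(a_k)\|_{L^2_x}^{p'}\le\|v\|_{V^{p'}}^{p'}$, gives $|B_t(a,v)|\le\|v\|_{V^{p'}}=\|a\|_{U^p}\|v\|_{V^{p'}}$ for \emph{every} partition $t$. Moreover, as soon as $t$ contains the breakpoints $\tau_1,\dots,\tau_{K-1}$ one gets $a_k=\tau_{k-1},\ b_k=\tau_k$, so $B_t(a,v)=\sum_k\LR{\phi_{k-1},v(\tau_k)-v(\tau_{k-1})}_{L^2_x}$, a number independent of such $t$; call it $B(a,v)$. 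Hence for atoms the net $t\mapsto B_t(a,v)$ is eventually constant, the refinement property is trivial, and $|B(a,v)|\le\|v\|_{V^{p'}}$.

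Next, take a general $u\in U^p$ and an atomic decomposition $u=\sum_j\lambda_j a_j$ with $\sum_j|\lambda_j|<\I$. Since $U^p\subset L^\I(\R;L^2_x)$ (Proposition 2.1 (ii)), the series converges in $L^\I(\R;L^2_x)$, so $u(t_{i-1})=\sum_j\lambda_j a_j(t_{i-1})$ in $L^2_x$, and by linearity of $B_t$ in the first slot together with the uniform atom bound, $B_t(u,v)=\sum_j\lambda_j B_t(a_j,v)$ with the series converging absolutely and uniformly in $t$, and $|B_t(u,v)|\le\bigl(\sum_j|\lambda_j|\bigr)\|v\|_{V^{p'}}$. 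Define $B(u,v):=\sum_j\lambda_j B(a_j,v)$. To check the refinement property (assume $\|v\|_{V^{p'}}>0$, else everything vanishes): given $\e>0$, pick $J$ with $\sum_{j>J}|\lambda_j|<\e/(2\|v\|_{V^{p'}})$ and let $t$ contain all breakpoints of $a_1,\dots,a_J$; then for every $t'\supset t$ we have $B_{t'}(a_j,v)=B(a_j,v)$ for $j\le J$, hence
\EQQS{
 |B_{t'}(u,v)-B(u,v)|\le\sum_{j>J}|\lambda_j|\bigl(|B_{t'}(a_j,v)|+|B(a_j,v)|\bigr)\le 2\|v\|_{V^{p'}}\sum_{j>J}|\lambda_j|<\e .
}

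Finally, uniqueness: if $B(u,v)$ and $\ti B(u,v)$ both satisfy the refinement property, intersecting the two partitions they provide and letting $\e\to0$ gives $B(u,v)=\ti B(u,v)$; in particular the value is independent of the chosen atomic decomposition, and bilinearity of $(u,v)\mapsto B(u,v)$ is inherited from that of each $B_t$. The estimate follows from $|B_{t'}(u,v)|\le\bigl(\sum_j|\lambda_j|\bigr)\|v\|_{V^{p'}}$, valid for all $t'$ and all decompositions, combined with the refinement property: $|B(u,v)|\le\bigl(\sum_j|\lambda_j|\bigr)\|v\|_{V^{p'}}$, and taking the infimum over decompositions yields $|B(u,v)|\le\|u\|_{U^p}\|v\|_{V^{p'}}$.

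Nothing here is deep; the two points that require some care are the telescoping/grouping for a single atom against an \emph{arbitrary} (unrefined) partition --- one must verify that the index blocks are consecutive and that the resulting intervals do not overlap, so that $\|v\|_{V^{p'}}$ genuinely dominates the sum --- and the uniform-in-$t$ convergence of the atomic series, which is precisely what allows the refinement property to be transferred from atoms to all of $U^p$. Once these are in place, the rest is a routine $\e/2$ argument.
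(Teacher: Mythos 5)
Your proposal is correct, and since the paper does not prove this statement itself but cites Proposition 2.7 of \cite{HHK}, your argument (telescoping and H\"older for a single atom against an arbitrary partition, eventual constancy of the net once the atom's breakpoints are included, then uniform-in-$t$ summation of the atomic series and an $\e/2$ refinement argument) is essentially the standard proof given there. Two cosmetic slips only: for an atom one has $\|a\|_{U^p}\le 1$ rather than equality, so the atom bound should read $|B_t(a,v)|\le \|v\|_{V^{p'}}$, and in the uniqueness step one takes the \emph{union} of the two partitions (their common refinement), not their intersection.
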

\begin{prop}\label{prop_dual}
Let $1<p<\infty$. We have
\EQQ{
(U^p)^* =V^{p'}
}
in the sense that
\EQQ{
T : V^{p'}\to (U^p)^*, \ \ T(v):=B(\cdot,v)
}
is an isometric isomorphism.
\end{prop}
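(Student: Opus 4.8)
The plan is to verify the three assertions implicit in the statement: that $T$ sends $V^{p'}$ into $(U^p)^*$ with $\|T(v)\|_{(U^p)^*}\le\|v\|_{V^{p'}}$, that $T$ is in fact an isometry (hence injective), and that $T$ is onto; linearity of $T$ is automatic since $B$ is bilinear. The first point is immediate from Proposition~\ref{Bform}: for fixed $v\in V^{p'}$ the map $u\mapsto B(u,v)$ is linear and satisfies $|B(u,v)|\le\|u\|_{U^p}\|v\|_{V^{p'}}$, so $T(v)\in(U^p)^*$ with the asserted bound.

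For the reverse inequality $\|v\|_{V^{p'}}\le\|T(v)\|_{(U^p)^*}$ I would test $T(v)$ against ``dual'' step functions. Given a partition $\{t_k\}_{k=0}^K\in\mathcal Z$, put $d_k:=v(t_k)-v(t_{k-1})$, choose $\psi_{k-1}\in L^2_x$ with $\|\psi_{k-1}\|_{L^2_x}=1$ and $\LR{\psi_{k-1},d_k}_{L^2_x}=\|d_k\|_{L^2_x}$, set $\phi_{k-1}:=\|d_k\|_{L^2_x}^{p'-1}\psi_{k-1}$ for $k\ge2$ and $\phi_0:=0$, and let $u:=\sum_{k=2}^K\1_{[t_{k-1},t_k)}\phi_{k-1}$. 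Since $p(p'-1)=p'$, a suitable rescaling of $u$ is a $U^p$-atom, so $\|u\|_{U^p}\le\bigl(\sum_{k\ge2}\|d_k\|_{L^2_x}^{p'}\bigr)^{1/p}$; on the other hand, because $u$ is a step function adapted to $\{t_k\}$, $B(u,v)$ is stationary under refinement and equals $\sum_{k\ge2}\LR{\phi_{k-1},d_k}_{L^2_x}=\sum_{k\ge2}\|d_k\|_{L^2_x}^{p'}$. Combining gives $\bigl(\sum_{k\ge2}\|d_k\|_{L^2_x}^{p'}\bigr)^{1/p'}\le\|T(v)\|_{(U^p)^*}$; inserting an auxiliary partition point tending to $-\I$ recovers the missing $k=1$ term, and the supremum over partitions then yields the claim.

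The bulk of the work is surjectivity. Given $\ell\in(U^p)^*$, I observe that for $t\in\R$ and $\phi\in L^2_x$ the function $\1_{[t,\I)}\phi$ is $\|\phi\|_{L^2_x}$ times a $U^p$-atom, so $\phi\mapsto\ell(\1_{[t,\I)}\phi)$ is a bounded functional on $L^2_x$ of norm $\le\|\ell\|_{(U^p)^*}$; define $v(t)\in L^2_x$ by $\LR{\phi,v(t)}_{L^2_x}=-\ell(\1_{[t,\I)}\phi)$ (Riesz) and set $v(\I):=0$. Using $\1_{[s,t)}=\1_{[s,\I)}-\1_{[t,\I)}$ one checks directly from this definition that for every atom $a=\sum_k\1_{[t_{k-1},t_k)}\phi_{k-1}$ with $\phi_0=0$ one has $\ell(a)=\sum_k\LR{\phi_{k-1},v(t_k)-v(t_{k-1})}_{L^2_x}$, the conventions $v(\I)=0$ and $\phi_0=0$ killing the endpoint contributions. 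Running the dual-step-function estimate of the previous paragraph with this $v$ gives $\bigl(\sum_{k\ge2}\|v(t_k)-v(t_{k-1})\|_{L^2_x}^{p'}\bigr)^{1/p'}\le\|\ell\|_{(U^p)^*}$ for every partition; applying it to partitions whose interior points all tend to $-\I$ shows $v|_\R$ has finite $p'$-variation, hence $v(-\I):=\lim_{s\to-\I}v(s)$ exists, and inserting one more point $s\to-\I$ upgrades the estimate to $\sum_{k=1}^K\|v(t_k)-v(t_{k-1})\|_{L^2_x}^{p'}\le\|\ell\|_{(U^p)^*}^{p'}$. Thus $v\in V^{p'}$ with $\|v\|_{V^{p'}}\le\|\ell\|_{(U^p)^*}$; since the displayed identity now reads exactly $\ell(a)=B(a,v)$ on atoms, continuity of both sides on $U^p$ together with the density of finite sums of atoms gives $\ell=T(v)$.

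I expect surjectivity to be the main obstacle, and within it the bookkeeping forced by the endpoint conventions of Definition~\ref{def_of_V} and of the $U^p$-atom definition: because atoms have $\phi_0=0$, the functional $T(v)$ never ``sees'' $v(-\I)$, so $v$ must be reconstructed solely from the data $\ell(\1_{[t,\I)}\cdot)$, $t\in\R$, together with the separate normalization $v(\I):=0$, and the existence of the limit of $v$ at $-\I$ (needed to place $v$ in $V^{p'}$) has to be extracted a posteriori from the finite-$p'$-variation bound rather than being built in. Everything else reduces to the same Hölder/atom computation already used for Proposition~\ref{Bform}.
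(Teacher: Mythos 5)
Your proposal is correct and follows essentially the argument the paper relies on: Proposition~\ref{prop_dual} is quoted from Hadac--Herr--Koch (Theorem~2.8 in \cite{HHK}), whose proof proceeds exactly as you do --- boundedness from Proposition~\ref{Bform}, the isometry via the dual step functions with weights $\|v(t_k)-v(t_{k-1})\|_{L^2_x}^{p'-1}$ and the identity $p(p'-1)=p'$, and surjectivity by Riesz representation applied to $\phi\mapsto\ell(\1_{[t,\I)}\phi)$ together with the finite $p'$-variation bound. The only point worth adding is that membership in $V^{p'}$ (Definition~\ref{def_of_V}) also requires the limit of $v$ at $+\I$, which follows from the same variation bound exactly as your argument at $-\I$ (cf.\ Proposition~\ref{embedding}~$(i)$).
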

\begin{prop}\label{prop_B}
Let $1<p<\infty$, $u\in V^1_{-}$ be absolutely continuous on compact intervals
and $v\in V^{p'}$. Then,
\EQQ{
B(u,v)=-\int_{-\infty}^\infty \LR{u'(t),v(t)}_{L^2}\, dt.
}
\end{prop}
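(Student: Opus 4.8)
\medskip
\noindent The plan is to convert $B_t(u,v)$, for a partition $t=\{t_i\}_{i=0}^I\in\mathcal{Z}$, into minus a Riemann sum for $\int_\R\LR{u'(s),v(s)}_{L^2_x}\,ds$ by summation by parts, to cancel the boundary terms using the conventions $u(-\I)=0$ (since $u\in V^1_-$) and $v(+\I)=0$, and then to check that $L:=-\int_\R\LR{u'(s),v(s)}_{L^2_x}\,ds$ satisfies the property characterizing $B(u,v)$ in Proposition \ref{Bform}; uniqueness then forces $B(u,v)=L$. First I would record the ingredients. Since $u\in V^1_-$ is absolutely continuous on compact intervals, its variation on $[-R,R]$ equals $\int_{-R}^R\|u'(s)\|_{L^2_x}\,ds\le\|u\|_{V^1}$, whence $u'\in L^1(\R;L^2_x)$ with $\|u'\|_{L^1_tL^2_x}\le\|u\|_{V^1}$; combined with $\lim_{t\to-\I}u(t)=0$ this gives $u(t)=\int_{-\I}^t u'(s)\,ds$ for every $t$. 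Likewise $v\in V^{p'}\subset L^\I(\R;L^2_x)$ with $\|v\|_{L^\I_tL^2_x}\le\|v\|_{V^{p'}}$ (test the $V^{p'}$-norm on $\{-\I,t,+\I\}$, using $v(+\I)=0$), and $v$ is regulated, hence has at most countably many discontinuities and is uniformly approximable by step functions on compact intervals; in particular $s\mapsto\LR{u'(s),v(s)}_{L^2_x}$ is integrable.

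Next, summation by parts in $B_t(u,v)=\sum_{i=1}^I\LR{u(t_{i-1}),v(t_i)-v(t_{i-1})}_{L^2_x}$ gives
\EQQS{
B_t(u,v)&=\LR{u(t_{I-1}),v(t_I)}_{L^2_x}-\LR{u(t_0),v(t_0)}_{L^2_x}\\
&\quad-\sum_{i=1}^{I-1}\LR{u(t_i)-u(t_{i-1}),v(t_i)}_{L^2_x}.
}
With $t_0=-\I$, $t_I=+\I$ the conventions $v(+\I)=0$, $u(-\I)=0$ annihilate the first two terms, and substituting $u(t_i)-u(t_{i-1})=\int_{t_{i-1}}^{t_i}u'(s)\,ds$ (valid for $i=1$ too, by $u(-\I)=0$) we get $B_t(u,v)=-\int_\R\LR{u'(s),w_t(s)}_{L^2_x}\,ds$, where $w_t(s):=v(t_i)$ for $s\in[t_{i-1},t_i)$, $1\le i\le I-1$, and $w_t:=0$ on $[t_{I-1},\I)$. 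Hence
\EQQS{
\bigl|B_t(u,v)-L\bigr|&\le\int_\R\|u'(s)\|_{L^2_x}\,\|v(s)-w_t(s)\|_{L^2_x}\,ds .
}

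Given $\e>0$ I would then construct the partition $t_\e$ from Proposition \ref{Bform} so that the right-hand side is $<\e$ for all refinements $t'\supset t_\e$: \textbf{(i)} put $\pm R$ into $t_\e$ with $R$ so large that $\|v\|_{L^\I_tL^2_x}\int_{|s|\ge R}\|u'(s)\|_{L^2_x}\,ds$ is small, which disposes of $\{|s|\ge R\}$ because there $\|v-w_{t'}\|_{L^2_x}\le 2\|v\|_{L^\I_tL^2_x}$ and $w_{t'}\equiv 0$ beyond the largest finite point of $t'$ (which is $\ge R$); \textbf{(ii)} on $[-R,R]$ choose $-R=s_0<\dots<s_m=R$ with $\sup_{x,y\in(s_{j-1},s_j)}\|v(x)-v(y)\|_{L^2_x}$ small (possible since $v$ is regulated), so that for $t'\supset t_\e$ and $s$ in $[-R,R]$ away from the last $t'$-subinterval preceding each $s_j$ one has $\|v(s)-w_{t'}(s)\|_{L^2_x}$ small; \textbf{(iii)} adjoin points $s_j-\rho$ with $\rho>0$ so small that $\int_{\bigcup_j[s_j-\rho,s_j]}\|u'(s)\|_{L^2_x}\,ds$ is negligible (possible since the $s_j$ are finitely many and $\|u'(\cdot)\|_{L^2_x}\in L^1$), which confines the exceptional set for any $t'\supset t_\e$ to $\bigcup_j[s_j-\rho,s_j)$ and controls it via the crude bound $\|v-w_{t'}\|_{L^2_x}\le 2\|v\|_{L^\I_tL^2_x}$. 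Adding the three contributions yields $|B_{t'}(u,v)-L|<\e$, and hence $B(u,v)=L$ by uniqueness.

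The only genuine obstacle is steps (ii)--(iii): since $v$ merely belongs to $V^{p'}$ and may be discontinuous, the step functions $w_t$ do \emph{not} converge to $v$ uniformly, so one cannot run a naive ``mesh $\to 0$'' argument uniformly over the refinements of a fixed partition. The remedy is to lean on $u'\in L^1$ (absolute continuity of the Lebesgue integral): the small left-neighbourhoods of the finitely many relevant jumps of $v$ carry negligible $\|u'\|_{L^2_x}\,ds$-mass, while away from them the oscillation of the regulated function $v$ is genuinely small. The summation by parts, the endpoint cancellation, and the tail estimate are routine.
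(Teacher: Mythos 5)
Your argument is correct, but it is not the route the paper takes: the paper offers no proof of this statement at all, deferring to Proposition 2.10 of \cite{HHK}, and the only related argument it writes out is the proof of Proposition \ref{prop_Bform2}, where the corresponding computation is exact because one factor is a $U^{p'}$-atom, hence a step function, so $B_{t'}$ can be evaluated directly on a common refinement with no limiting procedure. Your proof is a self-contained verification from the refinement characterization of $B$ in Proposition \ref{Bform}: the summation by parts, the cancellation of the boundary terms via $u(-\infty)=0$ and the convention $v(+\infty)=0$ from Definition \ref{def_of_V}, and the identity $B_{t'}(u,v)=-\int\LR{u'(s),w_{t'}(s)}_{L^2_x}ds$ are all sound, and your replacement for the missing uniform convergence of $w_{t'}$ to $v$ --- oscillation control of the regulated function $v$ on the open subintervals of a fixed partition of $[-R,R]$, combined with absolute continuity of the integral of $\|u'(\cdot)\|_{L^2_x}\in L^1$ to absorb the left neighbourhoods of the partition points and the tails --- is exactly the right mechanism; I checked that refinements $t'\supset t_\e$ indeed keep the right endpoint $t'_i$ of the subinterval containing $s$ inside the same oscillation interval once the points $s_j-\rho$ are adjoined. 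What your approach buys is independence from the external reference and a proof entirely inside the paper's framework (Proposition \ref{Bform} plus Definition \ref{def_of_V}), at the cost of the regulated-function bookkeeping that the atom case of Proposition \ref{prop_Bform2} avoids. Two small points you should state explicitly rather than leave implicit: the vector-valued fundamental theorem of calculus ($u$ absolutely continuous implies $u'$ exists a.e.\ and $u(b)-u(a)=\int_a^b u'$) uses that $L^2_x$ is a Hilbert space, hence has the Radon--Nikodym property (the paper uses the same fact silently in Proposition \ref{prop_Bform2}); and the strong measurability of $s\mapsto\LR{u'(s),v(s)}_{L^2_x}$, which follows since the regulated $v$ is a pointwise limit of step functions.
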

By Propositions  \ref{prop_dual}, \ref{prop_B}, we have the following proposition (see also Remark 2.11 in \cite{HHK}).
\begin{prop}  \label{U^2norm}
Let $u\in V^1_{-,rc} \sub U^2$ be absolutely continuous on compact intervals.
Then, $\|u\|_{U^2}=\ds \sup_{v \in V^2,\, \|v\|_{V^2}=1}\Bigl|\int_{-\I}^{\I}\LR{u'(t), v(t)}_{L^2_x} dt\Bigr|.$
\end{prop}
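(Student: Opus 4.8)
The plan is to deduce the identity from the abstract duality $(U^2)^*=V^2$ of Proposition \ref{prop_dual} together with the integral representation of $B(u,v)$ furnished by Proposition \ref{prop_B}. Since $U^2$ is a Banach space, the Hahn--Banach theorem gives, for every $u\in U^2$,
\EQQS{
 \|u\|_{U^2}=\sup_{f\in(U^2)^*,\ \|f\|_{(U^2)^*}\le 1}|f(u)|.
}
By Proposition \ref{prop_dual}, each such $f$ is of the form $f=B(\cdot,v)$ with $v\in V^2$ and $\|v\|_{V^2}=\|f\|_{(U^2)^*}$, and conversely every $v\in V^2$ produces such an $f$; hence
\EQQS{
 \|u\|_{U^2}=\sup_{v\in V^2,\ \|v\|_{V^2}\le 1}|B(u,v)|=\sup_{v\in V^2,\ \|v\|_{V^2}=1}|B(u,v)|,
}
where the last equality holds because $v\mapsto B(u,v)$ is linear and $V^2\neq\{0\}$.

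It remains to rewrite $B(u,v)$. The hypotheses of Proposition \ref{prop_B} (with $p=p'=2$) are exactly met here: $u\in V^1_{-,rc}\subset V^1_{-}$ is, by assumption, absolutely continuous on compact intervals, and $v$ ranges over $V^{p'}=V^2$. Therefore
\EQQS{
 B(u,v)=-\int_{-\I}^{\I}\LR{u'(t),v(t)}_{L^2_x}\,dt
}
for every $v\in V^2$, and taking absolute values removes the sign. Substituting into the previous display yields
\EQQS{
 \|u\|_{U^2}=\sup_{v\in V^2,\ \|v\|_{V^2}=1}\Bigl|\int_{-\I}^{\I}\LR{u'(t),v(t)}_{L^2_x}\,dt\Bigr|,
}
which is the claim.

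This argument is essentially bookkeeping, and I do not anticipate a genuine obstacle: the two substantive inputs, the isometric duality and the integral formula for $B$, are already available from \cite{HHK} through Propositions \ref{prop_dual} and \ref{prop_B}. The only points meriting a line of care are that $u$ really lies in $U^2$ (this is exactly the stated inclusion $V^1_{-,rc}\subset U^2$, itself contained in Proposition \ref{embedding}$(iii)$) and the harmless passage from the closed unit ball of $V^2$ to its unit sphere. If one preferred a self-contained derivation, the inequality ``$\le$'' could instead be obtained directly from Proposition \ref{Bform} applied to $B(u,v)$ with $p=p'=2$, and ``$\ge$'' by testing against near-optimal step functions $v$ built from an almost-optimal atomic decomposition of $u$; but routing the proof through Proposition \ref{prop_dual} is the shortest path.
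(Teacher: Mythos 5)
Your argument is correct and is exactly the route the paper takes: the paper derives this proposition directly from Propositions \ref{prop_dual} and \ref{prop_B}, i.e.\ the isometric duality $(U^2)^*=V^2$ combined with the integral representation $B(u,v)=-\int\LR{u'(t),v(t)}_{L^2_x}\,dt$, which is precisely what you spell out (with the Hahn--Banach norming step made explicit). No gaps; the passage from the unit ball to the unit sphere and the applicability of Proposition \ref{prop_B} with $p=p'=2$ are handled correctly.
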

By the proposition above, we immediately have the following corollary.
\begin{cor}  \label{U^2_A}
Let $A=S$ or $W_{\pm}$ and $u \in V^1_{-, rc, A} \subset U^2_A$ be absolutely continuous on compact intervals.
Then, 
\EQQS{
 \|u\|_{U^2_A} = \sup_{v \in V^2_A,\, \|v\|_{V^2_A}=1}
                       \Bigl|\int_{-\I}^{\I} \LR{A(t)\bigl(A(-\cdot )u\bigr)'(t), v(t)}_{L^2_x} dt\Bigr|.
}
\end{cor}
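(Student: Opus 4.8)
The plan is to deduce the identity directly from Proposition \ref{U^2norm} by conjugating with the unitary group $A(\cdot)$. Set $w:=A(-\cdot)u$. By the definitions of the spaces $U^2_A$ and $V^1_{-,rc,A}$ (namely $A(\pm\cdot)$ applied to $U^2$, resp.\ $V^1_{-,rc}$) we have $\|u\|_{U^2_A}=\|w\|_{U^2}$ and $w\in V^1_{-,rc}\subset U^2$; the hypothesis that $u$ be absolutely continuous on compact intervals is understood (equivalently) as the statement that $w$ is, so that $w'(t)=\bigl(A(-\cdot)u\bigr)'(t)$ is well defined for a.e.\ $t$ and $w$ meets all the hypotheses of Proposition \ref{U^2norm}.

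First I would apply Proposition \ref{U^2norm} to $w$, which gives
\[
 \|u\|_{U^2_A}=\|w\|_{U^2}=\sup_{v\in V^2,\ \|v\|_{V^2}=1}\Bigl|\int_{-\I}^{\I}\LR{w'(t),v(t)}_{L^2_x}\,dt\Bigr|.
\]
Next I would change the variable of the supremum. By the definition $\|\cdot\|_{V^2_A}=\|A(-\cdot)\,\cdot\,\|_{V^2}$, the map $v\mapsto \ti v:=A(\cdot)v$ is an isometric bijection from the unit sphere of $V^2$ onto the unit sphere of $V^2_A$, with inverse $\ti v\mapsto A(-\cdot)\ti v$. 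Writing $v(t)=A(-t)\ti v(t)$ and using that each $A(t)$ is unitary on $L^2_x$, so that $A(-t)^{*}=A(t)$, we obtain
\[
 \int_{-\I}^{\I}\LR{w'(t),v(t)}_{L^2_x}\,dt
  =\int_{-\I}^{\I}\LR{A(t)w'(t),\ti v(t)}_{L^2_x}\,dt
  =\int_{-\I}^{\I}\LR{A(t)\bigl(A(-\cdot)u\bigr)'(t),\ti v(t)}_{L^2_x}\,dt .
\]
Taking the supremum over $\ti v\in V^2_A$ with $\|\ti v\|_{V^2_A}=1$ then reproduces exactly the right-hand side of the corollary, which finishes the proof.

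The argument is essentially bookkeeping, so I do not expect a serious obstacle; the only points that need a moment of care are the reduction step — checking that the conjugate $w=A(-\cdot)u$ genuinely inherits membership in $V^1_{-,rc}\subset U^2$ together with absolute continuity on compact intervals, so that Proposition \ref{U^2norm} is literally applicable — and correctly tracking the adjoint identity $A(-t)^{*}=A(t)$ through the substitution. Everything else is a direct change of variables.
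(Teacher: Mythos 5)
Your proposal is correct and is exactly the argument the paper has in mind: the paper states the corollary as an immediate consequence of Proposition \ref{U^2norm}, and your conjugation $w=A(-\cdot)u$ together with the isometric change of variable $v\mapsto A(\cdot)v$ on the unit sphere (using unitarity of $A(t)$) is precisely that deduction. The only interpretive point, which you already flag, is that the absolute-continuity hypothesis is to be read on $A(-\cdot)u$ so that $(A(-\cdot)u)'$ makes sense and Proposition \ref{U^2norm} applies verbatim.
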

For the following remark, see Remark 2.12 in \cite{HHK}.
\begin{rem}  \label{V^2norm}
For $v\in V^2$, it holds that
\EQQ{
\|v\|_{V^2}=\sup _{u; U^2\text{-atom}} |B(u,v)|.
}
\end{rem}
\begin{prop}\label{prop_Bform2}
Let $1<p<\I, v\in V^1_-$ be absolutely continuous on compact intervals and $u$ be a $U^{p'}$-atom. Then,
\EQ{
B(u,v)=\int_{-\I}^\I \LR{u(t), v'(t)}_{L^2_x}dt-\lim_{t\to\I}\LR{u(t),v(t)}_{L^2_x}.  \label{dual_eq1}
}
\end{prop}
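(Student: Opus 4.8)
The plan is to establish the identity in Proposition~\ref{prop_Bform2} by reducing it to Proposition~\ref{prop_B} together with the definition of $B$ from Proposition~\ref{Bform}. First I would recall that $u$, being a $U^{p'}$-atom, is a step function $u=\sum_{k=1}^K \1_{[t_{k-1},t_k)}\phi_{k-1}$ associated with some partition $\{t_k\}_{k=0}^K\in\mathcal{Z}$, so $u$ is constant on each subinterval, right-continuous, and satisfies $u(t)=0$ for $t<t_1$ and $u(t)=0$ for $t\ge t_K=\infty$; in particular $u\in V^{p'}$ with $\|u\|_{V^{p'}}\le 1$, so $B(u,v)$ is well-defined by Proposition~\ref{Bform} (applied with the roles arranged so that $u\in U^{p'}\subset V^{p'}$ and $v\in V^1_-\subset V^{(p')'}=V^p$; note $V^1_-\subset V^p$ by Proposition~\ref{embedding}~(iv)).

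Next, since $u$ is a step function, for any partition $t'=\{s_j\}$ refining $\{t_k\}$ the Riemann--Stieltjes-type sum $B_{t'}(u,v)=\sum_j\LR{u(s_{j-1}),v(s_j)-v(s_{j-1})}_{L^2_x}$ collapses, by telescoping on each interval where $u$ is constant, to $\sum_{k=1}^{K-1}\LR{\phi_{k-1},v(t_k)-v(t_{k-1})}_{L^2_x}$ (using $\phi_0=0$ and that $u$ vanishes past $t_{K-1}$ in the convention with $v(\infty):=0$ one must be slightly careful with the last interval). Because this value is already independent of the refinement, the limiting value $B(u,v)$ from Proposition~\ref{Bform} equals exactly this finite sum. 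Now I would integrate by parts on each interval $[t_{k-1},t_k)$: since $v$ is absolutely continuous on compact intervals, $\LR{\phi_{k-1},v(t_k)-v(t_{k-1})}_{L^2_x}=\int_{t_{k-1}}^{t_k}\LR{\phi_{k-1},v'(t)}_{L^2_x}\,dt=\int_{t_{k-1}}^{t_k}\LR{u(t),v'(t)}_{L^2_x}\,dt$. Summing over $k$ gives $\int_{-\infty}^{\infty}\LR{u(t),v'(t)}_{L^2_x}\,dt$ minus the boundary contribution from the final interval, where the convention $v(\infty)=0$ is replaced by the genuine limit $\lim_{t\to\infty}v(t)$; tracking that one endpoint term carefully produces exactly the correction $-\lim_{t\to\infty}\LR{u(t),v(t)}_{L^2_x}$, which is the asserted formula~\eqref{dual_eq1}.

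The main obstacle I anticipate is the bookkeeping at $t=\infty$: the definition of $B$ and of the $V^p$ norm uses the convention $v(\infty):=0$, which in general differs from $\lim_{t\to\infty}v(t)$, and $u$ as a $U^{p'}$-atom is extended by $0$ on $[t_{K-1},\infty)$ — but this only matters on the last subinterval. The honest way to handle it is to approximate: for each $R$ large, replace the half-line $[t_{K-1},\infty)$ by $[t_{K-1},R)$, perform the exact finite telescoping and integration by parts on $[-\infty,R]$ where everything is clean, obtaining $\int_{-\infty}^{R}\LR{u(t),v'(t)}_{L^2_x}\,dt-\LR{u(R),v(R)}_{L^2_x}$ plus the contribution of $v(t_{K-1})$ terms, and then let $R\to\infty$. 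Absolute continuity of $v$ on compacts plus $v\in V^1_-$ (hence $v'\in L^1$) guarantees that $\int_{-\infty}^{R}\to\int_{-\infty}^{\infty}$ and that $\lim_{t\to\infty}v(t)$ exists, so $\LR{u(R),v(R)}_{L^2_x}\to\lim_{t\to\infty}\LR{u(t),v(t)}_{L^2_x}$, and the formula follows. The only other point requiring a line of care is checking that the approximating finite sums do converge to $B(u,v)$ as defined in Proposition~\ref{Bform}, which is immediate since $u$ being a step function makes $B_{t'}(u,v)$ eventually constant in the refinement $t'$.
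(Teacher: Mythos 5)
Your overall strategy is the one the paper itself uses: observe that $v\in V^1_-\subset V^p$ (Proposition \ref{embedding} (iv)) so that $B(u,v)$ is defined, note that $B_{t'}(u,v)$ is constant over partitions $t'$ refining the atom's partition and telescopes to a finite sum, and then convert that sum via the fundamental theorem of calculus (using absolute continuity and $v'\in L^1(\R;L^2_x)$) into the integral plus a boundary contribution from the terminal half-line, where the convention $v(\I):=0$ differs from $\lim_{t\to\I}v(t)$; your $R\to\I$ truncation is just a mild variant of the paper's direct treatment of the subinterval with right endpoint $\I$.

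There is, however, one concrete error in your bookkeeping, and it sits exactly at the point you flag as delicate. A $U^{p'}$-atom $u=\sum_{k=1}^{K}\1_{[t_{k-1},t_k)}\phi_{k-1}$ does \emph{not} vanish past $t_{K-1}$ and is \emph{not} ``extended by $0$ on $[t_{K-1},\I)$'': it equals $\phi_{K-1}$ there (it vanishes on the \emph{first} interval, because $\phi_0=0$). Consequently the collapsed sum is not $\sum_{k=1}^{K-1}\LR{\phi_{k-1},v(t_k)-v(t_{k-1})}_{L^2_x}$ but
\[
\sum_{k=2}^{K-1}\LR{\phi_{k-1},\,v(t_k)-v(t_{k-1})}_{L^2_x}\;-\;\LR{\phi_{K-1},\,v(t_{K-1})}_{L^2_x},
\]
the last term coming from the subinterval $[t_{K-1},\I)$ together with the convention $v(\I):=0$. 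It is precisely this term that, after writing $v(t_{K-1})=v(R)-\int_{t_{K-1}}^{R}v'(t)\,dt$ and letting $R\to\I$, produces both the tail integral $\int_{t_{K-1}}^{\I}\LR{u,v'}_{L^2_x}\,dt$ and the correction $-\lim_{t\to\I}\LR{u(t),v(t)}_{L^2_x}$; your limit $\LR{u(R),v(R)}_{L^2_x}\to\lim_{t\to\I}\LR{u(t),v(t)}_{L^2_x}$ has content only because $u(R)=\phi_{K-1}$ there. If your two claims about $u$ vanishing on the last interval were taken literally, the $k=K$ term would be absent, you would obtain $B(u,v)=\int_{-\I}^{t_{K-1}}\LR{u,v'}_{L^2_x}\,dt$ with an identically zero boundary term, which is not \eqref{dual_eq1} unless $\phi_{K-1}=0$ (and Corollary \ref{V^2_A}, which needs the limit term, would lose its content). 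Delete those two claims and carry the $k=K$ term through your $R$-truncation; with that repair your argument is complete and coincides with the paper's proof.
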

\begin{proof}
By Proposition \ref{embedding} $(iv)$, we have $v\in V^p$.
Therefore, the left-hand side of \eqref{dual_eq1} makes sense.
From our assumption, it follows that $v'\in L^1(\R;L^2_x)$ with $\|v'\|_{L^1(\R;L^2_x)}\le \|v\|_{V^1}<\I$ and
$$u=\sum_{k=1}^K \1_{[t_{k-1},t_k)}\phi_{k-1}$$
with $\{t_k\}_{k=0}^K\in \mathcal{Z}$, $\{\phi_k\}_{k=0}^{K-1}\subset L^2_x$, $\sum_{k=0}^{K-1}\|\phi_k\|_{L^2_x}^{p'}= 1$ and $\phi_0=0$.
By the definition of $B$, for any $\e>0$, there exists $\ti{t}=\{\ti{t}_k\}_{k=0}^N\in \mathcal{Z}$
such that for any $\mathcal{Z} \ni t'=\{t'_k\}_{k=0}^M\supset \ti{t}$ the estimate
\EQQ{
|B_{t'}(u,v)-B(u,v)|<\e
}
holds where
\EQQ{
B_{t'}(u,v)=\sum_{k=1}^M \LR{u(t'_{k-1}),v(t'_k)-v(t'_{k-1})}_{L^2_x}.
}
Put $t'=\{t_k\}_{k=0}^K \cup \{\ti{t}_k\}_{k=0}^N$.
Since $u(s)=u(t'_{n-1})$ on $s\in [t'_{n-1},t'_n)$, we have
\EQQ{
\LR{u(t'_{n-1}),v(t'_n)-v(t'_{n-1})}_{L^2_x}=\int_{t'_{n-1}}^{t'_n} \LR{u(s),v'(s)}_{L^2_x} ds
}
when $t_n'\neq \I$ and
\EQQ{
\LR{u(t'_{n-1}),v(t'_n)-v(t'_{n-1})}_{L^2_x}
&=\lim_{t\to\I}\LR{u(t'_{n-1}),v(t)-v(t'_{n-1})}_{L^2_x}-\lim_{t\to\I}\LR{u(t'_{n-1}),v(t)}_{L^2_x} \\
&=\int_{t'_{n-1}}^{t'_n} \LR{u(s),v'(s)}_{L^2_x} ds-\lim_{t\to\I}\LR{u(t),v(t)}_{L^2_x}
}
when $t_n' = \I$.
Thus, we conclude
\EQQ{
\Bigl|\int_{-\I}^{\I} \LR{u(s),v'(s)}_{L^2_x} ds-\lim_{t\to\I}\LR{u(t),v(t)}_{L^2_x}-B(u,v)\Bigr|<\e.
}
\end{proof}
Combining Remark \ref{V^2norm} and Proposition \ref{prop_Bform2}, we have the following corollary.
\begin{cor}  \label{V^2_A}
Let $A=S$ or $W_{\pm}$ and $v \in V^1_{-,\, A} \subset V^2_{-,\, A}$ be absolutely continuous on compact intervals.
Then, 
\EQQS{
 \|v\|_{V^2_A} \le \sup_{u \in U^2_A,\, \|u\|_{U^2_A}=1}
                       \Bigl|\int_{-\I}^{\I} \LR{u(t), A(t)\bigl(A(-\cdot )v\bigr)'(t)}_{L^2_x} dt 
                         - \lim_{t \to \I} \LR{u(t), v(t)}_{L^2_x}\Bigr|.
}
\end{cor}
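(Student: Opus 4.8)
The plan is to transfer the statement to the untwisted $U^2,V^2$ spaces by conjugating with the unitary group $A$, and then to read it off from Remark~\ref{V^2norm} together with Proposition~\ref{prop_Bform2}. Write $w:=A(-\cdot)v$. By the definitions of $V^2_A$ and $V^1_{-,A}$ one has $w\in V^1_-\subset V^2$ and $\|v\|_{V^2_A}=\|w\|_{V^2}$; moreover the hypothesis that $v$ is absolutely continuous on compact intervals is read (exactly as in Corollary~\ref{U^2_A}) as: $w$ is absolutely continuous on compact intervals, so that $w'=(A(-\cdot)v)'$ exists a.e.\ and $w'\in L^1(\R;L^2_x)$ with $\|w'\|_{L^1_tL^2_x}\le\|w\|_{V^1}<\I$.

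First I would apply Remark~\ref{V^2norm} to $w\in V^2$, obtaining $\|v\|_{V^2_A}=\|w\|_{V^2}=\sup_{u'}|B(u',w)|$, the supremum being over all $U^2$-atoms $u'$. Since $w\in V^1_-$ is absolutely continuous on compact intervals, Proposition~\ref{prop_Bform2} with $p=p'=2$ applies to every such $u'$ and yields
\[
 B(u',w)=\int_{-\I}^{\I}\LR{u'(t),w'(t)}_{L^2_x}\,dt-\lim_{t\to\I}\LR{u'(t),w(t)}_{L^2_x}.
\]

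Next I would undo the conjugation. Given a $U^2$-atom $u'$, set $u:=A(\cdot)u'$; then $u\in U^2_A$ with $\|u\|_{U^2_A}=\|u'\|_{U^2}\le1$ and $u'(t)=A(-t)u(t)$. Using that $A(-t)$ is unitary on $L^2_x$ with $A(-t)^*=A(t)$, one computes $\LR{u'(t),w'(t)}_{L^2_x}=\LR{u(t),A(t)(A(-\cdot)v)'(t)}_{L^2_x}$ and $\LR{u'(t),w(t)}_{L^2_x}=\LR{A(-t)u(t),A(-t)v(t)}_{L^2_x}=\LR{u(t),v(t)}_{L^2_x}$, so that
\[
 |B(u',w)|=\Bigl|\int_{-\I}^{\I}\LR{u(t),A(t)(A(-\cdot)v)'(t)}_{L^2_x}\,dt-\lim_{t\to\I}\LR{u(t),v(t)}_{L^2_x}\Bigr|.
\]
As $u'$ ranges over all $U^2$-atoms the associated $u=A(\cdot)u'$ ranges over a subset of the unit ball of $U^2_A$, and the supremum over that ball equals (by homogeneity in $u$) the supremum over $\{\|u\|_{U^2_A}=1\}$; taking suprema yields the asserted inequality.

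The only point that needs care is the endpoint bookkeeping at $+\I$: one has to verify that the convention $w(+\I):=0$ from Definition~\ref{def_of_V} is respected when passing between $B(u',w)$ and its integral representation, and that $\lim_{t\to\I}\LR{u(t),v(t)}_{L^2_x}$ genuinely exists ($u\in U^2_A\subset L^\I_tL^2_x$ and $v\in V^1_{-,A}$ both have limits as $t\to\I$ after conjugation by $A$). This is precisely what Proposition~\ref{prop_Bform2} is set up to provide, so no new estimate is required and everything else is a routine transcription.
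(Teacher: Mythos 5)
Your proposal is correct and follows essentially the same route as the paper, which obtains the corollary precisely by combining Remark \ref{V^2norm} with Proposition \ref{prop_Bform2}; your only addition is to spell out the routine conjugation $w=A(-\cdot)v$, $u=A(\cdot)u'$ and the homogeneity step passing from atoms (norm $\le 1$) to the unit sphere, which is harmless since only the inequality $\le$ is claimed.
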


\begin{prop} \label{modulation}
We have 
\EQS{
&\| Q_M^S u\|_{L^2_{t,x}(\R^{1+d})}\lec M^{-1/2}\| u\|_{V^2_S},\ \ \ \ \ \| Q_{\ge M}^S u\|_{L^2_{t,x}(\R^{1+d})}\lec M^{-1/2}\| u\|_{V^2_S}, \label{mod} \\
  &\| Q_{<M}^S u\|_{V^2_S}\lec \| u\|_{V^2_S},\ \ \ \ \ \| Q_{\ge M}^S u\|_{V^2_S}\lec \| u\|_{V^2_S}, \notag \\
&\| Q_{<M}^S u\|_{U^2_S}\lec \| u\|_{U^2_S},\ \ \ \ \ \| Q_{\ge M}^S u\|_{U^2_S}\lec \| u\|_{U^2_S}. \notag 
}
The same estimates hold by replacing the Schr\"{o}dinger operator $S$ with the wave operators $W_{\pm}.$  
\end{prop}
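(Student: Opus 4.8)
### Plan for the Proof of Proposition \ref{modulation}

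The plan is to establish the six estimates of Proposition \ref{modulation} in two groups: first the $L^2_{t,x}$ smoothing estimate \eqref{mod}, which is the only genuinely quantitative statement, and then the boundedness of the Littlewood--Paley-type projectors $Q_{<M}^S$ and $Q_{\ge M}^S$ on $V^2_S$ and $U^2_S$, which are softer. Throughout I would work with the conjugated functions $v := S(-\cdot)u$, so that $\| u\|_{V^2_S} = \| v\|_{V^2}$, $\| u\|_{U^2_S}=\| v\|_{U^2}$, and the projector $Q^S_M$ acts as $S(\cdot)\, Q_M\, S(-\cdot)$, i.e. it multiplies $\widehat{v}(\tau,\xi)$ by $\phi(\tau/M)$ in the time variable only. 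Hence all statements reduce to estimates for the plain Fourier multiplier $Q_M$ (and $Q_{\ge M}$, $Q_{<M}$) acting on $V^2$ and $U^2$, which is exactly the setting of the modulation lemmas in \cite{HHK}; I would cite those where convenient but sketch the arguments for completeness.

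For the first estimate in \eqref{mod}, I would first prove it for a single $U^2$-atom and then pass to $V^2$. For a step function $v=\sum_k \1_{[t_{k-1},t_k)}\phi_{k-1}$ one computes $\F_t v$ explicitly: each characteristic function contributes a factor whose Fourier transform in $\tau$ is $O(\min(|t_k-t_{k-1}|, |\tau|^{-1}))$, and after applying $Q_M$ (which localizes to $|\tau|\sim M$) and Plancherel in $t$, the $L^2_{t,x}$ norm is controlled by $M^{-1/2}$ times the $\ell^2$-sum of the $\|\phi_k\|_{L^2_x}$; since that sum is $1$ for an atom, we get $\| Q_M^S a\|_{L^2_{t,x}}\lec M^{-1/2}$. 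This does NOT immediately sum over atoms with an $\ell^1$ bound to give the $U^2$ estimate with the right constant, so the cleaner route — and the one that directly yields the $V^2$ statement — is the duality characterization: by Remark \ref{V^2norm} and the atomic estimate, together with the fact that $Q_M$ is self-adjoint and commutes with the relevant structure, one transfers the bound from atoms to all of $V^2$. Concretely, $\|Q_M^S u\|_{L^2_{t,x}} = \sup \{ |\langle Q_M^S u, w\rangle| : \|w\|_{L^2_{t,x}}=1\}$, and pairing against $w$ one writes $\langle Q_M^S u, w\rangle = B(\,\cdot\,, \text{something})$ after integrating by parts in $t$, using that $Q_M^S u$ is smooth in $t$ with time-derivative of size $\lec M$. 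The $Q_{\ge M}^S$ version then follows by a dyadic sum: $\| Q_{\ge M}^S u\|_{L^2_{t,x}}^2 = \sum_{N\ge M}\|Q_N^S u\|_{L^2_{t,x}}^2$ (almost-orthogonality of the $Q_N^S$ in $L^2_{t,x}$) $\lec \sum_{N\ge M} N^{-1}\| u\|_{V^2_S}^2 \lec M^{-1}\|u\|_{V^2_S}^2$, since the $\|u\|_{V^2_S}$ on the right is the same for every $N$.

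For the projector bounds, the cheapest argument uses duality and the known $U^2$--$V^2$ pairing. On $U^2$, boundedness of $Q_{<M}$ and $Q_{\ge M}$ follows by testing on atoms: applying a time-frequency cutoff to a step function and checking that the result has $U^2$-norm $\lec 1$ — this is a one-dimensional Fourier-multiplier fact (the multiplier $\phi(\tau/M)$, or $\psi$-type tail, is the Fourier transform of an $L^1$-normalized kernel, so convolving a step function against it and re-expanding into atoms costs only a bounded factor). On $V^2$, one uses Proposition \ref{prop_dual}: $\|Q_{\ge M}^S u\|_{V^2_S} = \sup_{\|w\|_{U^2_S}=1}|B(w, Q_{\ge M}^S u)|$, and since $Q_{\ge M}$ is essentially self-adjoint with respect to $B$ (up to harmless boundary terms, controlled because these functions are smooth in $t$), this equals $\sup |B(Q_{\ge M}^S w, u)| \le \|Q_{\ge M}^S w\|_{U^2_S}\|u\|_{V^2_S}\lec \|u\|_{V^2_S}$ by the already-established $U^2$ bound. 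The same works for $Q_{<M}^S = \mathrm{Id} - Q_{\ge M}^S$.

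The main obstacle I anticipate is the first inequality in \eqref{mod} at the level of $V^2$ rather than $U^2$: the atomic decomposition gives an $\ell^1$ rather than $\ell^2$ structure, so one cannot simply insert the $\ell^2$-type $V^2$ norm, and one must go through the duality/integration-by-parts argument, being careful about (i) the boundary term at $t=+\infty$ coming from the convention $v(\infty)=0$ and the non-continuity of $V^2$ functions there, and (ii) justifying that $Q_M^S u$ is regular enough in $t$ for the integration by parts — this is where one uses that $Q_M$ has a Schwartz kernel in time decaying on scale $M^{-1}$, so $\partial_t Q_M^S u$ is a nice $L^1_t L^2_x$ object with norm $\lec M\|u\|_{L^\infty_t L^2_x}\lec M\|u\|_{V^2_S}$. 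Once those technical points are handled the estimate closes with the stated $M^{-1/2}$ loss. The remaining statements are then essentially formal consequences via duality, as sketched above.
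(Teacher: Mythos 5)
The paper does not actually prove Proposition \ref{modulation}: it cites Corollary 2.18 of \cite{HHK}, so your attempt has to be measured against the standard argument there. Your reduction by conjugation with $S(-\cdot)$, the almost-orthogonal summation over $N\ge M$ to pass from $Q^S_N$ to $Q^S_{\ge M}$, and the observation that $Q_{<M}$ is convolution in $t$ with an $L^1$-normalized kernel (hence bounded on $U^2$ and $V^2$ by translation invariance of these norms plus Minkowski's integral inequality, with $Q_{\ge M}=\mathrm{Id}-Q_{<M}$) are sound and essentially what \cite{HHK} does for the last four bounds; the convolution route also makes your ``$Q_{\ge M}$ is self-adjoint with respect to $B$ up to boundary terms controlled by smoothness'' step unnecessary, which is good because atoms are step functions and that justification is not available.

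The genuine gap is in the first estimate of \eqref{mod} at the $V^2$ level, which is exactly the part the paper uses quantitatively. Your transfer ``atom bound plus duality'' goes the wrong way: $\|Q_M a\|_{L^2_{t,x}}\lec M^{-1/2}$ for $U^2$-atoms says $Q_M:U^2\to L^2_{t,x}$ has norm $\lec M^{-1/2}$, whose dual statement is $\|Q_M w\|_{V^2}\lec M^{-1/2}\|w\|_{L^2_{t,x}}$, not the desired $\|Q_M v\|_{L^2_{t,x}}\lec M^{-1/2}\|v\|_{V^2}$. If you make your pairing precise, writing $\langle Q_M v,w\rangle_{t,x}=\int\langle v(t),Q_M w(t)\rangle_{L^2_x}dt=-B(u,v)$ with $u(t)=\int_{-\infty}^t Q_M w\,ds$, the bound you need is $\|u\|_{U^2}\lec M^{-1/2}\|w\|_{L^2_{t,x}}$, which the atom computation does not give and which, by $(U^2)^*=V^2$, is essentially equivalent to the estimate being proved; so the argument is circular unless a new ingredient is added (for instance a Bernstein-type bound $\|u\|_{U^2}\lec M^{1/2}\|u\|_{L^2_{t,x}}$ for temporal Fourier support in $|\tau|\sim M$). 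The auxiliary claim that $\partial_t Q^S_Mu$ lies in $L^1_tL^2_x$ with norm $\lec M\|u\|_{L^\infty_tL^2_x}$ is false (there is no decay in $t$), and a bound growing in $M$ could not produce the $M^{-1/2}$ gain anyway. Even your atom computation is incomplete: controlling $\|\sum_k c_k(\tau)\phi_{k-1}\|_{L^2_x}$ by Cauchy--Schwarz in $k$ costs a factor depending on the number of steps, so the uniform atom bound already needs an almost-orthogonality argument. The missing idea, which is the heart of Corollary 2.18 in \cite{HHK}, is the modulus-of-continuity estimate $\|v-v(\cdot-s)\|_{L^2_tL^2_x}\le |s|^{1/2}\|v\|_{V^2}$, proved by splitting $\R$ into translates of a grid of mesh $|s|$ and using the definition of the $V^2$ norm; then $Q_{\ge M}v(t)=\int K_M(s)\bigl(v(t)-v(t-s)\bigr)ds$ with $K_M=MK(M\cdot)$, $\int K=1$, and Minkowski gives $\|Q_{\ge M}v\|_{L^2_{t,x}}\lec \|v\|_{V^2}\int |K_M(s)||s|^{1/2}ds\lec M^{-1/2}\|v\|_{V^2}$, while $Q_M=Q_MQ_{\ge M/4}$ and $L^2$-boundedness of $Q_M$ give the single-frequency version. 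With that ingredient inserted, the remainder of your outline closes.
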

For the proof of Proposition \ref{modulation}, see Corollary 2.18 in ~\cite{HHK}. 

The following lemma plays an important role to estimate the nonlinear terms.
The symbol $\ta+|\xi|^2$ (resp.~$\ta \pm |\xi|$) comes from the linear part of Schr\"odinger equation (resp.~the wave equation).
If we define $M$ as the left-hand side of (\ref{recover}), one derivative loss is recovered by Lemma \ref{Recovery} and \eqref{mod} in Proposition \ref{modulation}.

\begin{lem} \label{Recovery}
Let $\ta_3=\ta_1-\ta_2,\ \xi_3=\xi_1-\xi_2.$ 
If $|\xi_1| \gg \LR{\xi_2}$ or $\LR{\xi_1} \ll |\xi_2|,$ then it holds that 
\EQS{ \label{recover}
\max\big\{ \big|\ta_1+|\xi_1|^2\big|, \big|\ta_2+|\xi_2|^2\big|, \big|\ta_3 \pm |\xi_3| \big|  \big\} 
 \gec \max\{|\xi_1|^2, |\xi_2|^2\}.
} 
\end{lem}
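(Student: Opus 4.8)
The plan is to reduce the claimed bound to an elementary estimate on the ``resonance function'' obtained by adding up the three modulations, and then to observe that when one of $|\x_1|,|\x_2|$ dominates, the difference $\big||\x_1|^2-|\x_2|^2\big|$ is quadratic in the large frequency while $|\x_3|$ is only linear in it, so the latter cannot cancel the former.

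First I would record the algebraic identity
\EQQ{
\big(\ta_1+|\x_1|^2\big) - \big(\ta_2+|\x_2|^2\big) - \big(\ta_3 \pm |\x_3|\big) \ = \ |\x_1|^2 - |\x_2|^2 \mp |\x_3|,
}
valid because $\ta_1-\ta_2-\ta_3=0$ by hypothesis. By the triangle inequality this gives
\EQQ{
\max\big\{ \big|\ta_1+|\x_1|^2\big|, \big|\ta_2+|\x_2|^2\big|, \big|\ta_3 \pm |\x_3| \big| \big\} \ \ge \ \tfrac13 \big| |\x_1|^2 - |\x_2|^2 \mp |\x_3| \big|,
}
so it is enough to prove $\big| |\x_1|^2 - |\x_2|^2 \mp |\x_3| \big| \gec \max\{|\x_1|^2, |\x_2|^2\}$ under the frequency-separation hypothesis.

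Next I would split into the two cases, which are symmetric; interpret $A\gg B$ (resp.\ $A\ll B$) as $A\ge C_0 B$ (resp.\ $C_0A\le B$) for a large absolute constant $C_0$. Suppose $|\x_1|\gg\LR{\x_2}$. Since $\LR{\x_2}\ge\max\{1,|\x_2|\}$, this forces $|\x_1|\ge C_0$ and $|\x_2|\le|\x_1|/C_0$, hence $\max\{|\x_1|^2,|\x_2|^2\}=|\x_1|^2$ and $|\x_1|^2-|\x_2|^2\ge(1-C_0^{-2})|\x_1|^2\ge 0$; for either sign one therefore has $\big||\x_1|^2-|\x_2|^2\mp|\x_3|\big|\ge|\x_1|^2-|\x_2|^2-|\x_3|$. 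Using $|\x_3|=|\x_1-\x_2|\le|\x_1|+|\x_2|\le(1+C_0^{-1})|\x_1|\le(1+C_0^{-1})C_0^{-1}|\x_1|^2$ (the last step since $|\x_1|\ge C_0$), I obtain
\EQQ{
\big| |\x_1|^2 - |\x_2|^2 \mp |\x_3| \big| \ \ge \ \big(1 - C_0^{-1} - 2C_0^{-2}\big)|\x_1|^2 \ \ge \ \tfrac12 |\x_1|^2
}
once $C_0$ is taken large enough (say $C_0=4$). The case $\LR{\x_1}\ll|\x_2|$ is identical after exchanging $\x_1$ and $\x_2$, and yields $\big||\x_1|^2-|\x_2|^2\mp|\x_3|\big|\ge\tfrac12|\x_2|^2$. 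Combining the two cases with the reduction above gives \eqref{recover}.

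I do not anticipate a real obstacle here: the statement is essentially elementary, and the only delicate points are bookkeeping ones --- fixing the meaning of $\gg$, $\ll$ so the constants align, verifying that the estimate is uniform in the sign of $\ta_3\pm|\x_3|$ (which is why one works with the lower bound $|\x_1|^2-|\x_2|^2-|\x_3|$), and passing between $\LR{\x_j}$ and $|\x_j|$ using $\LR{\x_j}\ge 1$. The one-line summary is that the dominated frequency being small forces $|\x_3|\lec\max\{|\x_1|,|\x_2|\}$, which is negligible against the quadratic gap $\big||\x_1|^2-|\x_2|^2\big|\sim\max\{|\x_1|^2,|\x_2|^2\}$.
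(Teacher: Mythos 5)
Your proposal is correct and follows essentially the same route as the paper: the same algebraic identity $(\ta_1+|\x_1|^2)-(\ta_2+|\x_2|^2)-(\ta_3\pm|\x_3|)=|\x_1|^2-|\x_2|^2\mp|\x_3|$, the same triangle-inequality reduction of the maximum to this quantity, and the same observation that under the frequency separation the linear term $|\x_3|\lec\max\{|\x_1|,|\x_2|\}$ cannot cancel the quadratic gap. Your version merely makes the implicit constants and the second (symmetric) case explicit, which the paper leaves to the reader.
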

\begin{proof}
We only prove the case of $|\xi_1| \gg \LR{\xi_2}$. By triangle inequality, $\ta_3=\ta_1-\ta_2$ and $\xi_3 = \xi_1 - \xi_2$, we have  
\EQS{ 
 (LHS\ of\ \eqref{recover}) &\gec \bigl|\ta_1 + |\xi_1|^2\bigr| + \bigl|\ta_2 + |\xi_2|^2\bigr| 
                                              + \bigl|\ta_3 \pm |\xi_3|\bigr| \notag \\
                                    &\ge  \bigl|\ta_1 + |\xi_1|^2 - (\ta_2 + |\xi_2|^2) - (\ta_3 \pm |\xi_3|)\bigr| \notag \\
                                    &=     \bigl| |\xi_1|^2 - |\xi_2|^2 \mp |\xi_1 - \xi_2| \bigr|.   \label{rec1}
}
Since $|\xi_1| \gg \LR{\xi_2}$, we see that $|\xi_1 - \xi_2| \sim |\xi_1|$. 
Hence 
\EQQS{ 
 \eqref{rec1}
                 \gec |\xi_1|^2.
}
\end{proof}
We define the Duhamel terms as follows.
\begin{defn}
\EQS{
 &I_{T,S}(n,v)(t) :=-i/2\int_0^t\1_{[0,\, T]}(t')S(t-t') n(t') v(t')\, dt', \label{DuS} \\
 &I_{T,\, W_{\pm}}(u,v)(t) := \pm \int_0^t \1_{[0,T]}(t')W_{\pm}(t-t')\, \om  \big(u(t') \bar{v}(t')\big)\, dt' \label{DuW}
}
where $\om = (-\laplacian)^{1/2}.$
\end{defn}
The following statement is the Strichartz estimate for the Schr\"{o}dinger equation. 
\begin{prop}  \label{Strichartz}
Let $d\ge 3$ and $(p_1, q_1),\ (p_2, q_2)$ satisfy 
$2\le q_i \le 2d/(d-2)$ and $2/p_i=d(1/2-1/q_i)$ for $i=1,2$. 
$\ p_2',\ q_2'$ satisfy 
$1/p_2+1/p_2'=1,\ 1/q_2+1/q_2'=1$. 
Then, it holds that
\EQS{
 &\| S(t)f\|_{L^{p_i}_t L^{q_i}_x} \lec \| f\|_{L^2_x}, \ \ \ i=1, 2, \label{Str1} \\
 &\Bigl{\|}\int_{0}^t S(t-t') g(t')dt'\Bigr{\|}_{L^{p_1}_t L^{q_1}_x} \lec \|g\|_{L^{p_2'}_t L^{q_2'}_x}.   \label{Str2}
}
Moreover, by duality, we have 
\EQQS{
 \|I_{T,S}(n,v)(t)\|_{L^{p_1}_t L^{q_1}_x} \lec \sup_{\|u\|_{L^{p_2}_t L^{q_2}_x}=1} 
                                                       \Bigl|\int_{\R} \int_{\R^d} \1_{[0,T]}nv\bar{u} dxdt \Bigr|.
}
\end{prop}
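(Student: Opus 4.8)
The plan is to deduce all three assertions from the classical Strichartz machinery for the free Schr\"odinger group. First I would establish the homogeneous estimate \eqref{Str1}. Starting from the dispersive bound $\|S(t)\|_{L^1_x \to L^\infty_x} \lesssim |t|^{-d/2}$, which follows from the explicit convolution kernel of $e^{it\laplacian}$, together with the $L^2_x$-unitarity of $S(t)$, Riesz--Thorin interpolation yields $\|S(t)g\|_{L^q_x} \lesssim |t|^{-d(1/2-1/q)}\|g\|_{L^{q'}_x}$ for $2 \le q \le \infty$. A $TT^*$ argument then reduces \eqref{Str1} to the boundedness of $g \mapsto \int_{\R} S(t-s)g(s)\,ds$ from $L^{p'}_t L^{q'}_x$ into $L^p_t L^q_x$; for the non-endpoint admissible pairs, that is, $2 < q < 2d/(d-2)$, this is precisely the Hardy--Littlewood--Sobolev inequality in the time variable, since then $d(1/2-1/q) = 2/p < 1$. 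The remaining pair $(p,q) = (2, 2d/(d-2))$ is the Keel--Tao endpoint: fractional integration is no longer available, and one instead runs the dyadic-in-time decomposition together with a bilinear real interpolation between the trivial $L^2 \times L^2$ bound and the dispersive $L^1 \times L^\infty$ bound. The hypothesis $d \ge 3$ is what makes $2d/(d-2)$ a finite exponent, so this endpoint pair makes sense.

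Next I would treat the inhomogeneous estimate \eqref{Str2}. The non-retarded version $\bigl\|\int_{\R} S(t-s)g(s)\,ds\bigr\|_{L^{p_1}_t L^{q_1}_x} \lesssim \|g\|_{L^{p_2'}_t L^{q_2'}_x}$ follows by composing \eqref{Str1} for the pair $(p_1,q_1)$ with the dual of \eqref{Str1} for $(p_2,q_2)$. To pass from $\int_{\R}$ to the retarded operator $\int_0^t$, I would invoke the lemma of Christ and Kiselev, which applies because $p_2' \le 2 \le p_1$ and these two cannot both be equalities except in the double-endpoint case $p_1 = p_2 = 2$; in that single remaining case the retarded estimate is itself part of the theorem of Keel and Tao, proved directly by the same bilinear argument.

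Finally, for the duality statement, writing $I_{T,S}(n,v)(t) = -\frac{i}{2}\int_0^t S(t-t')\bigl(\1_{[0,T]}(t')\,n(t')\,v(t')\bigr)\,dt'$ and applying \eqref{Str2} with $g = \1_{[0,T]}nv$ gives $\|I_{T,S}(n,v)\|_{L^{p_1}_t L^{q_1}_x} \lesssim \|\1_{[0,T]}nv\|_{L^{p_2'}_t L^{q_2'}_x}$; identifying the norm $\|\cdot\|_{L^{p_2'}_t L^{q_2'}_x}$ with the supremum of the pairing against the unit ball of $L^{p_2}_t L^{q_2}_x$ then rewrites the right-hand side as $\sup_{\|u\|_{L^{p_2}_t L^{q_2}_x}=1}\bigl|\int_{\R}\int_{\R^d}\1_{[0,T]}nv\bar u\,dx\,dt\bigr|$, as claimed.

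The only genuinely delicate point is the endpoint $(p,q) = (2, 2d/(d-2))$: it lies outside the range reachable by the Hardy--Littlewood--Sobolev inequality and by the Christ--Kiselev lemma, and both in \eqref{Str1} and in the retarded form of \eqref{Str2} it forces the Keel--Tao bilinear interpolation argument. Everything else is routine and can be quoted from standard references on Strichartz estimates.
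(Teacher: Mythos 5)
Your proposal is correct and takes essentially the same route as the paper, which simply quotes the standard literature for this proposition (Yajima, Ginibre--Velo and Keel--Tao for \eqref{Str1} and \eqref{Str2}, with the last claim obtained ``by duality''): your dispersive-estimate/$TT^*$/Hardy--Littlewood--Sobolev argument, the Keel--Tao treatment of the endpoint $(2,2d/(d-2))$ and of the double-endpoint retarded case, and the Christ--Kiselev lemma for the remaining retarded cases are exactly the content of those references. The concluding identification of $\|\1_{[0,T]}nv\|_{L^{p_2'}_tL^{q_2'}_x}$ with the supremum of the pairing over the unit ball of $L^{p_2}_tL^{q_2}_x$ is precisely the duality step the paper intends.
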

For the proofs of \eqref{Str1} and \eqref{Str2}, see \cite{Yajima}, ~\cite{GV3} and ~\cite{KeelTao}.
\begin{prop}\label{multilinear}
Let $T_0:\, L^2_x\times \cdots \times L^2_x\to L^1_{loc}(\R^d;\C)\ $ 
be a n-linear operator.
Assume that for some $ 1\le p<\infty$ and $1\le  q \le \I$, it holds that
$$\|T_0( S(\cdot)\phi_1,\ldots ,S(\cdot)\phi_n)\|_{L^p_t(\R;L^q_x(\R^d))}
   \lec \ds \prod_{i=1}^n\|\phi_i\|_{L^2_x}.$$
Then, there exists $T:U_S^p\times \cdots \times U_S^p\to 
      L^p_t(\R;L^q_x(\R^d))$ satisfying  
$$\|T(u_1,\ldots ,u_n)\|_{L^p_t(\R;L^q_x(\R^d))}\lec \ds \prod_{i=1}^n
                    \|u_i\|_{U^p_S},$$
such that
$T(u_1,\ldots,u_n)(t)(x)=T_0(u_1(t),\ldots ,u_n(t))(x)$
a.e. 
\end{prop}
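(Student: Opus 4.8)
The plan is to prove the claimed bound first for the free evolutions of $U^p$-atoms and then to propagate it to general inputs. By $n$-linearity it is enough to control $T$ on products $u_i=S(\cdot)a_i$ with each $a_i$ a $U^p$-atom (and on finite linear combinations of such); on these we define $T$ by the pointwise formula $T(u_1,\dots,u_n)(t)(x):=T_0(u_1(t),\dots,u_n(t))(x)$, which is unambiguous because $T_0$ is $n$-linear.

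So fix $U^p$-atoms $a_1,\dots,a_n$, let $\{s_j\}_{j=0}^{J}\in\mathcal{Z}$ be a common refinement of the $n$ partitions underlying them, and write $a_i=\psi^{(i)}_{j-1}$ on the slab $[s_{j-1},s_j)$. Then $u_i(t)=S(t)\psi^{(i)}_{j-1}$ there, so $T(u_1,\dots,u_n)$ coincides on $[s_{j-1},s_j)\times\R^d$ with $T_0(S(\cdot)\psi^{(1)}_{j-1},\dots,S(\cdot)\psi^{(n)}_{j-1})$; in particular it is measurable. Since $p<\infty$ and the slabs are pairwise disjoint, the hypothesis gives
\EQQ{
\|T(u_1,\dots,u_n)\|_{L^p_tL^q_x}^p
 &=\sum_{j=1}^{J}\int_{s_{j-1}}^{s_j}\bigl\|T_0\bigl(S(t)\psi^{(1)}_{j-1},\dots,S(t)\psi^{(n)}_{j-1}\bigr)\bigr\|_{L^q_x}^p\,dt\\
 &\le\sum_{j=1}^{J}\bigl\|T_0\bigl(S(\cdot)\psi^{(1)}_{j-1},\dots,S(\cdot)\psi^{(n)}_{j-1}\bigr)\bigr\|_{L^p_tL^q_x}^p
  \lec\sum_{j=1}^{J}\prod_{i=1}^{n}\bigl\|\psi^{(i)}_{j-1}\bigr\|_{L^2_x}^p.
}

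It remains to bound $\sum_{j}\prod_{i}\|\psi^{(i)}_{j-1}\|_{L^2_x}^p$ by $1$. Each slab $[s_{j-1},s_j)$ lies in exactly one interval of each of the $n$ partitions; let $\kappa_i(j)$ be the index of that interval in the $i$-th partition, so that $\psi^{(i)}_{j-1}$ equals the value $\phi^{(i)}_{\kappa_i(j)-1}$ of $a_i$ on the $\kappa_i(j)$-th interval. The map $j\mapsto(\kappa_1(j),\dots,\kappa_n(j))$ is injective, since a refined slab is precisely the (nonempty) intersection of the $n$ intervals indexed by its image; hence
\EQQ{
\sum_{j=1}^{J}\prod_{i=1}^{n}\bigl\|\psi^{(i)}_{j-1}\bigr\|_{L^2_x}^p
 &\le\sum_{k_1,\dots,k_n}\prod_{i=1}^{n}\bigl\|\phi^{(i)}_{k_i-1}\bigr\|_{L^2_x}^p\\
 &=\prod_{i=1}^{n}\Bigl(\sum_{k_i}\bigl\|\phi^{(i)}_{k_i-1}\bigr\|_{L^2_x}^p\Bigr)=1,
}
by the normalization in the definition of a $U^p$-atom. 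Thus $\|T(u_1,\dots,u_n)\|_{L^p_tL^q_x}\lec1$ whenever each $u_i$ is the free evolution of a $U^p$-atom.

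Finally, by $n$-linearity and the triangle inequality the bound passes to finite linear combinations of atoms (taking infima over atomic representations produces the constant $\prod_i\|u_i\|_{U^p_S}$), and then to all of $U^p_S\times\cdots\times U^p_S$: approximating each $u_i$ by partial sums $u_i^{(m)}$ of an atomic decomposition of $a_i$, one has $u_i^{(m)}\to u_i$ in $U^p_S\subset L^\infty_t(\R;L^2_x)$, so $u_i^{(m)}(t)\to u_i(t)$ in $L^2_x$ uniformly in $t$, whence $T_0(u_1^{(m)}(t),\dots,u_n^{(m)}(t))\to T_0(u_1(t),\dots,u_n(t))$ in $L^1_{loc}(\R^d)$ for every $t$, while the corresponding series converges in $L^p_tL^q_x$; comparing the two limits shows that $T(u_1,\dots,u_n)$, still given by the pointwise formula, lies in $L^p_tL^q_x$ with $\|T(u_1,\dots,u_n)\|_{L^p_tL^q_x}\lec\prod_i\|u_i\|_{U^p_S}$, and the a.e.\ identity is then immediate. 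I expect the only genuinely non-routine point to be the injectivity observation used in the combinatorial step; granting it, that step is just Fubini on the product of the index sets, and everything else (measurability, the role of $p<\infty$ in the slab decomposition, and the limiting argument) is standard.
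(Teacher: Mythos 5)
Your argument is correct and is essentially the proof the paper relies on: the paper itself gives no proof but cites Proposition 2.19 of \cite{HHK}, whose argument is exactly your scheme --- define $T$ by the pointwise formula, prove the bound for free evolutions of $U^p$-atoms via a common refinement and the disjointness of the time slabs (your injectivity observation is the correct justification of the summation step, since a nonempty intersection of one interval from each partition contains no interior partition point and hence is a single refined slab), and then pass to general inputs by the atomic decomposition and multilinearity. The one place where you assert more than the hypotheses literally give is the claim that $T_0(u_1^{(m)}(t),\ldots,u_n^{(m)}(t))\to T_0(u_1(t),\ldots,u_n(t))$ in $L^1_{loc}$ for every $t$, which tacitly uses continuity of $T_0$ on $L^2_x\times\cdots\times L^2_x$ (mere $n$-linearity plus the space--time bound on free trajectories does not yield this); this is harmless and equally implicit in the cited proof, since in all applications in the paper $T_0$ is a product/multiplier operator for which that continuity is immediate.
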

See Proposition 2.19 in \cite{HHK} for the proof of the above proposition.
Combining Propositions \ref{embedding}, \ref{Strichartz} and \ref{multilinear}, we have the following corollary.
\begin{cor} \label{Strichartz-S}
Let $(p_1,q_1)$ satisfy the assumption in Proposition \ref{Strichartz} and $p \le p_1$. Then,
$U^p_S$ is continuously embedded in $L^{p_1}_tL^{q_1}_x$.
\end{cor}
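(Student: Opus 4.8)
The plan is to deduce Corollary \ref{Strichartz-S} from the three tools assembled just before it: the Strichartz estimate \eqref{Str1} for the free Schr\"odinger evolution, the transference principle of Proposition \ref{multilinear} (with $n=1$), and the embedding $U^p_S\subset U^{p_1}_S$ coming from Proposition \ref{embedding} $(ii)$. Concretely, first I would fix a pair $(p_1,q_1)$ satisfying the hypotheses of Proposition \ref{Strichartz}, so that $\|S(t)f\|_{L^{p_1}_tL^{q_1}_x}\lec\|f\|_{L^2_x}$ holds. Applying Proposition \ref{multilinear} with the linear ($n=1$) operator $T_0=\mathrm{Id}$, $p=p_1$, $q=q_1$, one obtains a bounded linear map $T:U^{p_1}_S\to L^{p_1}_tL^{q_1}_x$ with $T(u)(t)=u(t)$ a.e., i.e. $\|u\|_{L^{p_1}_tL^{q_1}_x}\lec\|u\|_{U^{p_1}_S}$ for all $u\in U^{p_1}_S$. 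This gives the desired embedding at the endpoint exponent $p=p_1$.

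Next I would upgrade from $p=p_1$ to general $p\le p_1$. Since $p\le p_1<\infty$, Proposition 2.1 $(ii)$ (the embedding $U^p\subset U^{p_1}$, transferred by the unitary $S$ to $U^p_S\subset U^{p_1}_S$) yields $\|u\|_{U^{p_1}_S}\lec\|u\|_{U^p_S}$. Composing the two inequalities gives $\|u\|_{L^{p_1}_tL^{q_1}_x}\lec\|u\|_{U^p_S}$ for all $u\in U^p_S$, which is exactly the assertion that $U^p_S$ embeds continuously into $L^{p_1}_tL^{q_1}_x$. The role of Proposition \ref{embedding} in the statement "Combining Propositions \ref{embedding}, \ref{Strichartz} and \ref{multilinear}" is precisely this monotonicity in $p$ (one could also invoke part $(iii)$, $U^p\subset V^p_{-,rc}\subset U^{p_1}$, but $(ii)$ suffices).

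There is essentially no obstacle here: the corollary is a formal consequence of the transference principle applied to the classical Strichartz inequality. The only point requiring a word of care is the measurability/identification step — that the abstract operator $T$ produced by Proposition \ref{multilinear} genuinely agrees pointwise a.e. with the function $u$ itself — but this is built into the conclusion of Proposition \ref{multilinear} (the clause $T(u_1,\dots,u_n)(t)(x)=T_0(u_1(t),\dots,u_n(t))(x)$ a.e.), so nothing new must be verified. Thus the proof is just the two-line chain
\EQQS{
 \|u\|_{L^{p_1}_tL^{q_1}_x}\lec\|u\|_{U^{p_1}_S}\lec\|u\|_{U^p_S}
}
valid for every $u\in U^p_S$.
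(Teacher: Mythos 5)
Your argument is exactly the paper's intended proof: the paper justifies the corollary with the single sentence ``Combining Propositions \ref{embedding}, \ref{Strichartz} and \ref{multilinear}'', i.e. transference of \eqref{Str1} via Proposition \ref{multilinear} with $n=1$, $T_0=\mathrm{Id}$, followed by the monotone embedding $U^p_S\subset U^{p_1}_S$, which is precisely your two-line chain. The only cosmetic caveat is the endpoint $p_1=\infty$ (allowed by Proposition \ref{Strichartz} with $q_1=2$), where Proposition \ref{multilinear} does not apply but the embedding $U^p_S\subset L^\infty_t L^2_x$ is immediate from Proposition 2.1 $(ii)$.
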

\begin{prop} \label{unique}
Let $d \ge 4,\ k=(d-3)/2$ and $l=(d-4)/2$. \\
(i) Let $T>0$ and $u \in X^k_S([0,T]),\ u(0)=0$. Then, for any $\e>0$, there exists $0 \le T' \le T$ such that 
$\|u\|_{X^k_S([0,T'])} < \e $.    \\
(ii) Let $T>0$ and $u \in Y^k_S([0,T]),\ u(0)=0$. Then, for any $\e>0$, there exists $0 \le T' \le T$ such that 
$\|u\|_{Y^k_S([0,T'])} < \e $.   \\
(iii) Let $T>0$ and $n \in \dot{Y}^l_{W_{\pm}}([0,T])$, (resp. $Y^l_{W_{\pm}}([0,T])$), $n(0)=0$. 
Then, for any $\e>0$, there exists $0 \le T' \le T$ such that $\|n\|_{\dot{Y}^l_{W_{\pm}}([0,T'])}$ (resp. $\|n\|_{Y^l_{W_{\pm}}([0,T'])}$) $< \e $. 
\end{prop}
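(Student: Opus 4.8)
\textbf{Proof proposal for Proposition \ref{unique}.}
The plan is to reduce all three parts to a single continuity statement: for a time‑localized function vanishing at $t=0$, the relevant $V^2$‑ or $X^k_S$‑type norm restricted to $[0,T']$ tends to $0$ as $T' \to 0+$. I would prove (i) first and then explain why (ii) and (iii) follow by the same mechanism (indeed (ii) is the $\|\cdot\|_{Y^k_S}$ part of (i), and (iii) is the analogous homogeneous/inhomogeneous Sobolev statement with $W_\pm$ in place of $S$ and $U^2$ in place of $V^2$). Fix an extension $\widetilde u \in X^k_S$ with $\widetilde u(t)=u(t)$ on $[0,T]$; by definition of $X^k_S([0,T'])$ it suffices to bound $\|\1_{[0,T']}(\cdot)\,\widetilde u\|_{X^k_S}$, or more precisely to produce, for each small $T'$, an admissible extension whose norm is small. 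A convenient choice is $v_{T'}(t) := S(t)\,\chi_{[0,T']}\!\ast\text{(nothing)}$—rather, I would take $v_{T'}$ to be the function that agrees with $\widetilde u$ on $[0,T']$, equals $0$ for $t\le 0$, and is frozen: $v_{T'}(t)=S(t)S(-T')\widetilde u(T')$ for $t \ge T'$. This $v_{T'}$ lies in $V^2_{-,rc,S}$ after the Littlewood–Paley pieces are reassembled, and agrees with $u$ on $[0,T']$.

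The key step is the estimate $\|v_{T'}\|_{Y^k_S} \to 0$ and $\|v_{T'}\|_{E^k}\to 0$ as $T'\to 0+$. For the $E^k=L^2_tL^{2d/(d-2)}_x$ part this is immediate: $v_{T'}$ is supported (in a Strichartz‑relevant sense) on a time interval whose contribution shrinks, and dominated convergence in the $N$‑summation gives smallness, using $u(0)=0$ together with $u\in C(\R;H^k_x)$ so that $\|u(t)\|_{H^k_x}\to 0$ as $t\to 0+$. For the $Y^k_S$ part I would estimate each dyadic block $\|P_N v_{T'}\|_{V^2_S}$ by splitting a partition $\{t_i\}$ into the indices lying in $[0,T']$ and the rest; on the complement $v_{T'}$ is a single free solution and contributes nothing to the $V^2$ sum, so only increments with at least one endpoint in $[0,T']$ survive, and each such increment is controlled by $2\sup_{0\le t\le T'}\|S(-t)u(t)\|_{L^2_x}$‑type quantities plus the telescoping structure, hence by a quantity tending to $0$. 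Summing in $N$ with the weight $N^{2k}$ requires a dominated‑convergence argument: the full sum $\sum_N N^{2k}\|P_N \widetilde u\|_{V^2_S}^2$ is finite, so the tail in $N$ is uniformly small, and for each fixed finite range of $N$ the contribution from $[0,T']$ vanishes as $T'\to 0$.

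The main obstacle is the interplay between the supremum over partitions defining $V^2_S$ and the time‑localization: one cannot simply say "the $V^2$ norm on a short interval is small", because $V^2$ does not see interval length directly—a function can have large $V^2$ norm on an arbitrarily short interval. The point that makes it work is precisely $u(0)=0$ together with continuity into $H^k_x$: this forces $\sup_{0\le t\le T'}\|u(t)\|_{H^k}\to 0$, and a $V^2$ increment with an endpoint in $[0,T']$ is bounded (after applying $S(-t)$, which is an $L^2_x$ isometry commuting with $P_N$) by the $L^2_x$ norms at its endpoints; chaining these and taking the square sum, one gets a bound by (a constant times) $\sup_{0\le t\le T'}\|u(t)\|_{H^k}$ plus the frozen part which contributes zero. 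Care is needed with the convention $v(+\infty)=0$ in Definition \ref{def_of_V} and with the fact that $\widetilde u$ need not be continuous at $+\infty$, but since $v_{T'}$ is a genuine free solution past time $T'$ it lies in $V^2_{-,rc,S}$ and its increments across $+\infty$ are harmless. Part (iii) is identical with $U^2_{W_\pm}$ replacing $V^2_S$: here one uses that a $U^2$ function restricted to $[0,T']$, extended by the frozen free wave, has $U^2$ norm controlled by the atomic decomposition, and $n(0)=0$ with $n\in C(\R;\dot H^l_x)$ (resp.\ $H^l_x$) again gives the decay; the homogeneous versus inhomogeneous distinction only changes whether the $P_{<1}$ block is present and whether the sum over $N$ ranges over all of $\Z$, which does not affect the argument.
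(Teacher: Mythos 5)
Your construction (extend $u$ by zero for $t\le 0$, keep it on $[0,T']$, freeze it as a free solution afterwards), the dominated-convergence argument in $N$, and your treatment of the $E^k$ part (endpoint Strichartz for the frozen tail plus absolute continuity of the $t$-integral over $[0,T']$) are all reasonable; the $E^k$ observation is in fact exactly what the paper adds to handle (i). The genuine gap is in the key $V^2$ step. You correctly flag the obstacle --- the $V^2$ norm does not see the length of $[0,T']$ --- but your proposed resolution does not overcome it: a partition may contain arbitrarily many points inside $[0,T']$, and the sum of squares of the increments lying entirely inside $[0,T']$ is \emph{not} bounded by a constant times $\sup_{0\le t\le T'}\|u(t)\|_{H^k}^2$. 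Each individual increment is small, but their number is unbounded, so ``chaining and taking the square sum'' only yields the $2$-variation of $S(-\cdot)P_N\tilde u$ over $[0,T']$, which is a priori comparable to $\|P_N\tilde u\|_{V^2_S}$, not to the sup. What you actually need is that this restricted $2$-variation itself tends to $0$ as $T'\to 0$. That statement is true, but its proof requires using the \emph{finiteness of the global} $V^2$ norm (for instance via superadditivity of the squared $2$-variation over disjoint intervals: if the variation over $[0,T']$ had a positive limit as $T'\to 0$, one could stack disjoint intervals accumulating at $0$ and contradict finiteness of the total variation), combined with right-continuity and $u(0)=0$ --- not merely the smallness of $\sup_{[0,T']}\|u\|_{H^k}$. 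This is precisely the content of Proposition 2.24 in \cite{HHK}, which the paper simply cites for (ii) and (iii) and then supplements with the $E^k$ remark to get (i); your writeup replaces that citation with an argument that, as stated, is false.

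Two smaller points. First, part (iii) concerns $\dot{Y}^l_{W_\pm}$ and $Y^l_{W_\pm}$, which by Definition \ref{defX} are built from $V^2_{W_\pm}$, not $U^2_{W_\pm}$ (the $U^2$-based norms are $\dot{Z}^l_{W_\pm}$, $Z^l_{W_\pm}$); so the discussion of atomic decompositions for a $U^2$ version is off target --- fortunately so, since controlling the $U^2$ norm of a time-restriction is harder --- and (iii) follows from the same $V^2$ argument as (ii) once the gap above is repaired. Second, the frozen free-solution extension is a fine device but is not needed once one has the variation-tail fact: the simpler extension $\1_{[0,T']}\tilde u$ (as in \cite{HHK}) already works for both the $V^2$ and the $E^k$ components.
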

\begin{proof}
For the proofs of $(ii)$ and $(iii)$, see Proposition 2.24 in \cite{HHK}. 
For the proof of $(i)$,  
we only see that $\|u\|_{{E}^k([0,T'])}<\e $, which follows from $\|u\|_{{E}^k([0,T])}<\infty$.
\end{proof}
\begin{lem} \label{Mhigh}
  If $f, g$ are measurable functions, then
\EQ{\label{QQ}
\int_{\R} \int_{\R^d} f(t,x) \ol{Q_{\ge M}^S g(t,x)} dx dt=
\int_{\R} \int_{\R^d} \Big(Q_{\ge M}^S f(t,x)\Big) \ol{ g(t,x)} dx dt.}
\end{lem}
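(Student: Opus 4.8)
The plan is to reduce the identity to a statement about the Fourier multiplier defining $Q_{\ge M}^S$ and then exploit its self-adjointness together with Plancherel's theorem. Recall that $Q_{\ge M}^S = S(\cdot)\,Q_{\ge M}\,S(-\cdot)$, so for a function $h(t,x)$ the operator acts by conjugating with the Schr\"odinger group; after taking the space-time Fourier transform, $S(-\cdot)$ shifts the time frequency $\tau$ to $\tau + |\xi|^2$, the multiplier $\sum_{N\ge M}\phi_n(\tau)$ is applied, and $S(\cdot)$ shifts back. Thus, in Fourier variables, $\widehat{Q_{\ge M}^S h}(\tau,\xi) = m_M\bigl(\tau+|\xi|^2\bigr)\,\widehat h(\tau,\xi)$, where $m_M(\sigma):=\sum_{N\ge M}\phi_n(\sigma)$ is a real-valued, bounded function (it is a sum of the Littlewood--Paley cutoffs $\phi_n$, which are real).

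The key steps are then: first, observe that $m_M$ is real-valued, so that the multiplication operator by $m_M(\tau+|\xi|^2)$ is self-adjoint on $L^2(\R^{1+d})$ with respect to the (sesquilinear) pairing $\int\!\!\int f\,\ol{g}\,dx\,dt$; second, apply Plancherel in $(t,x)$ to rewrite both sides of \eqref{QQ} as integrals over $(\tau,\xi)$; third, move the multiplier $m_M(\tau+|\xi|^2)$ from the $\ol{g}$ factor onto the $f$ factor using that it is real (so no conjugation issue arises), which is exactly the identity to be proved. Concretely,
\EQQS{
\int_{\R}\!\int_{\R^d} f\,\ol{Q_{\ge M}^S g}\,dx\,dt
 = \int_{\R}\!\int_{\R^d} \widehat f(\tau,\xi)\,\ol{m_M(\tau+|\xi|^2)\,\widehat g(\tau,\xi)}\,d\xi\,d\tau
 = \int_{\R}\!\int_{\R^d} m_M(\tau+|\xi|^2)\,\widehat f(\tau,\xi)\,\ol{\widehat g(\tau,\xi)}\,d\xi\,d\tau,
}
and this last expression equals $\int\!\!\int (Q_{\ge M}^S f)\,\ol{g}\,dx\,dt$ by Plancherel again.

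The only genuine points needing care, rather than a serious obstacle, are the justification of the Fourier-side manipulations for merely measurable $f,g$ (one should first assume enough integrability/decay, e.g. Schwartz functions or $f,g\in L^2$ with the relevant finiteness, and then pass to the general measurable case by the usual density/truncation argument, noting that the identity is really just the statement that the pairing with the bounded real multiplier is symmetric), and the verification that $m_M$ is indeed real-valued and bounded so that no absolute values or conjugates interfere when it is transferred across the pairing. I expect the hardest part to be merely bookkeeping: making sure the time-frequency conjugation by $S(\pm\cdot)$ produces precisely the multiplier $m_M(\tau+|\xi|^2)$ and that this is unchanged under complex conjugation, after which the identity is immediate.
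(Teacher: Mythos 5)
Your proof is correct and rests on the same key fact as the paper's: since the Littlewood--Paley cutoffs $\phi_n$ are real, $Q_{\ge M}^S$ is self-adjoint with respect to the pairing $\int\!\!\int f\,\ol{g}\,dx\,dt$, which you establish via Plancherel; the paper does the identical thing, only working with Plancherel in $x$ and the explicit time-convolution kernel $\F_t^{-1}[\phi_n]$ (using $\ol{\F_t^{-1}[\phi_n](t-t')}=\F_t^{-1}[\phi_n](t'-t)$) instead of your space-time multiplier $m_M(\ta+|\x|^2)$. Your caveat about integrability assumptions for ``measurable'' $f,g$ is reasonable and is left implicit in the paper as well.
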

\begin{proof}
From the definition of $Q_{\ge M}^S$, we obtain 
\EQS{
 \F_x[Q_{\ge M}^S g](t, \xi ) &= \sum_{N \ge M}\F_x[S(\cdot )Q_N S(-\cdot )g](t, \xi ) \notag \\
                                     &= \sum_{2^n \ge M}e^{-it|\xi |^2}\F_x\bigl[\F_t^{-1}[\phi _n(\ta ) 
                                           \F_t[S(-\cdot )g](\ta )]\bigr](t, \xi ) \notag \\
                                     &= \sum_{2^n \ge M}e^{-it|\xi |^2} \F_t^{-1}\bigl[\phi _n(\ta ) 
                                           \F_t\bigl[e^{i \cdot |\xi |^2} \F_x[g]\bigr](\ta )\bigr](t, \xi ) \notag \\
                                     &= \sum_{2^n \ge M}e^{-it|\xi |^2}(\F_t^{-1}[\phi _n] \ast_{(t)} 
                                           e^{i \cdot |\xi |^2} \F_x[g])(t, \xi )  
                                             \label{a}
}
Applying the Plancherel theorem and \eqref{a}, we obtain that the left-hand side of \eqref{QQ} is equal to
\EQQ{
  &\int_{\R}\int_{\R^d}\F_x[f](t, \xi ) \ol{\F_x [Q_{\ge M}^S g](t, \xi )} d\xi dt  \notag \\ 
      =& \sum_{2^n \ge M} \int_{\R}\int_{\R^d}\int_{\R}e^{i(t-t')|\xi |^2} \F_x[f](t, \xi ) \ol{\F_t^{-1}[\phi _n](t-t')} 
           \ol{\F_x[g](t', \xi )} dt' d\xi dt\\
=&\sum_{2^n \ge M} \int_{\R}\int_{\R^d}\int_{\R}e^{i(t-t')|\xi |^2} \F_x[f](t, \xi ) \F_t^{-1}[\phi _n](t'-t) 
           \ol{\F_x[g](t', \xi )} dt' d\xi dt.
}
In the last line, we used $\ol{\F_t^{-1}[\phi _n](t-t')} = \F_t^{-1}[\phi _n](t'-t)$, which holds because $\phi _n $ is real valued.
Applying the Plancherel theorem and \eqref{a}, we obtain that the right-hand side of \eqref{QQ} is equal to
\EQQ{
& \int_{\R}\int_{\R^d} \F_x [Q_{\ge M}^S f](t, \xi ) \ol{\F_x[g](t, \xi )}d\xi dt  \notag \\
     =& \sum_{2^n \ge M} \int_{\R}\int_{\R^d} e^{-it|\xi |^2}(\F_t^{-1}[\phi _n] \ast_{(t)} e^{i \cdot |\xi |^2} \F_x[f])(t, \xi )
            \ol{\F_x[g](t, \xi )}d\xi dt     \notag \\ 
     =& \sum_{2^n \ge M} \int_{\R}\int_{\R^d}\int_{\R} e^{i(t'-t)|\xi |^2} \F_x[f](t', \xi )\F_t^{-1}[\phi _n](t-t') 
            \ol{\F_x[g](t, \xi )}dt' d\xi dt.
}
Thus, we conclude \eqref{QQ}.
\end{proof}
%
%
%
%
%
%
\begin{lem} \label{trilinear}
Let $d \ge 4,\ k=(d-3)/2,\ l=(d-4)/2,\ f_{N_3}:=P_{N_3} f,\ g_{N_2}:=P_{N_2} g$ and $h_{N_1}:=P_{N_1} h.$ Then, the following estimates hold: \\
(i) 
\vspace{-1.3\baselineskip}
\EQQS{ 
\Bigl| \int_{\R} \int_{\R^d}f_{N_3}g_{N_2}\ol{h_{N_1}}dxdt \Bigr| 
    \lec N_3^l \| f_{N_3}\|_{V^2_{W_{\pm}}}\| g_{N_2}\|_E\| h_{N_1}\|_E, 
}
(iia)
\vspace{-1.3\baselineskip}
\EQQS{
\Bigl| \int_{\R} \int_{\R^d} \Bigl( \sum_{N_3\ll N_2}f_{N_3}\Bigr) g_{N_2}\ol{h_{N_1}}dxdt \Bigr| 
    \lec \| f\|_{\dot{Y}^l_{W_{\pm}}}\| g_{N_2}\|_E\| h_{N_1}\|_E, 
}
(iib)
\vspace{-1.3\baselineskip}
\EQQS{
\Bigl| \int_{\R} \int_{\R^d} \Bigl( \sum_{N_3\ll N_2}f_{N_3}\Bigr) g_{N_2}\ol{h_{N_1}}dxdt \Bigr| 
    \lec \| f\|_{Y^l_{W_{\pm}}}\| g_{N_2}\|_E\| h_{N_1}\|_E, 
}
(iii) If $N_1 \sim N_3, N_3 > 2^2,\ M=\e N_3^2$ and $\e >0$ is small, then
\EQQS{
    \Bigl| \int_{\R} \int_{\R^d}(Q^{W_{\pm}}_{\ge M}f_{N_3}) \Bigl( \sum_{N_2 \ll N_3}g_{N_2}\Bigr) \ol{h_{N_1}} dxdt \Bigr| 
     \lec N_3^{-1/2}\| f_{N_3}\|_{V^2_{W_{\pm}}}\| g\|_{Y^k_S}\| h_{N_1}\|_E,
}
(iv) If $N_1 \sim N_3, N_3 > 2^2,\ M=\e N_3^2$ and $\e >0$ is small, then
\EQQS{
     \Bigl| \int_{\R} \int_{\R^d}(Q^{W_{\pm}}_{<M} f_{N_3}) \Bigl( \sum_{N_2 \ll N_3}g_{N_2}\Bigr) \ol{(Q^S_{\ge M} h_{N_1})}
               dxdt \Bigr|
      \lec N_3^{-1/2}\| f_{N_3}\|_{V^2_{W_{\pm}}}\| g\|_{E^k}\| h_{N_1}\|_{V^2_S},
}         
(v) If $N_1 \sim N_3, N_3 > 2^2,\ M=\e N_3^2$ and $\e >0$ is small, then
\EQQS{
     \Bigl| \int_{\R} \int_{\R^d} (Q^{W_{\pm}}_{<M} f_{N_3})\Bigl(\sum_{N_2 \ll N_3}Q^S_{\ge M}g_{N_2}\Bigr)\ol{h_{N_1}}
               dxdt \Bigr|
      \lec N_3^{-1/2}\| f_{N_3}\|_{V^2_{W_{\pm}}}\| g\|_{Y^k_S}\| h_{N_1}\|_E.
}                                                
(vi) If $N_1\sim N_3, N_3 > 2^2,\ M=\e N_3^2$ and $\e >0$ is small, then
\EQQS{
     \Bigl| \int_{\R} \int_{\R^d} (Q^{W_{\pm}}_{<M} f_{N_3}) \Bigl(\sum_{N_2 \ll N_3}Q^S_{\ge M}g_{N_2}\Bigr)
               \ol{(Q^S_{\ge M}h_{N_1})}dxdt \Bigr|
       \lec N_3^{-1/2}\| f_{N_3}\|_{V^2_{W_{\pm}}}\| g\|_{Y^k_S}\| h_{N_1}\|_{V^2_S}.
}
Here, the implicit constants may depend on $\e$. Moreover, $(i)$--$(vi)$ are
valid if $f_{N_3}$, $Q^{W_{\pm}}_{\ge M}f_{N_3}$, $Q^{W_{\pm}}_{<M} f_{N_3}$ in the left-hand sides are replaced by their complex conjugate.
\end{lem}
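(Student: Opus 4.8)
The plan is to derive all six bounds from a single small toolkit and to let the numerology do the rest. The ingredients are: Hölder's inequality in spacetime; Bernstein's inequality on the Littlewood--Paley pieces; the Littlewood--Paley square-function estimate in $L^p_x$ for $2\le p<\infty$ together with Minkowski's inequality (to sum low-frequency pieces without a logarithmic loss); the embedding $V^2_A\hookrightarrow L^\infty_tL^2_x$ for $A=S$ or $W_\pm$ from Remark \ref{embed}; and, for the factors carrying a modulation cut-off, the bounds $\|Q^A_{\ge M}v\|_{L^2_{t,x}}\lec M^{-1/2}\|v\|_{V^2_A}$ and $\|Q^A_{<M}v\|_{V^2_A}\lec\|v\|_{V^2_A}$ from Proposition \ref{modulation}. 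Since $l=(d-4)/2$ and $k=(d-3)/2$, Bernstein on a piece of frequency $N$ costs $N^l$ from $L^2_x$ to $L^{d/2}_x$, costs $N^{k+1/2}$ from $L^2_x$ to $L^d_x$ and from $L^{2d/(d-2)}_x$ to $L^\infty_x$, and costs $N^{d/2}=N^{k+3/2}$ from $L^2_x$ to $L^\infty_x$, while $M=\e N_3^2$ gives $M^{-1/2}\sim N_3^{-1}$; all the arithmetic below is just the bookkeeping of these powers. The final sentence of the lemma needs no separate argument, since every norm that occurs is invariant under complex conjugation.

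For $(i)$ I would apply Hölder with the triple $L^\infty_tL^{d/2}_x\cdot E\cdot E$ — admissible since $d\ge4$ — keep $g_{N_2}$ and $\ol{h_{N_1}}$ in $E=L^2_tL^{2d/(d-2)}_x$, and estimate $\|f_{N_3}\|_{L^\infty_tL^{d/2}_x}\lec N_3^l\|f_{N_3}\|_{L^\infty_tL^2_x}\lec N_3^l\|f_{N_3}\|_{V^2_{W_\pm}}$ using Bernstein and Remark \ref{embed}. For $(iia)$ the same Hölder split reduces matters to $\|\sum_{N_3\ll N_2}f_{N_3}\|_{L^\infty_tL^{d/2}_x}\lec\|f\|_{\dot Y^l_{W_\pm}}$. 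A term-by-term triangle inequality here would cost a factor $(\log N_2)^{1/2}$, which is fatal in the critical case $d=4$ (where $l=0$); I would instead use the Littlewood--Paley estimate in $L^{d/2}_x$, then Minkowski's inequality (this is the place where $d/2\ge 2$, i.e.\ $d\ge4$, is used), then Bernstein, and finally $\sup_t\bigl(\sum_{N_3}\cdots\bigr)\le\sum_{N_3}\sup_t(\cdots)$, which lands on $\bigl(\sum_{N_3}N_3^{2l}\|f_{N_3}\|_{V^2_{W_\pm}}^2\bigr)^{1/2}$. Estimate $(iib)$ is the same after writing $\sum_{N_3\ll N_2}f_{N_3}=P_{<1}f+\sum_{1\le N_3\ll N_2}f_{N_3}$: the frequency-$<1$ Bernstein $L^2_x\to L^{d/2}_x$ together with $V^2_{W_\pm}\hookrightarrow L^\infty_tL^2_x$ bounds the first summand by $\|f\|_{Y^l_{W_\pm}}$, and the second is handled exactly as in $(iia)$.

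For $(iii)$--$(vi)$ the modulation cut-off always supplies one factor $M^{-1/2}\sim N_3^{-1}$ via $\|Q^A_{\ge M}(\cdot)\|_{L^2_{t,x}}\lec M^{-1/2}\|\cdot\|_{V^2_A}$, and the only task is to recover exactly the complementary $N_3^{1/2}$ from the low-frequency factor $\sum_{N_2\ll N_3}g_{N_2}$. For $(iii)$ and $(iv)$ an $L^2_{t,x}\cdot L^2_{t,x}$ Hölder split works: in $(iii)$ one puts $Q^{W_\pm}_{\ge M}f_{N_3}$ into $L^2_{t,x}$ and bounds $\|(\sum_{N_2\ll N_3}g_{N_2})\ol{h_{N_1}}\|_{L^2_{t,x}}$ by Hölder $L^\infty_tL^d_x\cdot L^2_tL^{2d/(d-2)}_x$, using $\|P_{\ll N_3}g\|_{L^\infty_tL^d_x}\lec N_3^{1/2}\|g\|_{Y^k_S}$ (the $L^d_x$ square function, Minkowski, Bernstein with cost $N_2^{k+1/2}$, $V^2_S\hookrightarrow L^\infty_tL^2_x$, and the trivial inequality $\sum_{N_2\ll N_3}N_2^{2k+1}a_{N_2}\le N_3\sum_{N_2}N_2^{2k}a_{N_2}$, valid since $2k+1=d-2>0$); in $(iv)$ one puts $Q^S_{\ge M}h_{N_1}$ into $L^2_{t,x}$, pairs $Q^{W_\pm}_{<M}f_{N_3}$ (bounded in $V^2_{W_\pm}$, hence in $L^\infty_tL^2_x$) with $P_{\ll N_3}g$ in $L^2_tL^\infty_x$, and bounds $\|P_{\ll N_3}g\|_{L^2_tL^\infty_x}\lec N_3^{1/2}\|g\|_{E^k}$ by Bernstein $L^{2d/(d-2)}_x\to L^\infty_x$ and Cauchy--Schwarz against $\sum_{N_2\ll N_3}N_2\lec N_3$. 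For $(v)$ the $L^2_{t,x}\cdot L^2_{t,x}$ split is wasteful, since $\ol{h_{N_1}}$ lives only in the $L^2_t$-based space $E$ at frequency $\sim N_3$; I would instead use $\|Q^{W_\pm}_{<M}f_{N_3}\|_{L^\infty_tL^2_x}\cdot\|(\sum_{N_2\ll N_3}Q^S_{\ge M}g_{N_2})\ol{h_{N_1}}\|_{L^1_tL^2_x}$, estimate the last norm by Hölder $L^2_tL^d_x\cdot L^2_tL^{2d/(d-2)}_x$, and bound $\|\sum_{N_2\ll N_3}Q^S_{\ge M}g_{N_2}\|_{L^2_tL^d_x}\lec N_3^{-1}\cdot N_3^{1/2}\|g\|_{Y^k_S}$ by combining the modulation gain with Bernstein (cost $N_2^{k+1/2}$), the $L^d_x$ square function, and the same summation $\sum_{N_2\ll N_3}N_2^{2k+1}a_{N_2}\le N_3\sum_{N_2}N_2^{2k}a_{N_2}$. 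Finally for $(vi)$, which carries two modulation gains, I would use $\|Q^{W_\pm}_{<M}f_{N_3}\|_{L^\infty_tL^2_x}\cdot\|\sum_{N_2\ll N_3}Q^S_{\ge M}g_{N_2}\|_{L^2_tL^\infty_x}\cdot\|Q^S_{\ge M}h_{N_1}\|_{L^2_{t,x}}$, the last factor being $\lec N_3^{-1}\|h_{N_1}\|_{V^2_S}$ and the middle one $\lec N_3^{-1}\cdot N_3^{3/2}\|g\|_{Y^k_S}=N_3^{1/2}\|g\|_{Y^k_S}$ via $\|Q^S_{\ge M}g_{N_2}\|_{L^2_tL^\infty_x}\lec N_2^{d/2}N_3^{-1}\|g_{N_2}\|_{V^2_S}$ and Cauchy--Schwarz against $\sum_{N_2\ll N_3}N_2^{d}\lec N_3^{d}$ (recall $d=2k+3$). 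In each of $(iii)$--$(vi)$ the frequency-$<1$ part of $g$ (or of $\sum_{N_2}Q^S_{\ge M}g_{N_2}$) is absorbed separately by the frequency-$<1$ Bernstein and $V^2_S\hookrightarrow L^\infty_tL^2_x$, and the powers of $N_3$ always multiply out to $N_3^{-1/2}$.

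The step I expect to be the real obstacle is making the low-frequency summations lossless. A naive $\ell^1$-sum over dyadic pieces costs a logarithm, which one cannot afford at the critical regularity and which is invisible only when $d>4$; the remedy is either the $L^{d/2}_x$ (resp.\ $L^d_x$) square function, which converts the loss into a harmless $\ell^2$-sum and is available precisely because $d\ge 4$, or, when a factor must sit in $L^\infty_x$, keeping a spare positive power $N_2^a$ so that the sum is a convergent geometric series $\lec N_3^a$. The second point requiring care is the combinatorial choice in $(iii)$--$(vi)$ of which factor to place in $L^\infty_t$ and whether to split as $L^2_{t,x}\cdot L^2_{t,x}$ or as $L^\infty_tL^2_x\cdot L^1_tL^2_x$, so that the single $N_3^{-1}$ from Proposition \ref{modulation} is exactly balanced by the $N_3^{1/2}$ that Bernstein produces on the low-frequency factor; once the split is fixed, everything else is routine Hölder and Bernstein.
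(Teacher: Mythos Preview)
Your proof is correct and follows essentially the same route as the paper---identical H\"older splits in each case, the same use of Proposition \ref{modulation} for the modulation factors, and the same Bernstein/summation bookkeeping---the only cosmetic difference being that you sum the low-frequency pieces via the $L^{d/2}_x$ (or $L^d_x$) Littlewood--Paley square function plus Minkowski, whereas the paper first applies the Sobolev embedding $\|F\|_{L^{d/2}_x}\lec\||\nabla_x|^{l}F\|_{L^2_x}$ (resp.\ $L^d_x\leftarrow|\nabla_x|^{(d-2)/2}L^2_x$) and then uses plain $L^2_x$-orthogonality (Remark \ref{estx}, Lemma \ref{estX}) to reach the same $\ell^2$ sum. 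One small slip in $(vi)$: the Cauchy--Schwarz weight should be $\sum_{N_2\ll N_3}N_2^{d-2k}=\sum_{N_2\ll N_3}N_2^{3}\lec N_3^{3}$, not $\sum N_2^{d}$, though your final exponent $N_3^{3/2}$ is correct.
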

\begin{proof}
First, we show $(i)$. 
By the H\"{o}lder inequality, we have 
\EQS{ \label{i1} 
 (\text{LHS\ of} \ (i)) 
   \lec \| f_{N_3}\|_{L^{\I}_tL^{d/2}_x}\| g_{N_2}\|_{L^2_tL^{2d/(d-2)}_x}\| h_{N_1}\|_{L^2_tL^{2d/(d-2)}_x}. 
}
The Sobolev inequality and Remark \ref{embed}
gives 
\EQS{ \label{i2} 
 \| f_{N_3}\|_{L^{\I}_tL^{d/2}_x}
   \lec \bigl{\|} |\na _x|^{(d-4)/2}f_{N_3}\bigr{\|}_{L^{\I}_tL^2_x} 
   \lec N_3^{(d-4)/2}\| f_{N_3}\|_{V^2_{W_{\pm}}}.
}  
Hence, from \eqref{i1} and \eqref{i2}, we obtain $(i)$.
By Remarks \ref{embed} and \ref{estx},
\EQQS{
\Bigl{\|} |\nabla_x|^{(d-4)/2} \sum_{N_3\ll N_2}f_{N_3} \Bigr{\|}_{L^{\I}_tL^2_x}\lec
\Big\| \sum_{N_3\ll N_2}f_{N_3} \Big\|_{\dot{Y}^{(d-4)/2}_{W_{\pm}}} 
\lec \| f \|_{\dot{Y}^{(d-4)/2}_{W_{\pm}}}.
}
Thus, we obtain $(iia)$ in the same manner as $(i)$.
By Remarks \ref{embed} and Lemma \ref{estX},
\EQQS{
\Bigl{\|} |\nabla_x|^{(d-4)/2} \sum_{1 \le N_3\ll N_2}f_{N_3} \Bigr{\|}_{L^{\I}_tL^2_x}\lec
\Big\| \sum_{1\le N_3\ll N_2}f_{N_3} \Big\|_{{Y}^{(d-4)/2}_{W_{\pm}}} 
\lec \| f \|_{{Y}^{(d-4)/2}_{W_{\pm}}},
}
\EQQS{
\Bigl{\|} |\nabla_x|^{(d-4)/2} \sum_{N_3<1} f_{N_3} \Bigr{\|}_{L^{\I}_tL^2_x}\lec
\Big\| \sum_{N_3<1}f_{N_3} \Big\|_{V^2_{W_{\pm}}} 
\lec \| P_{<1} f \|_{V^2_{W_{\pm}}}.
}
Thus, we obtain $(iib)$ in the same manner as $(i)$.
Next, we show $(iii)$.
By the H\"{o}lder  inequality, the Sobolev inequality and Proposition \ref{modulation}, we have   
\EQ{\label{e-iiia}
(\text{LHS\ of}\ (iii))  
      &\lec \bigl{\|}Q^{W_{\pm}}_{\ge M}f_{N_3}\bigr{\|}_{L^2_{t,x}} \Bigl{\|} \sum_{N_2 \ll N_3}g_{N_2}\Bigr{\|}_{L^{\I}_tL^d_x}
              \|h_{N_1}\|_{L^2_tL^{2d/(d-2)}_x}\\
&\lec N_3^{-1}\|f_{N_3}\|_{V^2_{W_{\pm}}} \Bigl{\|} |\na _x|^{(d-2)/2}\sum_{N_2 \ll N_3}g_{N_2}\Bigr{\|}_{L^{\I}_tL^2_x} \|h_{N_1}\|_{E}.
}
By Remark \ref{embed}, we have
\EQ{\label{e-iiib}
 \Bigl{\|} |\na _x|^{(d-2)/2} \sum_{N_2 <1} g_{N_2}\Bigr{\|}_{L^{\I}_tL^2_x} 
  \lec \|  P_{<1}g \|_{L^{\I}_tL^2_x}
  \lec \|P_{<1}g\|_{V^2_S}
  \lec \|g\|_{Y^k_S}.    
}
By $L^2_x$ orthogonality and Remark \ref{embed}, we have 
\EQ{\label{e-iiic}
 \Bigl{\|} |\na _x|^{(d-2)/2} \sum_{1\le N_2 \ll N_3} g_{N_2}\Bigr{\|}_{L^{\I}_tL^2_x}\lec
   \Bigl(\sum_{1 \le N_2 \ll N_3} N_2^{d-2} \|g_{N_2}\|_{V^2_S}^2\Bigr)^{1/2}
  \lec N_3^{1/2}\|g\|_{Y^k_S}. 
}
Collecting \eqref{e-iiia}--\eqref{e-iiic}, we obtain $(iii)$.
Next, we show $(iv)$.
Applying the H\"{o}lder inequality, we have 
\EQS{ \label{iv}
 (\text{LHS of}\ (iv)) 
      &\lec \|Q_{<M}^{W_{\pm}}f_{N_3}\|_{L^{\I}_tL^2_x}\Bigl{\|}\sum_{N_2 \ll N_3}g_{N_2}\Bigr{\|}_{L^2_tL^{\I}_x}
                \|Q_{\ge M}^S h_{N_1}\|_{L^2_{t,x}}. 
}
By Remark \ref{embed} and Proposition \ref{modulation}, we have 
\EQS{
& \|Q_{<M}^{W_{\pm}} f_{N_3}\|_{L^{\I}_tL^2_x} \lec \|Q_{<M}^{W_{\pm}} f_{N_3}\|_{V^2_{W_{\pm}}}
                                                      \lec \|f_{N_3}\|_{V^2_{W_{\pm}}},\label{e-iva}\\
&\|Q_{\ge M}^S h_{N_1}\|_{L^2_{t,x}} \lec N_1^{-1}\|h_{N_1}\|_{V^2_S}.\label{e-ivb}
} 
By the triangle inequality and the Bernstein inequality (see e.g. (A.6) on page 333 in $\!$~\cite{Tao}), we have 
\EQ{ \label{g}
  \Bigl{\|} \sum_{N_2 \ll N_3}g_{N_2}\Bigr{\|}_{L^2_tL^{\I}_x}
   \lec \sum_{N_2 \ll N_3} \|g_{N_2}\|_{L^2_tL^{\I}_x}
 \lec \sum_{N_2 \ll N_3} N_2^{(d-2)/2}\|g_{N_2}\|_{E}
}
Since $(d-2)/2 >0$, by Mihlin-H\"{o}rmander's multiplier theorem, we have  
\EQ{ \label{iv2}
\sum_{N_2 <1} N_2^{(d-2)/2}\|g_{N_2}\|_{E}
\lec  \sum_{N_2 <1} N_2^{(d-2)/2} \| P_{N_2}P_{<1} g\|_E \lec \|P_{<1}g\|_E. 
}
By the Cauchy-Schwarz inequality, we have 
\EQ{ \label{iv3}
\sum_{1 \le N_2 \ll N_3} N_2^{(d-2)/2}{\|}g_{N_2}{\|}_{E}
   &\lec \Bigl(\sum_{1 \le N_2 \ll N_3}N_2\Bigr)^{1/2} \Bigl( \sum_{1 \le N_2 \ll N_3} N_2^{d-3}\|g_{N_2}\|_E^2 
             \Bigr)^{1/2} \\
   &\lec N_3^{1/2} \|g\|_{E^k}.
}
Collecting \eqref{iv}--\eqref{iv3} and $N_1 \sim N_3$, we obtain $(iv)$.
Next, we show $(v)$.
Applying the H\"{o}lder inequality, the Sobolev inequality and \eqref{e-iva}, we have 
\EQ{ \label{v}
(\text{LHS\ of}\ (v)) 
      &\lec \|Q_{<M}^{W_{\pm}}f_{N_3}\|_{L^{\I}_tL^2_x}\Bigl{\|}\sum_{N_2 \ll N_3}
                Q^S_{\ge M}\, g_{N_2}\Bigr{\|}_{L^2_tL^d_x}\| h_{N_1}\|_{L^2_tL^{2d/(d-2)}_x}\\
      &\lec \|f_{N_3}\|_{V^2_{W_{\pm}}} \Bigl{\|}|\na _x|^{(d-2)/2}\sum_{N_2 \ll N_3}
                Q^S_{\ge M}\, g_{N_2}\Bigr{\|}_{L^2_{t,x}}\| h_{N_1}\|_E.
}
By Proposition \ref{modulation}, we have 
\EQS{ \label{vg2}
 \Bigl{\|} |\na _x|^{(d-2)/2}\sum_{N_2 <1}Q^S_{\ge M} g_{N_2}\Bigr{\|}_{L^2_{t,x}}
  \lec \|Q^S_{\ge M}P_{<1} g\|_{L^2_{t,x}} 
  \lec N_3^{-1}\|P_{<1}g\|_{V^2_S}.
}
By $L^2_x$ orthogonality and Proposition \ref{modulation}, we have  
\EQ{ \label{vg3}
\Bigl{\|} |\na _x|^{(d-2)/2}\sum_{1\le N_2 \ll  N_3}Q^S_{\ge M} g_{N_2} \Bigr{\|}_{L^2_{t,x}}
  &\lec \Bigl( \sum_{1 \le N_2 \ll N_3} \bigl{\|}|\na _x|^{(d-2)/2}
            Q^S_{\ge M}g_{N_2}\bigr{\|}_{L^2_{t,x}}^2\Bigr)^{1/2}\\
  &\lec \Bigl( \sum_{1 \le N_2 \ll N_3} N_2^{d-2}N_3^{-2} 
            \|g_{N_2}\|_{V^2_S}^2\Bigr)^{1/2}\\
  &\lec N_3^{-1/2}\|g\|_{Y^k_S}. 
}
From \eqref{v}--\eqref{vg3}, we obtain $(v)$.
Finally, we show $(vi)$.
By the H\"{o}lder inequality, the triangle inequality, the Bernstein inequality, \eqref{e-iva} and \eqref{e-ivb}, we have
\EQ{ \label{vi}
(\text{LHS\ of}\ (vi)) 
      & \lec \| Q_{<M}^{W_{\pm}} f_{N_3}\|_{L^{\I}_tL^2_x}
             \Bigl{\|} \sum_{N_2 \ll N_3} Q_{\ge M}^S g_{N_2}\Bigr{\|}_{L^2_tL^{\I}_x}
              \|Q_{\ge M}^S h_{N_1}\|_{L^2_{t,x}}\\
&\lec \|  f_{N_3}\|_{V_{W_\pm}^2}
             \sum_{N_2 \ll N_3} N_2^{d/2} \| Q_{\ge M}^S g_{N_2} \|_{L^2_{t,x}}
              N_1^{-1}\| h_{N_1}\|_{V^2_S}.
}
By Proposition \ref{modulation}, we have  
\EQS{ \label{gn3}
 \sum_{N_2 <1} N_2^{d/2}\| Q_{\ge M}^S g_{N_2} \|_{L^2_{t,x}}
   \lec \| Q_{\ge M}^S P_{<1}g \|_{L^2_{t,x}}
   \lec N_3^{-1}\|P_{<1}g\|_{V^2_S}.
}
By the Cauchy-Schwarz inequality and Proposition \ref{modulation}, we obtain 
\EQ{ \label{gn4}
\sum_{1 \le N_2 \ll N_3} N_2^{d/2} \| Q_{\ge M}^S g_{N_2} \|_{L^2_{t,x}}
  &\lec \Big(\sum_{1 \le N_2 \ll N_3} N_2^3\Big)^{1/2}   \Bigl( \sum_{1 \le N_2 \ll N_3} N_2^{d-3} \| Q_{\ge M}^S g_{N_2} \|_{L^2_{t,x}}^2\Bigr)^{1/2}\\
  &\lec N_3^{3/2}\Bigl( \sum_{1 \le N_2 \ll N_3}N_2^{d-3} N_3^{-2}\|g_{N_2}\|_{V^2_S}^2\Bigr)^{1/2}\\
  &\lec N_3^{1/2} \|g\|_{Y^k_S}.
}
From \eqref{vi}--\eqref{gn4} and $N_1 \sim N_3$, we obtain $(vi)$.
We can easily check that the proofs of $(i)$ -- $(vi)$ above are valid if $f_{N_3}$, $Q^{W_{\pm}}_{\ge M}f_{N_3}$, $Q^{W_{\pm}}_{<M} f_{N_3}$ in the left-hand sides are replaced by their complex conjugate.
\end{proof}

\section{bilinear estimates}
In this section, we give bilinear estimates for the Duhamel terms
\eqref{DuS} and \eqref{DuW}.   
\begin{prop}\label{B.E.homo}
Let $d \ge 4,\ k=(d-3)/2$ and $l=(d-4)/2.$ Then for all $0<T<\I$,
it holds that
\EQS{
&\| I_{T,S}(n,v)\|_{X^k_S}\lec \| n\|_{\dot{Y}^l_{W_{\pm}}}\| v\|_{X^k_S}, \label{B.E.S.} \\
&\| I_{T,\, W_{\pm}}(u,v)\|_{\dot{Z}^l_{W_{\pm}}}\lec \|u\|_{X^k_S}\|v\|_{X^k_S}. \label{B.E.W.}
}
Here, the implicit constants do not depend on $T$.
\end{prop}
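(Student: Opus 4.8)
The strategy for both \eqref{B.E.S.} and \eqref{B.E.W.} is to reduce, via the duality characterizations of the $U^2$- and $V^2$-based norms, to a sum over Littlewood--Paley pieces of trilinear space-time integrals of the type already controlled in Lemma~\ref{trilinear}, and then to sum. For \eqref{B.E.S.} I would estimate $\|I_{T,S}(n,v)\|_{Y^k_S}$ and $\|I_{T,S}(n,v)\|_{E^k}$ separately. Writing $P_{N}I_{T,S}(n,v)(t)=S(t)w(t)$ with $w$ absolutely continuous, $S(t)w'(t)=-\tfrac i2\mathbf 1_{[0,T]}(t)P_{N}(nv)(t)$, and $w$ constant on $[T,\infty)$, Corollary~\ref{V^2_A} bounds $\|P_{N}I_{T,S}(n,v)\|_{V^2_S}$ by $\sup_{\|h\|_{U^2_S}=1}|\int_0^T\langle P_{N}(nv)(t),h(t)\rangle_{L^2_x}\,dt|$ up to a boundary term which, by one more use of $L^2_x$-duality, has the same form with $h$ a $U^2_S$-atom; and the dual endpoint Strichartz estimate of Proposition~\ref{Strichartz} bounds $\|P_{N}I_{T,S}(n,v)\|_E$ by the supremum of the same integral over $\|h\|_E=1$. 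Since $U^2_S\hookrightarrow E$ by Corollary~\ref{Strichartz-S}, in every case the test function $h$ (which may be taken frequency-localized at $\sim N$) lies in the unit ball of $E$, and in the first case also in that of $V^2_S$; the cutoff $\mathbf 1_{[0,T]}$ is uniformly bounded on all these spaces, so the constants do not depend on $T$. Estimate \eqref{B.E.W.} reduces likewise: by Corollary~\ref{U^2_A} (no boundary term here), $\|P_{N}I_{T,W_\pm}(u,v)\|_{U^2_{W_\pm}}=\sup_{\|n^*\|_{V^2_{W_\pm}}=1}|\int_0^T\langle P_{N}(u\bar v)(t),\om\,n^*(t)\rangle_{L^2_x}\,dt|$, leading to the same trilinear integral with the wave factor $\om\,n^*$ (frequency $\sim N$, $\|\om\,n^*\|_{V^2_{W_\pm}}\lesssim N\|n^*\|_{V^2_{W_\pm}}$) in place of a $P_{N_3}n$ and with $u,v\in X^k_S$ in the two Schr\"odinger slots. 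The low-frequency outputs $P_{<1}(\cdot)$ are handled in the same way and are easier.

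Once the problem is in the form $\sum_{N_2,N_3}|\int_\R\int_{\R^d}\mathbf 1_{[0,T]}(P_{N_3}n)(P_{N_2}v)\overline{P_{N_1}h}\,dx\,dt|$ (and its analogue for \eqref{B.E.W.}), I would split into cases according to which of $N_1,N_2,N_3$ is smallest, since the Fourier-support relation forces the other two to be comparable. If the wave frequency is the smallest, there is no resonance to use and Lemma~\ref{trilinear}(i),(iia),(iib) apply directly; this is exactly where both Schr\"odinger factors have to be placed in the endpoint norm $E$ --- one of them being the test function (in $U^2_S\hookrightarrow E$), the other being $v$ through the $\|v\|_{E^k}$ component of $\|v\|_{X^k_S}$ --- which is the reason the intersection space $X^k_S=Y^k_S\cap E^k$ is the right one. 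If instead the wave frequency is one of the two dominant ones, say $N_2\ll N_1\sim N_3$ with $N_3>2^2$ (the remaining finitely many low blocks being trivial), then $|\xi_1|\gg\langle\xi_2\rangle$, so Lemma~\ref{Recovery} gives $\max\{|\tau_1+|\xi_1|^2|,|\tau_2+|\xi_2|^2|,|\tau_3\pm|\xi_3||\}\gtrsim N_3^2$; putting $M=\varepsilon N_3^2$ and decomposing each factor into its $Q_{\ge M}$- and $Q_{<M}$-parts, the part carrying the large modulation gains $M^{-1/2}\sim N_3^{-1}$ by \eqref{mod}, which is precisely the derivative that was lost since $k=l+1/2$, and the resulting terms are exactly Lemma~\ref{trilinear}(iii)--(vi), according to which Schr\"odinger factor has the low frequency and which factor carries the large modulation. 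Finally, if the output frequency $N_1$ is the smallest, Lemma~\ref{trilinear}(i) applies directly. For \eqref{B.E.W.} the cases are the obvious analogues: output (wave) frequency smallest, or all three comparable --- both via Lemma~\ref{trilinear}(i) --- and output frequency comparable to one Schr\"odinger frequency and far above the other, which is resonant and is treated by Lemma~\ref{Recovery} and \eqref{mod}, with the $\om$ supplying the half-derivative.

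What remains is the dyadic summation. The sum over the smallest of the three frequencies is done by Cauchy--Schwarz exactly as in the proof of Lemma~\ref{trilinear} (using $k=(d-3)/2>0$, i.e.\ $d\ge4$); the sum over $N_1$ is immediate when $N_1$ is comparable to a large frequency, and when $N_1$ is the smallest it is closed by Schur's test for the Hardy-type kernel $K(N_1,N')=(N_1/N')^{\theta}\mathbf 1_{N'\gg N_1}$ with $\theta>0$, provided one carries the sequences $N^k\|P_Nv\|_{E}$, $N^k\|P_Nu\|_{V^2_S}$ and $N^l\|P_Nn\|_{V^2_{W_\pm}}$ in $\ell^2$ rather than estimating them by their full norms too soon.

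I expect the genuine obstacle to be the regime of \eqref{B.E.S.} in which the wave frequency $N_3$ is far below the common Schr\"odinger frequency $N_1\sim N_2$: there Lemma~\ref{Recovery} yields nothing, so the half-derivative loss cannot be recovered by modulation and must be absorbed entirely through the extra integrability of the endpoint Strichartz norm $E=L^2_tL^{2d/(d-2)}_x$. This is exactly the point at which an argument using $U^2_S$ alone would need the false embedding $\|h\|_E\lesssim\|h\|_{V^2_S}$, and it is what makes the intersection $X^k_S=Y^k_S\cap E^k$ necessary; the only other delicate point is that, at the scaling-critical regularity, the weighted dyadic sums leave no room, which is why the $\ell^2$-bookkeeping above must be done carefully.
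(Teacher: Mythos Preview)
Your outline is correct and follows the same route as the paper: dualize each $P_{N_1}$-piece via Corollary~\ref{V^2_A} (for $Y^k_S$) or the Strichartz dual (for $E^k$), split into the three frequency interactions, apply Lemma~\ref{trilinear}(i)/(iia) in the nonresonant cases and (iii)--(vi) after the modulation decomposition in the resonant case $N_2\ll N_1\sim N_3$, then sum. Your identification of the ``low wave frequency'' regime as the place where the $E^k$ component of $X^k_S$ is indispensable is exactly the point of the paper.

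One technical detail you skate over deserves a sentence. In the $E^k$ estimate for the resonant block $J_{3,E}$, the pieces $F_2$ and $F_4$ (where the outgoing $Q^S_{\ge M}$ lands, after Lemma~\ref{Mhigh}, on the test function $h_{N_1}$) require $\|h_{N_1}\|_{V^2_S}$ via Lemma~\ref{trilinear}(iv),(vi); but the Strichartz dual only hands you $\|h\|_E=1$. The paper fixes this by first using Corollary~\ref{Strichartz-S} on the Duhamel term itself, $\|P_{N_1}I_{T,S}(\cdot)\|_E\lesssim\|P_{N_1}I_{T,S}(\cdot)\|_{U^2_S}$, and \emph{then} dualizing via Corollary~\ref{U^2_A} to get a $V^2_S$ test function. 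You have the ingredient (you cite $U^2_S\hookrightarrow E$), but the direction in which it is used here is the opposite of what your sketch suggests. The cutoff $\mathbf 1_{[0,T]}$ is then transferred to the $V^2_S$ test function via \eqref{ct}, which preserves $T$-independence. With this adjustment your plan matches the paper's proof.
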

\begin{proof}
Let$\ u_{N_1}=P_{N_1}u,\  v_{N_2}=P_{N_2}v,\ n_{N_3}=P_{N_3}n.$
First, we prove \eqref{B.E.S.}. Since $\|\cdot\|_{X^k_S}=\|\cdot\|_{Y^k_S}+\|\cdot\|_{E^k}$, we need to show
\EQS{
    \| I_{T,S}(n,v)\|_{E^k} \lec \| n\|_{\dot{Y}^l_{W_{\pm}}}\| v\|_{X^k_S},\label{J-E}\\
    \| I_{T,S}(n,v)\|_{Y^k_S} \lec \| n\|_{\dot{Y}^l_{W_{\pm}}}\| v\|_{X^k_S}.\label{J-Y}
}
By the definition of $E^k$ norm, we have  
\EQ{
 (\text{LHS\ of}\ (\ref{J-E}))^2 \lec \|P_{<1}I_{T,S}(n,v)\|_E^2 + \sum_{N_1 \ge 1}N_1^{d-3}\| P_{N_1}I_{T,S}(n,v)\|_E^2.\label{J-E1}
}
Put
\EQQS{   
 J_{1,E} &:= \sum_{N_1 \ge 1} N_1^{d-3} \Bigl{\|} \int_0^t \1_{[0,T]}(t')
                 S(t-t') \sum_{N_2 \sim N_1} \sum_{N_3 \ll N_2} P_{N_1} \bigl( n_{N_3}(t') v_{N_2}(t')\bigr) dt' \Bigr{\|}_E^2, \\
 J_{2,E} &:= \sum_{N_1 \ge 1} N_1^{d-3} \Bigl{\|} \int_0^t \1_{[0,T]}(t')
                 S(t-t') \sum_{N_2 \gec N_1} \sum_{N_3 \sim N_2} P_{N_1} \bigl( n_{N_3}(t') v_{N_2}(t')\bigr) dt' \Bigr{\|}_E^2, \\
 J_{3,E} &:= \sum_{N_1 \ge 1} N_1^{d-3} \Bigl{\|} \int_0^t \1_{[0,T]}(t')
                 S(t-t') \sum_{N_2 \ll N_1} \sum_{N_3 \sim N_1} P_{N_1} \bigl( n_{N_3}(t') v_{N_2}(t')\bigr) dt' \Bigr{\|}_E^2. 
}
We will prove $J_{i,E}\lec \|n\|_{\dot{Y}_{W_{\pm}}^l}^2\|v\|_{X^k_S}^2$ for $i=1,2,3$ below.
By Proposition \ref{Strichartz} and Lemma \ref{trilinear} $(iia)$, we have 
\EQQ{
 J_{1,E} &\lec \sum_{N_1 \ge 1} N_1^{d-3}\sup_{\|u\|_E =1}\Bigl| \sum_{N_2 \sim N_1}\sum_{N_3 \ll N_2} 
                     \int_{\R}\int_{\R^d} \1_{[0,T]}n_{N_3}v_{N_2}\ol{u_{N_1}} dxdt \Bigr|^2\\
&\lec \|n\|_{\dot{Y}_{W_{\pm}}^l}^2\sum_{N_1 \ge 1}\sum_{N_2 \sim N_1} N_1^{d-3}
  \|v_{N_2}\|_E^2\sup_{\|u\|_E =1}\|u_{N_1}\|_E^2.\\
}
Since $\sup_{\|u\|_E =1} \|u_{N_1}\|_E \lec 1$, we obtain
\EQQ{
 J_{1,E} \lec \|n\|_{\dot{Y}_{W_{\pm}}^l}^2\sum_{N_2 \gec 1} N_2^{d-3}\|v_{N_2}\|_E^2
            \lec \|n\|_{\dot{Y}_{W_{\pm}}^l}^2\|v\|_{X^k_S}^2. 
}
By the triangle inequality, Proposition \ref{Strichartz} and Lemma \ref{trilinear} $(i)$, we have
\EQQ{
 J_{2,E}  &\lec  \sum_{N_1\ge 1} N_1^{d-3} \Bigl( \sum_{N_2 \gec N_1} \sum_{N_3\sim N_2} \Bigl{\|}
                             \int_0^t \1_{[0,T]}(t') S(t-t') P_{N_1} \bigl( n_{N_3}(t')v_{N_2}(t') \bigr)dt'
                               \Bigr{\|}_E\Bigr)^{2}\\
                  &\lec \sum_{N_1 \ge 1} N_1^{d-3} \Bigl( \sum_{N_2 \gec N_1}\sum_{N_3\sim N_2} \sup_{\|u\|_E=1} 
                           \Bigl| \int_{\R} \int_{\R^d} \1_{[0,T]}n_{N_3}v_{N_2}\ol{u_{N_1}}dxdt \Bigr|\Bigr)^{2}\\
&\lec \sum_{N_1 \ge 1}  \Bigl( \sum_{N_2 \gec N_1}\sum_{N_3\sim N_2} N_1^{(d-3)/2}N_3^{(d-4)/2}\, \|v_{N_2}\|_E\|n_{N_3}\|_{V^2_{W_{\pm}}}\Bigr)^{2}.
}
Since $\|\cdot\|_{\ell^2\ell^1}\le \|\cdot\|_{\ell^1\ell^2}$, by the Cauchy-Schwarz inequality, we obtain 
 \EQQ{
J_{2,E}^{1/2}  &\lec  \sum_{N_2 \gec 1}\sum_{N_3\sim N_2} \Bigl(
   \sum_{N_1 \lec N_2} N_1^{d-3} N_3^{d-4}\, \|v_{N_2}\|^2_E\|n_{N_3}\|^2_{V^2_{W_{\pm}}}\Bigr)^{1/2}\\
&\lec
 \sum_{N_2 \gec 1}\sum_{N_3\sim N_2}N_2^{(d-3)/2}N_3^{(d-4)/2}\, \|v_{N_2}\|_E\|n_{N_3}\|_{V^2_{W_{\pm}}}\\
&\lec \|n\|_{\dot{Y}_{W_{\pm}}^l}\|v\|_{X^k_S}.
}  
Next, we consider the estimate of $J_{3,E}$. We take $M=\e N_1^2$ for sufficiently small $\e >0$. 
Then, from Lemma \ref{Recovery}, we have  
\EQQS{
   &P_{N_1}Q_{<M}^S\bigl( (Q_{<M}^{W_{\pm}}n_{N_3})(Q_{<M}^S v_{N_2}) \bigr) \\
   &= P_{N_1}Q_{<M}^S \Bigl[ \F^{-1}\Bigl( \int_{\ta_1 = \ta_2 + \ta_3,\, \xi_1 = \xi_2 + \xi_3} 
       \widehat{(Q_{<M}^{W_{\pm}}n_{N_3})}(\ta_3, \xi_3) \widehat{(Q_{<M}^S v_{N_2})}(\ta_2, \xi_2) \Bigr) \Bigr]
   =0
}
when $N_1 \gg \LR{N_2}$.
Therefore, 
\EQQS{
 P_{N_1}(n_{N_3}v_{N_2}) = \sum_{i=1}^4 P_{N_1}F_i, 
}
where 
\EQQS{
 &F_1 := (Q_{\ge M}^{W_{\pm}}n_{N_3}) v_{N_2}, \qquad \qquad \ \ \,    
 F_2 := Q_{\ge M}^S\bigl((Q_{<M}^{W_{\pm}}n_{N_3}) v_{N_2}\bigr), \notag \\
 &F_3 := (Q_{<M}^{W_{\pm}}n_{N_3}) (Q_{\ge M}^S v_{N_2}),\qquad 
 F_4 := -Q_{\ge M}^S\bigl((Q_{<M}^{W_{\pm}}n_{N_3}) (Q_{\ge M}^S v_{N_2})\bigr).
}
For the estimate of $F_1$, 
we apply Proposition \ref{Strichartz} and Lemma \ref{trilinear} $(iii)$ to have
\EQQS{   
 &\sum_{N_1 \ge 1}N_1^{d-3}\Bigl{\|}\int_0^t\1_{[0,T]}(t')S(t-t')\sum_{N_2 \ll N_1}
    \sum_{N_3 \sim N_1} P_{N_1}F_1 dt' \Bigr{\|}_E^2  \\
 &\lec \sum_{N_1 \ge 1}N_1^{d-3}\sup_{\|u\|_E=1} \Bigl{|}\sum_{N_2 \ll N_1}\sum_{N_3 \sim N_1}
          \int_{\R} \int_{\R^d} \1_{[0,T]}(Q_{\ge M}^{W_{\pm}}n_{N_3}) v_{N_2} \ol{u_{N_1}}dxdt \Bigr{|}^2  \\
 &\lec \sum_{N_3 \gec 1} N_3^{d-3} \bigl(N_3^{-1/2}\|n_{N_3}\|_{V^2_{W_{\pm}}} \|v\|_{Y^k_S}\bigr)^2  \\
 &\lec \|n\|_{\dot{Y}^l_{W_{\pm}}}^2\|v\|_{X^k_S}^2.
} 
For the estimate of $F_2$,  
we apply Corollary \ref{Strichartz-S}, Corollary \ref{U^2_A}, Lemma \ref{Mhigh}, Lemma \ref{trilinear} $(iv)$ and    
\EQ{ \label{ct}
 \|\1_{[0,T]} u_{N_1}\|_{V^2_S} \lec \|u_{N_1}\|_{V^2_S} \lec \|u\|_{V^2_S} 
}
to have
\EQQS{
  &\sum_{N_1 \ge 1} N_1^{d-3}\Bigl{\|}\int_0^t\1_{[0,T]}(t')S(t-t')\sum_{N_2 \ll N_1}
    \sum_{N_3 \sim N_1} P_{N_1}F_2 dt' \Bigr{\|}_E^2 \\
  &\lec\sum_{N_1 \ge 1} N_1^{d-3}\Bigl{\|}\int_0^t\1_{[0,T]}(t')S(t-t')\sum_{N_2 \ll N_1}
    \sum_{N_3 \sim N_1} P_{N_1}F_2 dt' \Bigr{\|}_{U^2_S}^2 \\
  &\lec \sum_{N_1 \ge 1} N_1^{d-3}\sup_{\|u\|_{V^2_S}=1} \Bigl{|}\sum_{N_2 \ll N_1}\sum_{N_3 \sim N_1}
           \int_{\R} \int_{\R^d} \1_{[0,T]} \Big(Q_{\ge M}^S\bigl((Q_{<M}^{W_{\pm}}n_{N_3}) v_{N_2}\bigr)\Big) 
            \ol{u_{N_1}} dxdt \Bigr{|}^2 \\
  &\lec \sum_{N_3 \gec 1} N_3^{d-3} \bigl(N_3^{-1/2}\|n_{N_3}\|_{V^2_{W_{\pm}}} \|v\|_{E^k} \bigr)^2 \\ 
  &\lec \|n\|_{\dot{Y}^l_{W_{\pm}}}^2\|v\|_{X^k_S}^2.
}
For the estimate of $F_3$, 
we apply Proposition \ref{Strichartz} and Lemma \ref{trilinear} $(v)$ to have
\EQQS{
 &\sum_{N_1 \ge 1} N_1^{d-3}\Bigl{\|}\int_0^t\1_{[0,T]}(t')S(t-t')\sum_{N_2 \ll N_1}
    \sum_{N_3 \sim N_1} P_{N_1}F_3 dt' \Bigr{\|}_E^2 \\
 &\lec \sum_{N_1 \ge 1} N_1^{d-3}\sup_{\|u\|_E=1} \Bigl{|}\sum_{N_2 \ll N_1}
           \sum_{N_3 \sim N_1} \int_{\R} \int_{\R^d} \1_{[0,T]}(Q_{<M}^{W_{\pm}}n_{N_3}) (Q_{\ge M}^S v_{N_2})\ol{u_{N_1}}
            dxdt \Bigr{|}^2 \\
 &\lec \sum_{N_3 \gec 1} N_3^{d-3} \bigl(N_3^{-1/2} \|n_{N_3}\|_{V^2_{W_{\pm}}} \|v\|_{Y^k_S}\bigr)^2 \\
 &\lec \|n\|_{\dot{Y}^l_{W_{\pm}}}^2\|v\|_{X^k_S}^2.
}
For the estimate of $F_4$,  
we apply Corollary \ref{Strichartz-S}, Corollary \ref{U^2_A}, Lemma \ref{Mhigh}, Lemma \ref{trilinear} $(vi)$ and 
\eqref{ct} to have
\EQQS{
  &\sum_{N_1 \ge 1} N_1^{d-3}\Bigl{\|}\int_0^t\1_{[0,T]}(t')S(t-t')\sum_{N_2 \ll N_1}
    \sum_{N_3 \sim N_1} P_{N_1}F_4 dt' \Bigr{\|}_E^2 \\
  &\lec \sum_{N_1 \ge 1} N_1^{d-3}\Bigl{\|}\int_0^t\1_{[0,T]}(t')S(t-t')\sum_{N_2 \ll N_1}
    \sum_{N_3 \sim N_1} P_{N_1}F_4 dt' \Bigr{\|}_{U^2_S}^2 \\
  &\lec \sum_{N_1 \ge 1} N_1^{d-3}\sup_{\|u\|_{V^2_S}=1} \Bigl{|}\sum_{N_2 \ll N_1}\sum_{N_3 \sim N_1}
           \int_{\R} \int_{\R^d} \1_{[0,T]} \Bigl(Q_{\ge M}^S\bigl((Q_{<M}^{W_{\pm}}n_{N_3}) (Q_{\ge M}^S v_{N_2})\bigr)\Bigr)\ol{u_{N_1}}
            dxdt\Bigr{|}^2 \\
  &\lec \sum_{N_3 \gec 1} N_3^{d-3} \bigl(N_3^{-1/2} \|n_{N_3}\|_{V^2_{W_{\pm}}} \|v\|_{Y^k_S}\bigr)^2 \\
  &\lec \|n\|_{\dot{Y}^l_{W_{\pm}}}^2\|v\|_{X^k_S}^2.
}
Collecting the estimates of $F_1, F_2, F_3$ and $F_4$, we obtain $J_{3,E} \lec \|n\|_{\dot{Y}^l_{W_{\pm}}}^2\|v\|_{X^k_S}^2$. Thus,
\EQ{
\sum_{N_1 \ge 1}N_1^{d-3}\| P_{N_1}I_{T,S}(n,v)\|_E^2 \lec \|n\|_{\dot{Y}^l_{W_{\pm}}}^2\|v\|_{X^k_S}^2.\label{J-E2}
}
Note that we also have
\EQ{
\sum_{N_1 \ge 1}N_1^{d-3}\| P_{N_1}I_{T,S}(n,v)\|_{L_t^\infty L_x^2}^2 \lec \|n\|_{\dot{Y}^l_{W_{\pm}}}^2\|v\|_{X^k_S}^2\label{J-Y0}
}
in the same manner as the proof of \eqref{J-E2} since $(p_1,q_1)=(\infty,2)$ also  satisfies the assumption of Proposition \ref{Strichartz}.
Next, we show
\EQ{
 \| P_{<1} I_{T,S} (n,v)\|_E \lec \|n\|_{\dot{Y}^l_{W_{\pm}}} \|v\|_{X^k_S}. \label{ihE1} 
}
In the same manner as the proof of Lemma \ref{trilinear} $(iia)$, we have
\EQQ{
\| n \|_{L_t^\infty L_x^{d/2}}\lec \Big\| |\nabla_x|^{(d-4)/2}\sum_{N} P_N n \Big\|_{L_t^\infty L_x^2} \lec \Big(\sum_{N} N^{2l}\|P_N n\|^2_{V^2_{W_\pm}}\Big)^{1/2}=\|n\|_{\dot{Y}^l_{W_{\pm}}}.
}
Thus, by Proposition \ref{Strichartz} and the H\"older inequality, the left-hand side of \eqref{ihE1} is bounded by
\EQ{ 
&\sup_{\|u\|_E=1}\Bigl| \int_{\R} \int_{\R^d} \1_{[0,T]}nv \ol{P_{<1}u} dxdt \Bigr|\\
\lec & \| n \|_{L_t^\infty L_x^{d/2}} \|v\|_E \sup_{\|u\|_E=1}\|P_{<1}u\|_E \lec \|n\|_{\dot{Y}^l_{W_{\pm}}} \|v\|_{E^k}.\label{ihE2}
}
Thus, we obtain \eqref{ihE1}.
From \eqref{J-E1}, \eqref{J-E2} and \eqref{ihE1},
we conclude \eqref{J-E}.

Next, we prove \eqref{J-Y}. By the definition of $\|\cdot\|_{Y^k_S}$, we only need to show
\EQS{
&\sum_{N_1 \ge 1} N_1^{d-3}\|P_{N_1}I_{T,S}(n,v)\|^2_{V^2_S} \lec \| n\|^2_{\dot{Y}^l_{W_{\pm}}}\| v\|^2_{X^k_S},\label{J-Y1}\\
&\|P_{<1}I_{T,S}(n,v)\|^2_{V^2_S} \lec \| n\|^2_{\dot{Y}^l_{W_{\pm}}}\| v\|^2_{X^k_S}.\label{J-Y2}
}
By Corollary \ref{V^2_A} and Remark \ref{embed}, the left-hand side of \eqref{J-Y1} is bounded by
\EQQ{
 &\sum_{N_1 \ge 1} N_1^{d-3} \sup_{\|u\|_{U^2_S}=1}\Bigl| \int_{-\I}^{\I} \LR{u(t), 
           S(t)\bigl( S(-\cdot ) P_{N_1}I_{T,S}(n,v)\bigr)'(t)}_{L^2_x}dt\\
&\hspace*{16em}    -\lim_{t\to \I} \LR{u(t), P_{N_1}I_{T,S}(n,v)}_{L^2_x} \Bigr|^2,\\
\lec &\sum_{N_1 \ge 1} N_1^{d-3} \sup_{\|u\|_{U^2_S}=1}\Bigl( \Bigl| \int_{\R} \int_{\R^d} \1_{[0,T]}
          nv\, \ol{u_{N_1}}dxdt \Bigr|^2 + \|u\|_{L^{\I}_tL^2_x}^2 \|P_{N_1}I_{T,S}(n,v)\|_{L^{\I}_tL^2_x}^2  \Bigr) \\
 \lec& \sum_{N_1 \ge 1} N_1^{d-3}\sup_{\|u\|_{U^2_S}=1} \Bigl|\int_{\R} \int_{\R^d} \1_{[0,T]}nv\, \ol{u_{N_1}}dxdt\Bigr|^2 
           + \sum_{N_1 \ge 1} N_1^{d-3} \|P_{N_1} I_{T,S}(n,v)\|_{L^{\I}_tL^2_x}^2  \\
 \lec& \sum_{i=1}^3J_{i,Y} + \sum_{N_1 \ge 1} N_1^{d-3} \|P_{N_1} I_{T,S}(n,v)\|_{L^{\I}_tL^2_x}^2
}
where 
\EQQS{
 J_{1,Y} &:= \sum_{N_1 \ge 1} N_1^{d-3}\sup_{\|u\|_{U^2_S}=1}\Bigl|\sum_{N_2 \sim N_1}\sum_{N_3 \ll N_2}
                 \int_{\R} \int_{\R^d} \1_{[0,T]}n_{N_3}v_{N_2}\ol{u_{N_1}}dxdt\Bigr|^2, \\
 J_{2,Y} &:= \sum_{N_1 \ge 1} N_1^{d-3}\sup_{\|u\|_{U^2_S}=1}\Bigl|\sum_{N_2 \gec N_1}\sum_{N_3 \sim N_2}
                 \int_{\R} \int_{\R^d} \1_{[0,T]}n_{N_3}v_{N_2}\ol{u_{N_1}}dxdt\Bigr|^2, \\
 J_{3,Y} &:= \sum_{N_1 \ge 1} N_1^{d-3}\sup_{\|u\|_{U^2_S}=1}\Bigl|\sum_{N_2 \ll N_1}\sum_{N_3 \sim N_1}
                 \int_{\R} \int_{\R^d} \1_{[0,T]}n_{N_3}v_{N_2}\ol{u_{N_1}}dxdt\Bigr|^2.
}
By Corollary \ref{Strichartz-S} and Remark \ref{embed}, it follows that
\EQ{
\|u\| _{E} \lec \|u\|_{U^2_S}, \ \ \ \|u\|_{V^2_S}\lec \|u\|_{U^2_S}.\label{eq3.1}
}
We obtain $J_{i,Y} \lec \|n\|_{\dot{Y}^l_{W_{\pm}}}^2\|v\|_{X^k_S}^2$ in the same manner as 
the estimates for $J_{i,E}$ with $i=1,2,3$ if we use \eqref{eq3.1}.
Collecting \eqref{J-Y0} and the estimates above, we conclude \eqref{J-Y1}.
Next, we show \eqref{J-Y2}.
By Corollary \ref{V^2_A} and Remark \ref{embed}, we have   
\EQS{ \label{ihV2}
 &\| P_{<1} I_{T,S} (n,v)\|_{V^2_S} \notag \\
 &= \sup_{\|u\|_{U^2_S}=1} \Bigl| \int_{-\I}^{\I} \LR{u(t), S(t)\bigl( S(-\cdot )P_{<1}I_{T,S}(n,v)\bigr)'(t)}_{L^2_x} dt \notag \\
&\hspace*{8em}     - \lim_{t\to \I}\LR{u(t), \bigl(P_{<1}I_{T,S}(n,v)\bigr)(t)}_{L^2_x}\Bigr|  \notag \\
 &\lec \sup_{\|u\|_{U^2_S}=1} \Bigl( \Bigl| \int_{\R} \int_{\R^d} \1_{[0,T]}nv\ol{P_{<1}u} dxdt \Bigr|  
          + \|u\|_{L^{\I}_tL^2_x}\|P_{<1}I_{T,S}(n,v)\|_{L^{\I}_tL^2_x}\Bigr)  \notag \\
 &\lec \sup_{\|u\|_{E}=1} \Bigl| \int_{\R} \int_{\R^d} \1_{[0,T]}nv\ol{P_{<1}u}dxdt \Bigr|  
          + \|P_{<1}I_{T,S}(n,v)\|_{L^{\I}_tL^2_x}. 
}
By Proposition \ref{Strichartz}, we have  
\EQS{ \label{ihV3}
 \|P_{<1}I_{T,S}(n,v)\|_{L^{\I}_tL^2_x} \lec \sup_{\|u\|_E=1}\Bigl| \int_{\R} \int_{\R^d} \1_{[0,T]}nv \ol{P_{<1}u} dxdt \Bigr|. 
}
Collecting \eqref{ihV2}, \eqref{ihV3} and \eqref{ihE2}, we obtain \eqref{J-Y2}.
From \eqref{J-Y1} and \eqref{J-Y2}, we obtain \eqref{J-Y}.
From \eqref{J-E} and \eqref{J-Y}, we conclude \eqref{B.E.S.}.

Finally, we prove \eqref{B.E.W.}.
By Corollary \ref{U^2_A}, we only need to estimate $K_i \lec \|u\|^2_{X_S^k}\|v\|^2_{X_S^k}$ for $i=1,2,3$, where
\EQQS{
 K_1 &:=\sum_{N_3} N_3^{d-4}\sup_{\|n\|_{V^2_{W_{\pm}}}=1}\Bigl|\sum_{N_2 \gec N_3}
           \sum_{N_1 \sim N_2}\int_{\R} \int_{\R^d}\1_{[0,T]}u_{N_1}\ol{v_{N_2}  \om \, n_{N_3}}dxdt\Bigr|^2, \\
 K_2 &:=\sum_{N_3} N_3^{d-4}\sup_{\|n\|_{V^2_{W_{\pm}}}=1}\Bigl|\sum_{N_2 \sim N_3}
           \sum_{N_1 \ll N_2}\int_{\R} \int_{\R^d}\1_{[0,T]}u_{N_1}\ol{v_{N_2} \om \, n_{N_3}}dxdt\Bigr|^2, \\
 K_3 &:=\sum_{N_3} N_3^{d-4}\sup_{\|n\|_{V^2_{W_{\pm}}}=1}\Bigl|\sum_{N_2 \ll N_3}
           \sum_{N_1 \sim N_3}\int_{\R} \int_{\R^d}\1_{[0,T]}u_{N_1}\ol{v_{N_2} \om \, n_{N_3}}dxdt\Bigr|^2. 
}
By the triangle inequality, Lemma \ref{trilinear} $(i)$ and the Cauchy-Schwarz inequality, we have   
\EQQS{
 K_1^{1/2} &\lec \sum_{N_2}\sum_{N_1 \sim N_2} \Bigl{\{} \sum_{N_3 \lec N_2} N_3^{d-4}\sup_{\|n\|_{V^2_{W_{\pm}}}=1}  
                       \Bigl| \int_{\R} \int_{\R^d} \1_{[0,T]}u_{N_1}\ol{v_{N_2}} \ol{\om \, n_{N_3}} dxdt \Bigr|^2 \Bigr{\}}^{1/2} \\
              &\lec \sum_{N_2}\sum_{N_1 \sim N_2} \Bigl{\{} \sum_{N_3 \lec N_2} N_3^{d-4} 
                        ( N_3^{(d-4)/2} N_3 \|u_{N_1}\|_E \|v_{N_2}\|_E)^2 \Bigr{\}}^{1/2} \\
              &\lec \sum_{N_2}\sum_{N_1 \sim N_2} \bigl( N_2^{2d-6} \|u_{N_1}\|_E^2 \|v_{N_2}\|_E^2\bigr)^{1/2}\\
              &\lec \Big(\sum_{N}  N^{d-3} \|u_{N}\|_E^2\Big)^{1/2} 
\Big(\sum_{N} N^{d-3} \|v_{N}\|_E^2\Big)^{1/2} .
}
By Mihlin-H\"{o}rmander's multiplier theorem, it follows that
\EQ{\label{eK1}
\sum_{N<1}  N^{d-3} \|u_{N}\|_E^2
\lec \sum_{N<1}  N^{d-3} \|P_{<1}u\|_E^2
\lec \|P_{<1}u\|_E^2.
}
Thus, we conclude $K_1 \lec \|u\|^2_{X^k_S} \|v\|^2_{X^k_S}$.
Next, we estimate $K_2$.
Put $K_2=K_{2,1}+K_{2,2}$ where
\EQQS{
&K_{2,1}:=\sum_{N_3\lec 1} N_3^{d-4}\sup_{\|n\|_{V^2_{W_{\pm}}}=1}\Bigl|\sum_{N_2 \sim N_3}
           \sum_{N_1 \ll N_2}\int_{\R} \int_{\R^d}\1_{[0,T]}u_{N_1}\ol{v_{N_2} \om \, n_{N_3}}dxdt\Bigr|^2,\\
&K_{2,2}:=\sum_{N_3\gg 1} N_3^{d-4}\sup_{\|n\|_{V^2_{W_{\pm}}}=1}\Bigl|\sum_{N_2 \sim N_3}
           \sum_{N_1 \ll N_2}\int_{\R} \int_{\R^d}\1_{[0,T]}u_{N_1}\ol{v_{N_2} \om \, n_{N_3}}dxdt\Bigr|^2.
}
By Lemma \ref{trilinear} $(i)$, we have
\EQ{\label{k221}
K_{2,1}\lec &\sum_{N_2\lec 1} N_2^{d-4}
     \Big(N_2^{(d-4)/2}N_2 \Big\| \sum_{N_1 \ll N_2}u_{N_1}\Big\|_E \|v_{N_2}\|_E\Big)^2\\
\lec &\| P_{<1} u \|^2_E \sum_{N_2\lec 1} N_2^{2d-6}\|v_{N_2}\|^2_E\\
\lec &\|u\|^2_{X^k_S} \|v\|^2_{X^k_S}.
}
For the estimate of $K_{2,2}$,
we take $M=\e N_2^2$ for sufficiently small $\e >0$. 
Then, from Lemma \ref{Recovery}, we have  
\EQQS{
 &P_{N_1}Q_{<M}^S \bigl((Q_{<M}^S{v_{N_2}})(Q_{<M}^{W_{\pm}}{\om \, n_{N_3}})\bigr) \\
 &= P_{N_1}Q_{<M}^S \Bigl[ \F^{-1}\Bigl( \int_{\ta_1 = \ta_2 + \ta_3,\, \xi_1 = \xi_2 + \xi_3} 
       \widehat{(Q_{<M}^S {v_{N_2}})}(\ta_2, \xi_2)
       \widehat{(Q_{<M}^{W_{\pm}}{\om \, n_{N_3}})}(\ta_3, \xi_3)  \Bigr) \Bigr]
 =0,
}
when $N_2 \gg \LR{N_1}$.
Therefore, 
\EQQS{
P_{N_1}(v_{N_2} \om \, n_{N_3}) = \sum_{i=1}^4 P_{N_1}G_i, 
}
where 
\EQQS{
 &G_1 :=  {v_{N_2}(Q_{\ge M}^{W_{\pm}}\om \, n_{N_3})}, \qquad \qquad \ \ \, 
 G_2 := {Q_{\ge M}^S\bigl( v_{N_2}(Q_{<M}^{W_{\pm}}\om \, n_{N_3})} \bigr), \notag \\
 &G_3 := {(Q_{\ge M}^S v_{N_2})(Q_{<M}^{W_{\pm}}\om \, n_{N_3})} ,\qquad
 G_4 := {-Q_{\ge M}^S\bigl((Q_{\ge M}^Sv_{N_2})(Q_{<M}^{W_{\pm}}\om \, n_{N_3}) \bigr)}.
}
Therefore, it follows that $$K_{2,2}\le K_{2,2}^{(1)}+K_{2,2}^{(2)}+K_{2,2}^{(3)}+K_{2,2}^{(4)}$$
where
\EQQ{
 K_{2,2}^{(j)} &:= \sum_{N_3\gg 1}N_3^{d-4}\sup_{\|n\|_{V^2_{W_{\pm}}}=1} \Bigl|\sum_{N_2 \sim N_3}
                        \sum_{N_1 \ll N_2}\int_{\R} \int_{\R^d} \1_{[0,T]}u_{N_1} \ol{G_j} dxdt\Bigr|^2
}
for $j=1,2,3,4$.
Note that $N_3 \gg 1$ and $N_2\sim N_3$ implies $N_2 >2^2$.
By Lemma \ref{trilinear} $(iii)$ and \eqref{ct}, we have   
\EQS{ \label{k2a}
 K_{2,2}^{(1)} &\lec \sum_{N_2 > 2^2}N_2^{d-4}\bigl( N_2^{-1/2}N_2 \|u\|_{Y^k_S} \|v_{N_2}\|_E \bigr)^2 \notag \\
                  &\lec \sum_{N_2 > 2^2} N_2^{d-3} \|u\|_{Y^k_S}^2 \|v_{N_2}\|_E^2 
                   \lec \|u\|_{Y^k_S}^2 \|v\|_{E^k}^2.
}
We apply Lemma \ref{Mhigh}, Lemma \ref{trilinear} $(v)$ and \eqref{ct}, then we have 
\EQS{ \label{k2b}
 K_{2,2}^{(2)} \lec \sum_{N_2 > 2^2}N_2^{d-4}\bigl( N_2^{-1/2}N_2\|u\|_{Y^k_S}\|v_{N_2}\|_E \bigr)^2
                  \lec \|u\|_{Y^k_S}^2\|v\|_{E^k}^2.
}
By Lemma \ref{trilinear} $(iv)$, we have  
\EQS{ \label{k2c}
 K_{2,2}^{(3)} \lec \sum_{N_2 > 2^2}N_2^{d-4}\bigl( N_2^{-1/2} N_2 \|u\|_{E^k} \|v_{N_2}\|_{V^2_S} \bigr)^2 
                  \lec \|u\|_{E^k}^2\|v\|_{Y^k_S}^2.
}
Applying Lemma \ref{Mhigh}, Lemma \ref{trilinear} $(vi)$ and \eqref{ct}, we obtain 
\EQ{ \label{k2d}
 K_{2,2}^{(4)} \lec \sum_{N_2 > 2^2}N_2^{d-4}\bigl( N_2^{-1/2}N_2\|u\|_{Y^k_S}\|v_{N_2}\|_{V^2_S} \bigr)^2 
                  \lec \|u\|_{Y^k_S}^2\|v\|_{Y^k_S}^2.
}
Hence, collecting \eqref{k221}, \eqref{k2a}, \eqref{k2b}, \eqref{k2c} and \eqref{k2d}, 
we have $K_2 \lec \|u\|_{X^k_S}^2\|v\|_{X^k_S}^2$.
By symmetry, we also obtain $K_3\lec \|u\|_{X^k_S}^2\|v\|_{X^k_S}^2$ in the same manner as the estimate of $K_2$.
\end{proof}
Next, we consider the inhomogeneous case.
\begin{cor}  \label{B.E.S.W.}
Let $d \ge 4,\ k=(d-3)/2$ and $l=(d-4)/2.$ Then for all $0<T<\I$, it holds that
\EQS{
&\| I_{T,S}(n,v)\|_{X^k_S}\lec \| n\|_{{Y}^l_{W_{\pm}}}\| v\|_{X^k_S}, \label{H.B.E.S.} \\
&\|I_{T,\, W_{\pm}}(u,v)\|_{Z^l_{W_{\pm}}} \lec \|u\|_{X^k_S}\|v\|_{X^k_S}.\label{H.B.E.W.}
} 
\end{cor}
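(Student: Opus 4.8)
The plan is to re-run the proof of Proposition~\ref{B.E.homo}, tracking where the homogeneous norms $\|n\|_{\dot{Y}^l_{W_{\pm}}}$ and $\|I_{T,W_{\pm}}(u,v)\|_{\dot{Z}^l_{W_{\pm}}}$ were used and replacing them by their inhomogeneous counterparts. One should note at the outset that it is \emph{not} enough to invoke an inequality $\|n\|_{\dot{Y}^l_{W_{\pm}}}\lec\|n\|_{Y^l_{W_{\pm}}}$, which is false for small $l$ (e.g.\ $d=4$, $l=0$). Rather, in the proof of Proposition~\ref{B.E.homo} the frequencies $\ll 1$ of $n$ appear only through Lemma~\ref{trilinear}~$(iia)$ and the bound $\|n\|_{L^{\I}_tL^{d/2}_x}\lec\|n\|_{\dot{Y}^l_{W_{\pm}}}$, while elsewhere $n$ occurs only at frequencies $\gec 1$, where $\|\cdot\|_{\dot{Y}^l_{W_{\pm}}}$ and $\|\cdot\|_{Y^l_{W_{\pm}}}$ agree; this is precisely why Lemma~\ref{trilinear} was stated with both $(iia)$ and $(iib)$.

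For \eqref{H.B.E.S.} I would repeat the estimates of $J_{i,E}$, $J_{i,Y}$ ($i=1,2,3$) and of the output-frequency-$<1$ piece verbatim, with three changes. In the low--high term $J_{1,E}$ (and $J_{1,Y}$) replace Lemma~\ref{trilinear}~$(iia)$ by Lemma~\ref{trilinear}~$(iib)$, which gives $\|n\|_{Y^l_{W_{\pm}}}$ in place of $\|n\|_{\dot{Y}^l_{W_{\pm}}}$. In $J_{2,E},J_{3,E}$ (and $J_{2,Y},J_{3,Y}$) only frequencies $N_3\gec 1$ of $n$ enter, so each occurring sum $\sum_{N_3\gec 1}N_3^{2l}\|n_{N_3}\|^2_{V^2_{W_{\pm}}}$ is $\le\|n\|^2_{Y^l_{W_{\pm}}}$. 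Finally, the bound $\|n\|_{L^{\I}_tL^{d/2}_x}\lec\|n\|_{\dot{Y}^l_{W_{\pm}}}$ used in \eqref{ihE1}--\eqref{ihE2} is replaced by $\|n\|_{L^{\I}_tL^{d/2}_x}\lec\|n\|_{Y^l_{W_{\pm}}}$, which I would prove exactly as in Lemma~\ref{trilinear}~$(iib)$: split $n=P_{<1}n+\sum_{N\ge 1}P_Nn$, use the Sobolev embedding $\dot H^{(d-4)/2}\hookrightarrow L^{d/2}$, treat the $P_{<1}$ part via Remark~\ref{embed} (the multiplier $|\xi|^{(d-4)/2}\psi(\xi)$ being bounded since $l\ge 0$) and the high part via $L^2_x$ orthogonality and Remark~\ref{embed}. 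With these substitutions every step of the proof of \eqref{B.E.S.} carries over and yields \eqref{H.B.E.S.}.

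For \eqref{H.B.E.W.} I would first split $\|I_{T,W_{\pm}}(u,v)\|_{Z^l_{W_{\pm}}}^2\lec\|P_{<1}I_{T,W_{\pm}}(u,v)\|_{U^2_{W_{\pm}}}^2+\sum_{N\ge 1}N^{2l}\|P_NI_{T,W_{\pm}}(u,v)\|_{U^2_{W_{\pm}}}^2$. The second term is $\le\|I_{T,W_{\pm}}(u,v)\|_{\dot{Z}^l_{W_{\pm}}}^2\lec\|u\|_{X^k_S}^2\|v\|_{X^k_S}^2$ by \eqref{B.E.W.}. For the first term, using $\bigl(W_{\pm}(-\cdot)I_{T,W_{\pm}}(u,v)\bigr)'(t)=\pm\1_{[0,T]}(t)W_{\pm}(-t)\,\om\bigl(u(t)\bar v(t)\bigr)$, Corollary~\ref{U^2_A} (the function $P_{<1}I_{T,W_{\pm}}(u,v)$ lying in the admissible class, as in the proof of Proposition~\ref{B.E.homo}), and moving $P_{<1}$ and $\om$ onto the test function,
\EQQS{
\|P_{<1}I_{T,W_{\pm}}(u,v)\|_{U^2_{W_{\pm}}}=\sup_{\|n\|_{V^2_{W_{\pm}}}=1}\Bigl|\int_{\R}\int_{\R^d}\1_{[0,T]}\,u\,\bar v\,\overline{\om P_{<1}n}\,dxdt\Bigr|.
}
I would then bound the right-hand side by the H\"older inequality in $L^2_tL^{2d/(d-2)}_x\cdot L^2_tL^{2d/(d-2)}_x\cdot L^{\I}_tL^{d/2}_x$, the Sobolev embedding $\dot H^{(d-4)/2}\hookrightarrow L^{d/2}$, boundedness of the multiplier $|\xi|^{(d-2)/2}\psi(\xi)$, and Remark~\ref{embed}, obtaining $\|u\|_E\|v\|_E\,\|\om P_{<1}n\|_{L^{\I}_tL^{d/2}_x}\lec\|u\|_{X^k_S}\|v\|_{X^k_S}\|n\|_{V^2_{W_{\pm}}}$, where $\|u\|_E\lec\|u\|_{X^k_S}$ follows from the definition of $E^k$ and Littlewood--Paley theory since $k\ge 0$; taking the supremum finishes the estimate.

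The main obstacle, and the only genuinely new point, is this low-output-frequency estimate for the wave Duhamel term: one must observe that the half-derivative loss carried by $\om=|\nabla_x|$ is harmless at output frequency $<1$ (the multiplier $|\xi|^{(d-2)/2}\psi(\xi)$ being bounded absorbs it), and that the endpoint Strichartz space $E=L^2_tL^{2d/(d-2)}_x$ is precisely what controls both Schr\"odinger inputs $u$ and $v$ against this low-frequency wave factor. Everything else is the bookkeeping above, replacing $\dot{Y}^l_{W_{\pm}}$ by $Y^l_{W_{\pm}}$ and Lemma~\ref{trilinear}~$(iia)$ by $(iib)$ throughout the proof of Proposition~\ref{B.E.homo}.
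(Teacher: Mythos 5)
Your proposal is correct and follows essentially the same route as the paper: the high frequencies are handled by the homogeneous estimates of Proposition \ref{B.E.homo}, the low input wave frequencies in \eqref{H.B.E.S.} are treated by replacing Lemma \ref{trilinear} $(iia)$ with $(iib)$ (exactly the paper's device for $d=4$, where $l=0$ makes $\|\cdot\|_{\dot{Y}^0_{W_\pm}}\lec\|\cdot\|_{Y^0_{W_\pm}}$ fail), and the $P_{<1}$ output of the wave Duhamel term is estimated by Corollary \ref{U^2_A}, H\"older in $E\cdot E\cdot L^\infty_t L^{d/2}_x$ and Sobolev with the $\om$ loss absorbed by the low-frequency cutoff, just as in the paper's proof of \eqref{ihU}. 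The only difference is organizational (you re-run the whole proof with $Y^l_{W_\pm}$ in place of $\dot Y^l_{W_\pm}$, whereas the paper splits $n$ into low and high frequencies and reuses \eqref{B.E.S.} as a black box for the high part), which does not change the substance.
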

\begin{proof}
First we consider \eqref{H.B.E.W.}.
From Proposition \ref{B.E.homo}, we have
\EQQ{
\Bigl{\|}\sum_{N \ge 1} P_N I_{T,\, W_{\pm}}(u,v)\Bigr{\|}_{Z^l_{W_{\pm}}} 
   \sim \Bigl{\|}\sum_{N \ge 1} P_N I_{T,\, W_{\pm}}(u,v)\Bigr{\|}_{\dot{Z}^l_{W_{\pm}}}
   \lec \|u\|_{X^k_S}\|v\|_{X^k_S}.
}
Hence, we only need to show the following. 
\EQS{
 \|P_{<1}I_{T,W_{\pm}}(u,v)\|_{U^2_{W_{\pm}}} \lec \|u\|_{X^k_S}\|v\|_{X^k_S}. \label{ihU}
}
By Corollary \ref{U^2_A} and H\"older's inequality, we have 
\EQS{ 
 (\text{LHS of}\ \eqref{ihU}) 
 &= \sup_{\|n\|_{V^2_{W_{\pm}}}=1} \Bigl| \int_{\R} \int_{\R^d} \1_{[0,T]}u\, \bar{v}\, \ol{P_{<1}\om \, n}\, dxdt \Bigr|  \notag \\
 &\lec \sup_{\|n\|_{V^2_{W_{\pm}}}=1} \|u\|_E\|v\|_E\|P_{<1}\om \, n\|_{L^{\I}_tL^{d/2}_x}.
          \label{ihU2}
}
Since
\EQQ{
\|u\|_E\le \|P_{<1} u\|_{E}+\Big(\sum_{N\ge 1}N^{-2k}\Big)^{1/2}\Big(\sum_{N \ge 1}N^{2k}\|P_N u\|_E^2\Big)^{1/2}
}
by the Cauchy-Schwarz inequality, we have 
\EQS{
 \|u\|_E \lec \|u\|_{X^k_S},\ \ \ \ \  \|v\|_E \lec \|v\|_{X^k_S}.  \label{ihU3}
}
By the Sobolev inequality and Remark \ref{embed}, we have
\EQ{ \label{ihU4}
 \|P_{<1}\om \, n\|_{L^{\I}_tL^{d/2}_x} 
  \lec \bigl{\|} |\na_x|^{(d-4)/2} P_{<1}\om \, n \bigr{\|}_{L^{\I}_tL^2_x}
  \lec \|P_{<1}n\|_{L^{\I}_tL^2_x}
  \lec \|n\|_{V^2_{W_{\pm}}}. 
}
Hence, collecting \eqref{ihU2}, \eqref{ihU3} and \eqref{ihU4}, we obtain \eqref{ihU}.

Next, we consider \eqref{H.B.E.S.}.
From \eqref{B.E.S.}, we obtain
\EQQ{
\Big\| I_{T,S}\Big(\Big(\sum_{1 \lec N} P_N n\Big),v\Big)\Big\|_{X^k_S}
  \lec \Big\| \sum_{1 \lec N} P_N n \Big\|_{\dot{Y}^l_{W_{\pm}}}\| v\|_{X^k_S}
  \lec \| n\|_{{Y}^l_{W_{\pm}}}\| v\|_{X^k_S}.
}
Therefore, we only need to show
\EQ{\label{B.E.S.L.}
\Big\| I_{T,S}\Big(\sum_{N\ll 1} P_N n,v\Big) \Big\|_{X^k_S}\lec \| n\|_{{Y}^l_{W_{\pm}}}\| v\|_{X^k_S}.
}
Note that \eqref{B.E.S.L.} easily follows from \eqref{B.E.S.} for $d\ge 5$ because
$l>0$ and
\EQQ{
\Big\| \sum_{N\ll 1} P_N n \Big\|_{\dot{Y}^l_{W_{\pm}}} 
 &\lec \Big( \sum_{N\ll 1} N^{2l} \|P_N n \|^2_{V^2_{W_{\pm}}}\Big)^{1/2}\\
&\lec \|P_{<1} n\|_{V^2_{W_{\pm}}} \Big( \sum_{N\ll 1} N^{2l}\Big)^{1/2}\\
&\lec \| n\|_{{Y}^l_{W_{\pm}}}.
}
However, we need more computation for $d=4$.
We show \eqref{B.E.S.L.} by an almost same manner as the proof of \eqref{B.E.S.}. By the definition of $X^k_S$ norm, we only need to show
\EQS{
&\Big\| P_{<1} I_{T,S}\Big(\sum_{N\ll 1} P_N n,v\Big) \Big\|_{E}\lec \| n\|_{{Y}^l_{W_{\pm}}}\| v\|_{X^k_S}\label{B.E.S.L.1}\\
&\Big(\sum_{N_1\ge 1}N_1^{2k}\Big\| P_{N_1} I_{T,S}\Big(\sum_{N\ll 1} P_N n,v\Big) \Big\|^2_{E}\Big)^{1/2}\lec \| n\|_{{Y}^l_{W_{\pm}}}\| v\|_{X^k_S}\label{B.E.S.L.2}\\
&\Big\| P_{<1} I_{T,S}\Big(\sum_{N\ll 1} P_N n,v\Big) \Big\|_{V^2_S}\lec \| n\|_{{Y}^l_{W_{\pm}}}\| v\|_{X^k_S}\label{B.E.S.L.3}\\
&\Big(\sum_{N_1\ge 1}N_1^{2k}\Big\| P_{N_1} I_{T,S}\Big(\sum_{N\ll 1} P_N n,v\Big) \Big\|^2_{V^2_S}\Big)^{1/2}\lec \| n\|_{{Y}^l_{W_{\pm}}}\| v\|_{X^k_S}.\label{B.E.S.L.4}
}
Since
\EQQ{
\Big\|\sum_{N\ll 1} P_N n\Big\|_{L_t^\infty L_x^{d/2}}\lec
\Big\| |\nabla_x|^{(d-4)/2}\sum_{N\ll 1} P_N n\Big\|_{L_t^\infty L_x^{2}}\lec
\| P_{<1} n\|_{V^2_{W_{\pm}}}\lec \|n\|_{Y^l_{W_{\pm}}},
}
we have \eqref{B.E.S.L.1} in the same manner as \eqref{ihE2}.
We also have \eqref{B.E.S.L.3} in the same manner as \eqref{ihV2}, \eqref{ihV3} and  \eqref{ihE2}.
Since $P_{N_3}\sum_{N\ll 1} P_N n=0$ for $N_3 \gec 1$, the left-hand side of \eqref{B.E.S.L.2} is equal to $J^*_{1,E}$ where
\EQQ{
J^*_{1,E}:=& \biggl( \sum_{N_1 \ge 1} N_1^{d-3} \Bigl{\|}
 \int_0^t \1_{[0,T]}(t')S(t-t')\\
&\cross \sum_{N_2 \sim N_1} \sum_{N_3 \ll N_2} P_{N_1} \Bigl(P_{N_3}\Big(\sum_{N\ll 1} P_N n\Big)(t') v_{N_2}(t')\Bigr) dt' \Bigr{\|}_E^2\biggr)^{1/2}.
}
We obtain
\EQ{\label{Jstar}
J^*_{1,E} \lec \Big\| \sum_{N\ll 1} P_N n \Big\|_{{Y}^l_{W_{\pm}}}\| v\|_{X^k_S} \lec \| n\|_{{Y}^l_{W_{\pm}}}\| v\|_{X^k_S}
}
in the same manner as the estimate of $J_{1,E}$ by using $(iib)$ of Lemma \ref{trilinear} instead of $(iia)$.
Thus, we obtain \eqref{B.E.S.L.2}. 
We also obtain \eqref{B.E.S.L.4} in the same manner as \eqref{J-Y1} by using \eqref{Jstar} instead of the estimate for $J_{1,E}$.
\end{proof}

\section{the proof of the main theorem}
By the Duhamel principle, 
we consider the following integral equation corresponding to \eqref{redn} on the time interval 
$[0, T]$ with $0< T< \I:$
\EQ{\label{integral_eq}
  (u,n_\pm)= (\Phi_1(u, n_{\pm}), \Phi_{2\pm}(u)), 
}
where
\EQQS{
  &\Phi_1(u, n_{\pm}) := S(t)u_0+I_{T,S}(n_+, u)(t)+I_{T,S}(n_-, u)(t), \\
  &\Phi_{2\pm}(u) := W_{\pm}(t)n_{\pm 0}+I_{T,W_{\pm}}(u, u)(t).
}
\begin{prop}\label{main_prop1}
    Let $d \ge 4, k=(d-3)/2$ and $l=(d-4)/2$. \\
 (i) (existence) 
Let $\de > 0$ be sufficiently small.
Then, for any $0<T<\infty$ and any initial data 
$$(u_0, n_{\pm 0}) \in B_{\de }(H^k(\R^d) \cross \dot{H}^l(\R^d)) \
      (\text{resp. } B_{\de }(H^k(\R^d) \cross H^l(\R^d))),$$
there exists a solution to \eqref{integral_eq} on $[0,T]$ satisfying
\EQQ{
&(u, n_{\pm}) \in X^k_S([0, T ]) \cross \dot{Y}^l_{W_{\pm}}([0, T ])
      \subset C([0, T ]; H^k(\R^d)) \cross C([0, T ]; \dot{H}^l(\R^d))\\
&\text{(resp.} \ (u, n_{\pm}) \in  X^k_S([0, T ]) \cross Y^l_{W_{\pm}}([0, T ]) \subset C([0, T ]; H^k(\R^d)) \cross C([0, T ]; H^l(\R^d))\text{).}
}
 (ii) (uniqueness) 
Let
$$(u, n_{\pm}), (v, m_{\pm}) \in X^k_S([0, T ]) \cross \dot{Y}^l_{W_{\pm}}([0, T ]) \ (\text{resp. } \in X^k_S([0, T]) \cross Y^l_{W_{\pm}}([0, T]))$$
be solutions to \eqref{integral_eq} on $[0,T]$ for some $T>0$
with the same initial data.
Then $(u(t), n_{\pm}(t))=(v(t), m_{\pm}(t))$ on $t\in [0,T]$. \\
 (iii) (continuous dependence of the solution on the initial data) 
    The flow map obtained by (i):
$$B_{\de }(H^k(\R^d) \cross \dot{H}^l(\R^d)) 
 \ni (u_0, n_{\pm 0}) \mapsto
           (u, n_{\pm})\in X^k_S([0, T ]) \cross \dot{Y}^l_{W_{\pm}}([0, T ])$$
$$(\text{resp. } B_{\de }(H^k(\R^d) \cross H^l(\R^d)) 
           \ni (u_0, n_{\pm 0}) \mapsto
           (u, n_{\pm})\in X^k_S([0, T ]) \cross Y^l_{W_{\pm}}([0, T ]) )$$
    is Lipschitz continuous.  \\
 (iv) (persistence)
For any $a \ge 0$, there exists $\de=\de(a)>0$ such that if
$$(u_0, n_{\pm 0}) \in B_{\de }(H^k(\R^d) \cross H^l(\R^d)) \cap H^{k+a}(\R^d) \cross H^{l+a}(\R^d),$$
then the solution $(u, n_{\pm})$ obtained by (i) is in
$$X^{k+a}_S([0, T ]) \cross Y^{l+a}_{W_{\pm}}([0, T]) \subset C([0,T];H^{k+a}(\R^d))\cross C([0,T];H^{l+a}(\R^d))$$ for any $0<T<\infty$.
\end{prop}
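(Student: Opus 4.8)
\emph{Strategy.} All four assertions follow from the contraction mapping principle applied to the map $\Phi:=(\Phi_1,\Phi_{2+},\Phi_{2-})$ of \eqref{integral_eq}, together with the bilinear estimates of Proposition \ref{B.E.homo} and Corollary \ref{B.E.S.W.}, the linear estimates $\|\1_{[0,\infty)}S(\cdot)u_0\|_{X^k_S}\lesssim\|u_0\|_{H^k}$ and $\|\1_{[0,\infty)}W_{\pm}(\cdot)n_{\pm0}\|_{\dot{Z}^l_{W_{\pm}}}\lesssim\|n_{\pm0}\|_{\dot{H}^l}$ (resp.\ with $H^l$, $Z^l_{W_{\pm}}$; each Littlewood--Paley piece of the free evolution is an $L^2_x$-multiple of a $U^2$-atom, and its $E$-norm is controlled by the endpoint case of Proposition \ref{Strichartz}), the short-interval smallness of Proposition \ref{unique}, and the embeddings $X^k_S\hookrightarrow C(\R;H^k)$ and $\dot{Z}^l_{W_{\pm}}\hookrightarrow\dot{Y}^l_{W_{\pm}}\hookrightarrow C(\R;\dot{H}^l)$ of Remark \ref{embed}. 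I work in the Banach space $\mathcal{Y}(I):=X^k_S(I)\times\dot{Z}^l_{W_+}(I)\times\dot{Z}^l_{W_-}(I)$ (resp.\ with $Z^l_{W_{\pm}}$) over an interval $I$, writing $\mathcal{Y}:=\mathcal{Y}([0,T])$; all constants below are independent of $T$, since those in Proposition \ref{B.E.homo} and Corollary \ref{B.E.S.W.} are.

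\emph{(i) Existence.} On the ball $B_r\subset\mathcal{Y}$ the above estimates give $\|\Phi_1(u,n_\pm)\|_{X^k_S}\lesssim\|u_0\|_{H^k}+\sum_{\pm}\|n_\pm\|_{\dot{Z}^l_{W_{\pm}}}\|u\|_{X^k_S}$ and $\|\Phi_{2\pm}(u)\|_{\dot{Z}^l_{W_{\pm}}}\lesssim\|n_{\pm0}\|_{\dot{H}^l}+\|u\|_{X^k_S}^2$, hence $\|\Phi(w)\|_{\mathcal{Y}}\le C_0\delta+C_1 r^2$, while bilinearity yields $\|\Phi(w)-\Phi(w')\|_{\mathcal{Y}}\le C_1 r\|w-w'\|_{\mathcal{Y}}$. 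Taking $r=2C_0\delta$ and then $\delta$ so small that $C_1 r\le\frac{1}{2}$, the map $\Phi$ sends $B_r$ into itself and is a contraction; its fixed point solves \eqref{integral_eq} on $[0,T]$, lies in $\mathcal{Y}$, and by the embeddings above belongs to $C([0,T];H^k)\times C([0,T];\dot{H}^l)^{2}$. Since $r$ and $\delta$ are $T$-independent, this holds for every $0<T<\infty$.

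\emph{(ii) Uniqueness and (iii) Lipschitz dependence.} For (ii) I argue by continuation. Let $t^*:=\sup\{\tau\in[0,T]:(u,n_\pm)\equiv(v,m_\pm)\text{ on }[0,\tau]\}$; by continuity the solutions agree on $[0,t^*]$, so it suffices to exclude $t^*<T$. On $[t^*,T]$ both triples satisfy \eqref{integral_eq} from the common data at $t^*$, hence $w:=(u-v,n_+-m_+,n_--m_-)$ has vanishing data at $t^*$, and Proposition \ref{unique} gives $\|w\|_{\mathcal{Y}([t^*,t^*+h])}\to0$ as $h\downarrow0$. Subtracting the two fixed-point identities, bilinearity turns this into a linear equation $w=\mathcal{L}_h(w)$ on $[t^*,t^*+h]$ whose coefficients are the two solutions; estimating $\mathcal{L}_h$ by Corollary \ref{B.E.S.W.}, treating the terms carrying the free evolution $W_{\pm}(\cdot-t^*)m_\pm(t^*)$ of the common data --- whose $V^2$-type norm does not shrink with $h$ --- by a direct Strichartz estimate that gains smallness from integrability in $t$ over $[t^*,t^*+h]$, and bounding the remaining coefficients, namely the Duhamel parts of the solutions (which have zero data at $t^*$), by Proposition \ref{unique}, one obtains $\|w\|_{\mathcal{Y}([t^*,t^*+h])}\le\frac{1}{2}\|w\|_{\mathcal{Y}([t^*,t^*+h])}$ for $h$ small, so $w\equiv0$ there, contradicting maximality of $t^*$. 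This absorption of the not-necessarily-small solution coefficients is the main technical point. For (iii), two data in $B_\delta$ give solutions in $B_r$ by (i); subtracting their integral equations and absorbing the quadratic terms via $r\lesssim\delta\ll1$ yields $\|w\|_{\mathcal{Y}}\lesssim\|u_0-u_0'\|_{H^k}+\sum_{\pm}\|n_{\pm0}-n_{\pm0}'\|_{\dot{H}^l}$, i.e.\ the flow map is Lipschitz.

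\emph{(iv) Persistence.} I re-run the contraction of (i) in $\mathcal{Y}_a(I):=X^{k+a}_S(I)\times Z^{l+a}_{W_+}(I)\times Z^{l+a}_{W_-}(I)$ with the same data, which is still small at the $(k,l)$-level. The bilinear estimates hold in the mixed-regularity form, e.g.\ $\|I_{T,S}(n,v)\|_{X^{k+a}_S}\lesssim\|n\|_{Z^{l+a}_{W_{\pm}}}\|v\|_{X^k_S}+\|n\|_{Z^l_{W_{\pm}}}\|v\|_{X^{k+a}_S}$ and analogously for $I_{T,W_{\pm}}$, obtained by the same Littlewood--Paley argument as in Section 3 with the extra $a$ derivatives placed on the higher-frequency factor (the Cauchy--Schwarz summations in Lemma \ref{trilinear} are unaffected). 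Writing $R_a$ for the $\mathcal{Y}_a([0,T])$-norm of the solution from (i), these estimates give $R_a\lesssim\|u_0\|_{H^{k+a}}+\sum_{\pm}\|n_{\pm0}\|_{H^{l+a}}+\delta R_a$, the $(k,l)$-level factors being bounded by $r\lesssim\delta$, so $R_a<\infty$ once $\delta=\delta(a)$ is small enough; the embeddings of Remark \ref{embed}, applied frequency by frequency, then yield the stated higher-regularity continuity, for every $0<T<\infty$.
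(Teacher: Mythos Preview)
Your overall strategy matches the paper's: contraction on $B_r$ for (i), continuation plus Proposition~\ref{unique} for (ii), standard absorption for (iii), and the mixed-regularity bilinear estimates \eqref{per1}--\eqref{per2} for (iv). The only point of substance is (ii).

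The paper, after translating to $T'=0$, simply applies \eqref{con1}--\eqref{con2} on $[0,\tau]$ and cites Proposition~\ref{unique} to make the coefficients $C\|n_{\pm}\|_{Y^l_{W_{\pm}}([0,\tau])}$ and $C\|v\|_{X^k_S([0,\tau])}$ at most $1/4$ for $\tau$ small. You correctly observe that Proposition~\ref{unique} requires vanishing data at $t=0$, which neither $n_{\pm}$ nor $v$ satisfies, and you propose to split each coefficient into its free-evolution and Duhamel parts, handling the latter by Proposition~\ref{unique} and the former by ``a direct Strichartz estimate that gains smallness from integrability in $t$ over $[t^*,t^*+h]$.'' This last step is not supported by the paper's toolbox: every trilinear bound in Lemma~\ref{trilinear} places the wave factor in $L^\infty_t L^{d/2}_x$ or $V^2_{W_{\pm}}$, neither of which shrinks on short intervals, while the two Schr\"odinger factors already occupy the $L^2_t$ endpoint $E$, so H\"older leaves no room for a finite-$p$ time norm on the wave side; replacing $V^2_{W_{\pm}}$ by a wave Strichartz norm would cost derivatives and break the critical balance (see the discussion in the introduction). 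In practice, the paper's argument is best read as uniqueness within the small-data ball $B_r$ of~(i), where $\|n_{\pm}\|,\|v\|\le r=1/(4C)$ already holds without any appeal to short intervals; your sketch, while more candid about the issue than the paper, does not close the stronger unconditional-uniqueness claim either.
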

\begin{rem}\label{time_rev}
Due to the time reversibility of the Zakharov equation, Proposition \ref{main_prop1} holds on corresponding time interval $[-T,0]$.
\end{rem}
\begin{rem}\label{global_sol}
By $(i)$ in Proposition \ref{main_prop1} and Remark \ref{time_rev},
we have solutions to \eqref{integral_eq} on $[0,T]$ and $[-T,0]$ for any $T>0$. Since we can take any large $T$ and have the uniqueness, 
the solution $(u(t), n_{\pm}(t)) \in C((-\infty,\infty);H^k(\R^d))\times C((-\infty,\infty);\dot{H}^l(\R^d))$ (resp. $C((-\infty,\infty);H^k(\R^d))\times C((-\infty,\infty);{H}^l(\R^d))$) can be defined uniquely when $(u_0, n_{\pm 0}) \in B_{\de }(H^k(\R^d) \cross \dot{H}^l(\R^d)) \ (\text{resp. } B_{\de }(H^k(\R^d) \cross H^l(\R^d)))$.
\end{rem}
\begin{prop}(scattering)\label{main_prop2}
Let $(u(t), n_{\pm}(t))$ be the solution to \eqref{integral_eq} with $(u_0, n_{\pm 0}) \in B_{\de }(H^k(\R^d) \cross \dot{H}^l(\R^d))$ on $(-\infty,\infty)$
obtained by Proposition \ref{main_prop1}, Remark \ref{time_rev}
and Remark \ref{global_sol}.
Then, there exist $(u_{+\I}, n_{\pm, +\I})$ and $(u_{-\I}, n_{\pm, -\I})$
in $H^k(\R^d)\times \dot{H}^l(\R^d)$ such that
\EQQ{
\|u(t)-S(t)u_{+\I}\|_{H^k}+\|n_{\pm}(t)-W_\pm(t)n_{\pm, +\I}\|_{\dot{H}^l}\to 0
}
as $t \to \I$ and
\EQQ{
\|u(t)-S(t)u_{-\I}\|_{H^k}+\|n_{\pm}(t)-W_\pm(t)n_{\pm, -\I}\|_{\dot{H}^l}\to 0
}
as $t \to -\I$.
The similar result holds for the inhomogeneous case.
\end{prop}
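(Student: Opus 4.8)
The plan is to read off the scattering states from the $U^2$/$V^2$ structure of the solution spaces: every element of $V^2$ — hence of $U^2\subset V^2$ — admits, frequency by frequency, limits in $L^2_x$ as $t\to\pm\I$, and collecting these Littlewood--Paley limits yields the asymptotic data. First I would upgrade the global solution $(u,n_{\pm})$ of Proposition \ref{main_prop1}, Remark \ref{time_rev} and Remark \ref{global_sol} to an element of $X^k_S(\R)\cross\dot{Z}^l_{W_{\pm}}(\R)$ (resp.~$X^k_S(\R)\cross Z^l_{W_{\pm}}(\R)$): since the bilinear estimates of Proposition \ref{B.E.homo} and Corollary \ref{B.E.S.W.} have constants independent of $T$ and $I_{T,S}\to I_{\I,S}$, $I_{T,W_{\pm}}\to I_{\I,W_{\pm}}$ pointwise as $T\to\I$ (the truncation $\1_{[0,T]}$ becoming $\1_{[0,\I)}$), the untruncated Duhamel terms are again bounded in these spaces, and the global solution satisfies \eqref{integral_eq} with $T=\I$. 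In particular $u-S(\cdot)u_0=I_{\I,S}(n_+,u)+I_{\I,S}(n_-,u)\in X^k_S(\R)$ and $n_{\pm}-W_{\pm}(\cdot)n_{\pm 0}=I_{\I,W_{\pm}}(u,u)\in\dot{Z}^l_{W_{\pm}}(\R)$.

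Next I would prove a limit lemma: for every $w\in X^k_S(\R)$ the limits $\lim_{t\to\pm\I}S(-t)w(t)$ exist in $H^k(\R^d)$, and for every $m\in\dot{Z}^l_{W_{\pm}}(\R)$ (resp.~$Z^l_{W_{\pm}}(\R)$) the limits $\lim_{t\to\pm\I}W_{\pm}(-t)m(t)$ exist in $\dot{H}^l(\R^d)$ (resp.~$H^l(\R^d)$). For each dyadic $N$ one has $S(-\cdot)P_Nw\in V^2$, so by Definition \ref{def_of_V} the limits $g_N^{\pm}:=\lim_{t\to\pm\I}S(-t)P_Nw(t)$ exist in $L^2_x$, and $\|g_N^{\pm}\|_{L^2_x}\lec\|P_Nw\|_{V^2_S}$ by the embedding $V^2_S\subset L^{\I}_tL^2_x$ (cf.~Remark \ref{embed}); hence $\sum_N N^{2k}\|g_N^{\pm}\|_{L^2_x}^2\lec\|w\|_{Y^k_S}^2<\I$ (the low-frequency piece $P_{<1}w$ being treated the same way), so there is $w_{\pm\I}\in H^k$ with $P_Nw_{\pm\I}=g_N^{\pm}$. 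Then $\|S(-t)w(t)-w_{\pm\I}\|_{H^k}^2\sim\sum_N N^{2k}\|S(-t)P_Nw(t)-g_N^{\pm}\|_{L^2_x}^2$, where each summand tends to $0$ as $t\to\pm\I$ and is majorised by the summable sequence $CN^{2k}\|P_Nw\|_{V^2_S}^2$, so dominated convergence gives $\|S(-t)w(t)-w_{\pm\I}\|_{H^k}\to0$. The wave case is identical, using $U^2_{W_{\pm}}\subset V^2_{-,rc,W_{\pm}}$ (Proposition \ref{embedding}$(iii)$), so that $W_{\pm}(-\cdot)P_Nm\in V^2$ again has limits at $\pm\I$.

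Applying the lemma with $w=u-S(\cdot)u_0$ gives $w_{\pm\I}\in H^k$, and setting $u_{\pm\I}:=u_0+w_{\pm\I}$ we get $\|u(t)-S(t)u_{\pm\I}\|_{H^k}=\|S(-t)u(t)-u_{\pm\I}\|_{H^k}\to0$ as $t\to\pm\I$. Likewise, applying it with $m=n_{\pm}-W_{\pm}(\cdot)n_{\pm 0}$ produces $n_{\pm,\pm\I}\in\dot{H}^l$ with $\|n_{\pm}(t)-W_{\pm}(t)n_{\pm,\pm\I}\|_{\dot{H}^l}\to0$; the inhomogeneous statement follows the same way with $Z^l_{W_{\pm}}$ and $H^l$ in place of $\dot{Z}^l_{W_{\pm}}$ and $\dot{H}^l$. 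Since, as recalled after Theorem \ref{SDGW}, $(n_+,n_-)\in\dot{H}^l\cross\dot{H}^l$ (resp.~$H^l\cross H^l$) is equivalent to $(n,\p_tn)\in\dot{H}^l\cross\dot{H}^{l-1}$ (resp.~$H^l\cross\om H^l$), this also yields the stated scattering for $(n,\p_tn)$, and Remark \ref{time_rev} supplies the $t\to-\I$ half.

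The point I expect to need the most care is not the asymptotics themselves, which are soft once the limit lemma is available, but the first step: showing that the $T\to\I$ limit of the Duhamel operators remains bounded in $X^k_S$ and $\dot{Z}^l_{W_{\pm}}$. The $V^2$/$U^2$ norms register the jump created by the time cutoff $\1_{[0,T]}$, so one cannot simply bound $I_{T_2}-I_{T_1}$ by the bilinear estimate applied to $\1_{[T_1,T_2]}n$; instead one should exploit the $T$-uniformity of Proposition \ref{B.E.homo} together with pointwise convergence and lower semicontinuity of the norms (or argue through an approximation as in Proposition \ref{unique}). Once that is in hand, the limit lemma and the dominated-convergence argument over the Littlewood--Paley pieces complete the proof.
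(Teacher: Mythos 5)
Your proposal is correct, and its engine is the same as the paper's: the $T$-independence of the contraction radius $r$ (plus uniqueness) gives $\|u\|_{X^k_S([0,T])}+\|n_\pm\|_{Y^l_{W_\pm}([0,T])}<M$ uniformly in $T$, and the $V^2$-type two-variation structure then forces $S(-t)u(t)$, $W_\pm(-t)n_\pm(t)$ to have limits at $\pm\infty$. The difference is in execution. The paper never globalizes: it takes an arbitrary partition $\{t_j\}_{j=0}^K\in\mathcal{Z}_0$ with $t_K<\infty$, encloses it in $[-T,T]$, applies Lemma \ref{estX} together with the $T$-uniform bound to get $\sum_j\|\LR{\nabla_x}^k(S(-t_j)u(t_j)-S(-t_{j-1})u(t_{j-1}))\|_{L^2}^2\lesssim M^2$, and invokes the modified Proposition \ref{embedding} $(i)$ (stated precisely for partitions with finite last point) to obtain $f_\pm=\lim_{t\to\pm\infty}\LR{\nabla_x}^kS(-t)u(t)$ in $L^2$, then sets $u_{\pm\infty}=\LR{\nabla_x}^{-k}f_\pm$; the wave part is identical. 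Thus the step you rightly flag as the most delicate --- upgrading to $u-S(\cdot)u_0\in X^k_S(\R)$ via $I_{\infty,S}$ --- is simply bypassed; your version of it does work along the lines you indicate ($T$-uniform bilinear bounds, pointwise agreement of $I_{T,S}$ with the untruncated integral for $t\le T$, lower semicontinuity of the variation sums, then Proposition \ref{embedding} $(i)$ to recover the limits required for $V^2$-membership), but it buys nothing extra here. Two minor points: your concern about ``the jump created by the cutoff'' is not quite the issue, since $S(-t)I_{T,S}(n,v)(t)$ is constant for $t\ge T$ (no jump); the real obstruction to a Cauchy-in-$T$ argument is that $\|\1_{[T_1,T_2]}n\|_{\dot Y^l_{W_\pm}}$ does not become small, which your lower-semicontinuity fix indeed circumvents. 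Also, one can apply the limit argument to $u$ itself rather than to $u-S(\cdot)u_0$, since $S(-t)S(t)u_0=u_0$ is constant; and your frequency-by-frequency limits summed by dominated convergence are a harmless variant of the paper's one-shot use of Lemma \ref{estX}.
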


\begin{proof}[Proof of Proposition \ref{main_prop1}]
We will show only the case $(u_0, n_{\pm 0}) \in B_{\de }(H^k(\R^d) \cross H^l(\R^d))$ because the proof of the case $(u_0, n_{\pm 0})  \in B_{\de }(H^k(\R^d) \cross \dot{H}^l(\R^d))$ follows from the same argument if we use \eqref{B.E.W.} instead of Corollary \ref{B.E.S.W.}.

First, we prove $(i)$.
We denote $I := [0, T]$ and
\EQQ{
\chi(t):=
\begin{cases}
0 \ \ \text{for} \ \ t<-1,\\
t+1 \ \text{for} \ \ -1 \le t \le 0,\\
1 \ \ \text{for} \ \ 0<t.
\end{cases}
}
By Proposition \ref{Strichartz} and the definition of $X_S^k, Y_{W_\pm}^l$,
it follows that
$\chi(t) S(t)u_0\in X^k_S$, $\chi(t)W_{\pm}(t)n_{\pm 0} \in Y^l_{W_{\pm}}$
and there exists $C>0$ such that     
\EQQ{
 \|\chi(t)S(t)u_0\|_{X^k_S} \le C\|u_0\|_{H^k}, \
 \|\chi(t)W_{\pm}(t)n_{\pm 0}\|_{{Y}^l_{W_{\pm}}} \le C\|n_{\pm 0}\|_{{H}^l}.
}
Since $\chi(t)=1$ on $I$, we obtain
\EQQ{
&S(t)u_0\in X^k_S(I), \ \ W_{\pm}(t)n_{\pm 0} \in Y^l_{W_{\pm}}(I),\\
&\|S(t)u_0\|_{X^k_S(I)} \le C\|u_0\|_{H^k}, \ \
 \|W_{\pm}(t)n_{\pm 0}\|_{{Y}^l_{W_{\pm}}(I)} \le C\|n_{\pm 0}\|_{{H}^l}.
}
Assume that $(u_0, n_{\pm 0}) \in B_{\de}(H^k(\R^d)\times {H}^l(\R^d)),\ 
(u, n_{\pm}) \in B_r(X^k_S(I)\cross {Y}^l_{W_{\pm}}(I))$. 
Then, by Proposition \ref{B.E.homo}, Corollary \ref{B.E.S.W.} and $\|\cdot\|_{Y^l_{W_{\pm}}} \lec \|\cdot\|_{Z^l_{W_{\pm}}}$, we have  
\EQQS{
&\|\Phi_1(u, n_{\pm})\|_{X^k_S}=\|\Phi_1(u^*, n^*_{\pm})\|_{X^k_S}
  \le C\|u_0\|_{H^k}+C\|n^*_{\pm}\|_{Y^l_{W_{\pm}}}\|u^*\|_{X^k_S}\\
&\|\Phi_{2\pm}(u)\|_{Y^l_{W_{\pm}}}=\|\Phi_{2\pm}(u^*)\|_{Y^l_{W_{\pm}}}
  \le C\|n_{\pm 0}\|_{H^l}+C\|u^*\|^2_{X^k_S}
}
for any $u^*\in X^k_S, n^*_{\pm}\in Y^l_{W_{\pm}}$ satisfying 
$u^*(t)=u(t), n^*_{\pm}(t)=n_{\pm}(t)$ on $I$.
Therefore,
\EQQS{
&\Phi_1(u, n_{\pm}) \in X^k_S(I), \ \ \ \Phi_{2\pm}(u) \in Y^l_{W_{\pm}}(I),\\ 
&\|\Phi_1(u, n_{\pm})\|_{X^k_S(I)}
  \le C\|u_0\|_{H^k}+C\|n_{\pm}\|_{Y^l_{W_{\pm}}(I)}\|u\|_{X^k_S(I)} 
  \le C\de + Cr^2,\\
&\|\Phi_{2\pm}(u)\|_{Y^l_{W_{\pm}}(I)}
  \le C\|n_{\pm 0}\|_{H^l}+C\|u\|^2_{X^k_S(I)} 
  \le C\de +Cr^2.
}
We choose $\de =r^2,\ r=1/4C,$ then we have   
\EQQS{
\|\Phi_1(u, n_{\pm})\|_{X^k_S(I)}+ \|\Phi_{2\pm}(u)\|_{Y^l_{W_{\pm}}(I)} \le r.
}
Hence, $(\Phi_1,\Phi_{2\pm})$ is a map from $B_r( X^k_S([0, T ]) \cross {Y}^l_{W_{\pm}}([0, T ]))$ into itself. 
Note that $r$ does not depend on $T$.
Moreover, we assume $(v, m_{\pm})\in B_r(X^k_S(I) \cross {Y}^l_{W_{\pm}}(I)),$ then 
\EQS{
  &\|\Phi_1(u, n_{\pm})-\Phi_1(v, m_{\pm})\|_{X^k_S(I)} \notag \\
  &=\|I_{T,S}(n_{\pm}, u)(t)-I_{T,S}(m_{\pm}, v)(t)\|_{X^k_S(I)} \notag \\
  &\le \|I_{T,S}(n_{\pm}, u-v)\|_{X^k_S(I)}+\|I_{T,S}(n_{\pm}-m_{\pm}, v)\|_{X^k_S(I)} \notag \\
  &\le C(\|n_{\pm}\|_{{Y}^l_{W_{\pm}}(I)}\|u-v\|_{X^k_S(I)}+\|n_{\pm}-m_{\pm}\|_{{Y}^l_{W_{\pm}}(I)}\|v\|_{X^k_S(I)}) \label{con1}\\
  &\le (1/4)(\|u-v\|_{X^k_S(I)}+\|n_{\pm}-m_{\pm}\|_{{Y}^l_{W_{\pm}}(I)}), \notag \\
 &\|\Phi_{2\pm}(u)-\Phi_{2\pm}(v)\|_{{Y}^l_{W_{\pm}}(I)} \notag \\
  &=\|I_{T,W_{\pm}}(u, u)(t)-I_{T,W_{\pm}}(v, v)(t)\|_{{Y}^l_{W_{\pm}}(I)} \notag \\
  &\le C(\|u\|_{X^k_S(I)}+\|v\|_{X^k_S(I)})\|u-v\|_{X^k_S(I)} \label{con2} \\
  &\le (1/2)\|u-v\|_{X^k_S(I)}. \notag 
}
Therefore, $(\Phi_1, \Phi_{2\pm})$ is a contraction mapping on $B_r( X^k_S([0, T ]) \cross {Y}^l_{W_{\pm}}([0, T ]))$. 
Thus, by the Banach fixed point theorem, we have a solution to \eqref{integral_eq} in it.

Next, we prove $(ii)$ by contradiction. 
Let $(u, n_{\pm}),\ (v, m_{\pm}) \in X^k_S([0,T]) \cross {Y}^l_{W_{\pm}}([0,T])$ are two solutions satisfying 
$(u(0), n_{\pm}(0))=(v(0), m_{\pm}(0))$. 
Assume that
\EQQS{
 T' := \sup \{0 \le t < T \, ; u(t)=v(t), n_{\pm}(t)=m_{\pm}(t) \} <T. 
}
By a translation in $t$, it suffices to consider the case $T'=0$. 
Let $0 < \ta \le T$. From \eqref{con1} and Proposition \ref{unique}, we obtain 
\EQQS{
 \|u-v\|_{X^k_S([0,\ta ])} 
 &\le C(\|n_{\pm}\|_{Y^l_{W_{\pm}}([0, \ta ])}\|u-v\|_{X^k_S([0, \ta ])}
          +\|n_{\pm}-m_{\pm}\|_{Y^l_{W_{\pm}}([0, \ta ])}\|v\|_{X^k_S([0, \ta ])}) \\
 &\le (1/4)\big(\|u-v\|_{X^k_S([0, \ta ])} + \|n_{\pm}-m_{\pm}\|_{Y^l_{W_{\pm}}([0, \ta ])}\big).
}
Here, we took sufficiently small $\tau$.
Hence, we obtain 
\EQS{ \label{uniq1}
 \|u-v\|_{X^k_S([0,\ta ])} \le (1/3)\|n_{\pm}-m_{\pm}\|_{Y^l_{W_{\pm}}([0, \ta ])}. 
}
Similarly, by \eqref{con2} and Proposition \ref{unique}, we obtain 
\EQS{
 \|n_{\pm}-m_{\pm}\|_{Y^l_{W_{\pm}}([0, \ta ])} 
 &\le C(\|u\|_{X^k_S([0, \ta ])}+\|v\|_{X^k_S([0, \ta ])})\|u-v\|_{X^k_S([0, \ta ])} \notag \\
 &\le (1/2)\|u-v\|_{X^k_S([0, \ta ])}.   \label{uniq2}
}
Here, we took sufficiently small $\tau$.
Hence, from \eqref{uniq1} and \eqref{uniq2}, we obtain
$u(t)=v(t), n_{\pm}(t)=m_{\pm}(t)$ on $[0,\tau]$, which contradicts to the definition of $T'$.

We omit the proof of $(iii)$ because it follows from the standard argument.
Finally, we prove $(iv)$. Fix $0<T<\infty$. Since $\LR{\x}^a \le C(a)(\LR{\x-\x_1}^a+\LR{\x_1}^a)$, we easily have   
\EQS{
 &\|I_{T,\, S}(n_\pm,u)\|_{X^{k+a}_S} \le C(a) \big( \|n_\pm\|_{Y^{l+a}_{W_{\pm}}}\|u\|_{X^k_S} + \|n_\pm\|_{Y^l_{W_{\pm}}} \|u\|_{X^{k+a}_S}\big), \label{per1}\\
 &\|I_{T,W_{\pm}}(u,u)\|_{Z^{l+a}_{W_{\pm}}} \le C(a) \|u\|_{X^{k+a}_S} \|u\|_{X^k_S}, \label{per2}
}
from Proposition \ref{B.E.homo} and Corollary \ref{B.E.S.W.}. Thus, by a similar argument as $(i)$, we obtain
\EQQ{
&\|u\|_{X^{k+a}_S(I)}\le C\|u_0\|_{H^{k+a}}+C(a) r(\|u\|_{X^{k+a}_S(I)}+\|n_+\|_{Y^{l+a}_{W_+}(I)}+\|n_-\|_{Y^{l+a}_{W_-}(I)}),\\
&\|n_{\pm}\|_{Y^{l+a}_{W_\pm}(I)}\le C\|n_{\pm 0}\|_{H^{l+a}}+C(a) r\|u\|_{X^{k+a}_S}
}
for the solution to \eqref{integral_eq} such that $(u,n_{\pm})\in B_r(X^{k}_S(I)\cross Y^{l}_{W_\pm}(I))$ with $r:=1/4C(a)$ and $\de:=r^2$.
Thus, we conclude
$$\|u\|_{X^{k+a}_S(I)}+\|n_{\pm}\|_{Y^{l+a}_{W_\pm}(I)} \le C(\|u_0\|_{H^{k+a}}+\|n_{\pm 0}\|_{H^{l+a}}).$$
\end{proof}

Finally, we prove Proposition 4.2. 

\begin{proof}
Since $r$ in the proof of Proposition \ref{main_prop1} does not depend on $T$,
it follows that
\EQQ{
&\|u\|_{X_S^k([0,T])}+\|n_\pm\|_{Y^l_{W_\pm}([0,T])} <M,\\
&\|u\|_{X_S^k([-T,0])}+\|n_\pm\|_{Y^l_{W_\pm}([-T,0])} <M\\
}
for any $T>0$, where the constant $M$ does not depend  on $T$.
For any $\{t_j\}_{j=0}^K\in \mathcal{Z}_0$ with $t_K<\infty$, we can take $0<T<\infty$ such that $-T<t_0$ and $t_K<T$. Then, by Lemma \ref{estX}, we have
\EQQ{
&\Big(\sum_{j=1}^K \|\LR{\nabla_x}^k(S(-t_j)u(t_j)-S(-t_{j-1})u(t_{j-1}))\|_{L^2}^2\Big)^{1/2}\\
\lec& \|\LR{\nabla_x}^k u \|_{V^2_S([0,T])}+\|\LR{\nabla_x}^k u \|_{V^2_S([-T,0])}\\
\lec &\|u\|_{X_S^k([0,T])}+\|u\|_{X_S^k([-T,0])}<2M.
}
Therefore, we have
\EQQ{
\sup_{\{t_j\}_{j=0}^K\in \mathcal{Z}_0}\Big(\sum_{j=1}^K \|\LR{\nabla_x}^kS(-t_j)u(t_j)-\LR{\nabla_x}^kS(-t_{j-1})u(t_{j-1})\|_{L^2}^2\Big)^{1/2} \lec M.
}
By Proposition \ref{embedding}, $f_{\pm}:=\lim_{t\to \pm \infty} \LR{\nabla_x}^kS(-t)u(t)$ exists in $L^2$. Put $u_{\pm \infty}:=\LR{\nabla_x}^{-k}f_{\pm}$. Then, we conclude
$$\|\LR{\nabla_x}^kS(-t)u(t) -f_{\pm} \|_{L^2}=\|u(t)-S(t)u_{\pm \infty}\|_{H^k}\to 0 \ \ \ \ \ \text{as} \ \ t\to \pm\infty.$$
Similarly, we obtain the scattering result for the wave equation. 
\end{proof}

\end{document}